\numberwithin{equation}{subsection}
\theoremstyle{plain}
\newtheorem*{theorem*}{Theorem}
\newtheorem{theorem}[equation]{Theorem}
\newtheorem{proposition}[equation]{Proposition}
\newtheorem{lemma}[equation]{Lemma}
\newtheorem*{corollary*}{Corollary}
\newtheorem*{conjecture*}{Conjecture}
\newtheorem{conjecture}[equation]{Conjecture}
\theoremstyle{definition}
\newtheorem{definition}[equation]{Definition}
\newtheorem*{definition*}{Definition}
\theoremstyle{remark}
\newtheorem{remark}[equation]{Remark}
\newenvironment{enumalph}
{\begin{enumerate}}
{\end{enumerate}}
\newcommand{\Z}{\mathbb{Z}}
\newcommand{\Q}{\mathbb{Q}}
\newcommand{\R}{\mathbb{R}}
\newcommand{\F}{\mathbb{F}}
\newcommand{\cH}{\mathbf{H}}
\newcommand{\I}{\mathbf I}
\newcommand{\Un}{F_{un}}
\newcommand{\calO}{\mathcal{O}}
\newcommand{\AlF}{F_{p}}
\newcommand{\AlK}{F_{p}}
\newcommand{\cO}{\calO}
\newcommand{\defi}[1]{\textsf{#1}} 	
\DeclareMathOperator{\Cl}{Cl}
\DeclareMathOperator{\primes}{primes}
\DeclareMathOperator{\Prob}{Prob}
\DeclareMathOperator{\Norm}{Norm}
\DeclareMathOperator{\Avg}{Avg}
\DeclareMathOperator{\Aut}{Aut}
\DeclareMathOperator{\Inertia}{Inertia}
\newcommand{\vect}{w} 
\newcommand{\FK}{F} 
\newcommand{\Spl}{S}
\newcommand{\can}{\mathrm{can}}
\DeclareMathOperator{\Disc}{\mathrm{Disc}}
\DeclareMathOperator{\rk}{\mathrm{rk}}
\DeclareMathOperator{\Sel}{\mathrm{Sel}}
\DeclareMathOperator{\sgn}{\mathrm{sgn}}
\DeclareMathOperator{\even}{\mathrm{even}}
\newcommand{\fraka}{\mathfrak{a}}
\newcommand{\fa}{\fraka}
\newcommand{\xmark}{\ding{55}}
\definecolor{myblue}{RGB}{25,25,200}
\begin{document}

\title[Heuristics for odd abelian fields]{On unit signatures and narrow class groups of odd abelian number fields: Galois structure and heuristics}

\author{Benjamin Breen}
\address{Department of Mathematics, Clemson University, 206 Long Hall, Clemson, SC 29631}
\email{bkbreen@clemson.edu}
\urladdr{}

\begin{abstract}
This paper is an extension of the work of Dummit and Voight on modeling the 2-Selmer group of number fields. We extend their model to $S_n$-number fields of even degree and develop heuristics on the difference in the 2-ranks between the class group and narrow class group.  
\end{abstract}

\title{The 2-Selmer Group of $S_n$-number fields of even degree}

\maketitle 
\tableofcontents

\section{Introduction}

\subsection{Motivation}

Their are several extensions of the Cohen-Lenstra-Martinet heuristics to the distributions of narrow class groups. However, these extensions are mainly for odd degree number fields, in particular abelian \cite{BVV} and $S_n$-number fields \cite{DV}. Predictions for narrow class groups of number fields of even degree are restricted to the case of real quadratic fields \cite{FouvryKluners, Stevenhagen}. 

In this paper, we provide heuristics on the relationship between the 2-torsion subgroup of the class group and the narrow class group for $S_n$-number fields of even degree. These heuristics stem from the 2-Selmer group of a number field $\FK$ \begin{equation*}\label{eqdef:2Sel} \Sel_2(\FK) \colonequals \{ \alpha \in \FK^{\times} : (\alpha) = \mathfrak{a}^2 \text{ for a fractional ideal } \mathfrak{a} \mbox{ of }\FK\} /(\FK^{\times})^2 \end{equation*} 
and a study of the image of $\Sel_2(F) \leq F^\times/F^{\times2}$ under all of the various global-to-local maps $F^\times/ F^{\times2} \to F_v^\times / F_v^{\times2}$ where $F_v$ is the completion of $F$ at an Archimedean or 2-adic place $v$. This can be neatly packaged into the 2-Selmer signature homomorphism 
\begin{equation*} \sgn:  \Sel_2(\FK)  \to V_\infty(\FK) \boxplus V_2(\FK),   \end{equation*} 
where $V_\infty(\FK)$ and $V_2(\FK)$ are local signature spaces constructed from the Archimedean and 2-adic algebras of $F$. We develop a model for the image $S \colonequals \sgn \Sel_2 (\FK)$ as $F$ varies over $S_n$-number fields of even degree, from which we derive precise predictions on the difference between the 2-torsion subgroups of the class group and narrow class group.

\subsection{Results}

 The difficulty in developing heuristics on 2-torsion for number fields of even degree is \emph{genus theory}: the contribution to the class field theory of a number field coming from the class field theory of its proper subfields. For an extension of number fields $\FK/E$, let the $E$-\defi{imprimitive part} of the 2-Selmer group of $\FK$ be \begin{equation*} \Sel_2^{E}(\FK) \colonequals \Sel_2(\FK) \cap E^\times \FK^{\times2} / (\FK^{\times})^2, \end{equation*} consisting of those classes with a representative $\alpha \in E^\times$. 
 

 We say that a number field is \defi{primitive} if it has no nontrivial subfields. The effects of genus theory for a primitive number field are restricted to the $\Q$-imprimitive part of the 2-Selmer group. We now provide a full classification for the image of the $\Q$-imprimitive part of the Selmer group  $S^\Q \colonequals \sgn \Sel_2^{\Q}(\FK)$. 
 
\begin{theorem}[Theorem \ref{thm::Q-types}] \label{thm::into1} Let $F$ be a number field of degree $n$.  
\begin{enumalph} \item When $n$ is odd, there is $1$ possibility for $S^\Q$. \item When $n$ is even, there are $6$ possibilities for $S^\Q$ up to isometry. \end{enumalph} \end{theorem}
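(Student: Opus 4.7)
The plan is to give an explicit description of $\Sel_2^\Q(F)$ in terms of the ramification of rational primes in $F$, compute the images of its generators under $\sgn$, and enumerate the isometry classes that arise. First I would show every class in $\Sel_2^\Q(F)$ has a squarefree rational representative $q=\pm\prod p_i$: unpacking $(q)=\mathfrak{a}^2$ prime by prime, one needs $v_p(q)\cdot e_{\mathfrak{P}/p}$ even for every $\mathfrak{P}\mid p$. Writing $T(F)$ for the set of rational primes $p$ all of whose primes $\mathfrak{P}$ above in $F$ have even ramification index $e_{\mathfrak{P}/p}$, this shows $\Sel_2^\Q(F)$ is generated modulo $(F^\times)^2$ by $\overline{-1}$ together with $\{\overline{p}:p\in T(F)\}$, subject to relations from $\Q^\times\cap (F^\times)^2$ (arising from quadratic subfields of $F$).

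For (a), when $n$ is odd the identity $\sum_{\mathfrak{P}\mid p}e_{\mathfrak{P}/p}f_{\mathfrak{P}/p}=n$ forces some odd $e_{\mathfrak{P}/p}$ above every prime, so $T(F)=\emptyset$; moreover $i\notin F$ (else $\Q(i)\subset F$ would force $2\mid n$), so $\Sel_2^\Q(F)=\langle\overline{-1}\rangle\cong\Z/2$. Since $r_1(F)\geq 1$ when $n$ is odd, $\sgn(\overline{-1})$ has the all-ones vector as its $V_\infty$-component; the natural isometry group of $V_\infty(F)\boxplus V_2(F)$ acts transitively on vectors of this combined shape, yielding exactly one orbit.

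For (b), with $n$ even, both $\overline{-1}$ and primes in $T(F)$ may contribute. I would organize a case analysis indexed by the position of $\sgn(\overline{-1})$ (trivial, in $V_\infty$ only, in $V_2$ only, or in both) together with the $\F_2$-rank and quadratic-form type of the image of $T(F)$ in $V_2$. Constraints imposed by the product formula for Hilbert symbols rule out \emph{a priori} possible configurations, leaving exactly six isometry types. The main obstacle will be this enumeration: one must both prune using reciprocity/product-formula constraints among the rational generators, and exhibit explicit fields realizing each of the six types --- plausibly quadratic, biquadratic, and well-chosen quartic fields --- so as to establish tightness in both directions.
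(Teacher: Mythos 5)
Your setup matches the paper's: Proposition \ref{prop:reps} is exactly your reduction to squarefree rational representatives (the condition ``$(p)=\fraka^2$'' is your ``all $e_{\mathfrak{P}/p}$ even''), and your part (a) is the same argument the paper uses implicitly (odd degree kills $T(F)$, leaving $S^\Q=\langle\sgn(-1)\rangle$ with $\sgn(-1)=(\vect_{\can},\vect_{\can})$ nonzero). But in part (b) there is a genuine gap at the decisive step. You propose to enumerate by ``the $\F_2$-rank and quadratic-form type of the image of $T(F)$ in $V_2$'' and to prune using the product formula for Hilbert symbols. The product formula only gives you that $S^\Q$ is totally isotropic (this is Theorem \ref{Thm:MaximalIsotropic}); a totally isotropic subspace of $V_2(\FK)$ can have dimension up to $n/2$, so reciprocity alone does not bound the rank of the image of $T(F)$, and your enumeration would not close at six types. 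The fact that actually does the work --- and which your proposal never identifies --- is purely local at $2$: since $\cO_v^\times/(1+4\cO_v)\cO_v^{\times2}$ receives $\Z_2^\times$ through $(\Z/4\Z)^\times$, the image $\sgn_2(p)$ of any \emph{odd} rational prime depends only on $p\bmod 4$ and equals $0$ (if $p\equiv 1$) or $\sgn_2(-1)=\vect_{\can}$ (if $p\equiv 3$), while $\sgn_\infty(p)=0$ because $p>0$. Hence all the odd primes in $T(F)$ together contribute at most the single vector $(0,\vect_{\can})$, and only the prime $2$ can contribute one further, possibly noncanonical, vector $(0,\vect)$. This is Proposition \ref{thm:Imageoffamilies}, and it is what collapses the a priori unbounded enumeration to the six subspaces generated by subsets of $\{(\vect_{\can},\vect_{\can})\ \text{or}\ (\vect_{\can},0),\ (0,\vect_{\can}),\ (0,\vect)\}$ according to whether $V_2$ is alternating.

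Two smaller points. First, your four-way split on ``the position of $\sgn(-1)$'' over-counts: $\sgn_\infty(-1)$ and $\sgn_2(-1)$ are the canonical vectors of their spaces, so the only dichotomy is whether $\sgn_2(-1)=0$, i.e.\ whether $V_2$ is alternating (the paper's A versus B). Second, you are right that asserting ``there are $6$ possibilities'' as a sharp count requires realizing all six; the paper does not do this inside the proof of Theorem \ref{thm::Q-types} but establishes it separately through the explicit positive densities of Theorem \ref{thm::S4fields}, so your instinct to exhibit witness fields is sound, just not where the paper places that burden.
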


We label the 6 possibilities in Theorem \ref{thm::into1}(b) as $\Q$-imprimitive types A(i)-(ii) and B(i)-(iv). For all of these types to occur, it is necessary, but not sufficient that the fields has at least one real place and so we will mainly focus on fields that are not totally complex. 

We investigated the proportion of $S_n$-number fields with each $\Q$-imprimitive type.  When real quadratic fields are ordered by discriminant, then 1/6 of have $\Q$-imprimitive type A(ii) and 5/6 have $\Q$-imprimitive type B(iii).  We prove the following result for $S_4$-number fields. 

\begin{theorem} \label{thm::S4fields}  As $F$ varies over totally real $S_4$-number fields ordered by absolute discriminant, then 
\textnormal{\setlength{\tabcolsep}{10pt}
\def\arraystretch{1.5}
\begin{center}
\begin{tabular}{ c || c | c| c| c| c| c  } 
$\Q$-imprimitive types &  A(i) &  A(ii) &  B(i) & B(ii) & B(iii)  & B(iv) \\ \hline \hline 
Proportion of fields &  0.0018 & 0.0423 & 0.7280 & 0.0996 & 0.1138 & 0.0143  \\ 
\end{tabular}
\end{center}}
\end{theorem}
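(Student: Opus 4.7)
The plan is to compute, as a product of independent local densities, the proportion of totally real $S_4$-quartic fields $F$ (ordered by $|\disc F|$) giving rise to each of the six $\Q$-imprimitive types. Since the type is an isometry invariant of $S^{\Q} = \sgn\Sel_2^{\Q}(F) \subseteq V_\infty(F) \boxplus V_2(F)$, I would begin by identifying a small, explicit set of generators for $\Sel_2^{\Q}(F)$ and computing the image of each under $\sgn$. These generators are $-1$, together with those rational primes $p$ for which every $\mathfrak{p} \mid p$ in $F$ has even ramification index (so that $(p)$ is an ideal square). For a quartic $S_4$-field and an odd prime $p$ (so inertia is tame and cyclic), this even-ramification condition is equivalent to the inertia subgroup $I_p \subseteq S_4$ being generated by a double transposition of cycle type $(2,2)$ or by a $4$-cycle; at $p = 2$ wild ramification gives additional possibilities that must be handled separately.

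With the generators in hand, I would enumerate the finitely many possibilities for the étale $\Q_2$-algebra $F_2 \colonequals F\otimes_\Q \Q_2$ arising from $S_4$-quartic fields, using the classification of quartic étale $\Q_2$-algebras (for instance as tabulated in the LMFDB) filtered by the requirement that the associated local Galois representation lands in $S_4$. For each such $F_2$ I would write down $V_2(F_2)$ explicitly, record the image of $-1$, of $2$, and of representative unramified units in $V_2$, and then, for each parity pattern of (i) the number of odd generator primes and (ii) whether $2 \in \Sel_2^{\Q}(F)$, identify which of the six isometry types A(i)--B(iv) is realized. The archimedean side is constant: $F$ is totally real, so $V_\infty(F) \cong (\Z/2)^4$ and $\sgn_\infty(-1) = (-1,-1,-1,-1)$, whereas odd primes contribute trivially at infinity.

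The proportions themselves follow from a mass formula. Bhargava's parametrization of quartic rings by pairs of ternary quadratic forms, sieved to $S_4$-fields and ordered by $|\disc|$, gives the density of $S_4$-fields with prescribed completion at any finite set of primes as a product of local densities, and these local events are independent across $p$. The density of odd primes with $(2,2)$- or $(4)$-inertia decays like $O(1/p^2)$, so the distribution of the set of odd generator primes is governed by a convergent product of independent Bernoulli events; in particular, only its joint $V_2$-profile (and its parity) matters, and this joint profile can be summed in closed form, case-by-case for each $F_2$-class. Aggregating across $F_2$-classes and odd-prime profiles should produce the numerical proportions in the table.

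The main technical obstacle will be the careful local analysis at $p = 2$. The number of étale quartic $\Q_2$-algebras appearing here is substantial, each carries its own $2$-adic Bhargava mass, and for each one I must compute the image of the relevant generating classes of $F_2^\times/F_2^{\times 2}$ under the local pairing defining $V_2$. A secondary subtlety is that an odd generator prime $p$ may be a local square at some but not all places of $F$ above $2$, affecting the isometry type of $S^{\Q}$ even when its Archimedean contribution vanishes. I would organize the enumeration into a finite table indexed by $F_2$-isomorphism class and odd-prime $V_2$-profile, verify that the columns partition correctly into the six types A(i)--B(iv), and cross-check the final proportions by Monte-Carlo sampling from the Bhargava-predicted local distributions.
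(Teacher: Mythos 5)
Your proposal follows essentially the same route as the paper: identify the generators of $\Sel_2^{\Q}(F)$ as $-1$ together with the primes $p$ with $(p)=\fraka^2$ (Proposition \ref{prop:reps}), classify the resulting types by the images of $-1$, $2$, and the odd generator primes (Theorem \ref{thm::Q-types}), and then compute the density of each type as a product of independent Bhargava local masses, which is a theorem rather than a conjecture in degree $4$. Two remarks on execution. First, the paper does not enumerate quartic \'etale $\Q_2$-algebras one by one; it gets the $2$-adic masses in closed form by observing that the component fields $K$ with $K[x]/(x^2-d)$ unramified are exactly the totally ramified extensions of a fixed ramified quadratic (or biquadratic) base, so Serre's mass formula yields $c(m,2)/c(2m,2)\cdot 2^{-3m}$ etc.\ directly (Lemma \ref{lem::unramextension}, Theorem \ref{thm::unrammifiedalgebras}); your enumeration would reach the same numbers but with considerably more bookkeeping. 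Second, your ``secondary subtlety'' about an odd generator prime having different local behavior at the various places above $2$ does not actually arise: since $V_2(F)=\prod_{v\mid 2}\cO_v^\times/(1+4\cO_v)\cO_v^{\times 2}$, an odd prime $p\equiv 1\pmod 4$ lies in $1+4\cO_v$ at every $v\mid 2$ and so maps to $0$, while $p\equiv 3\pmod 4$ maps to exactly the canonical vector $\vect_{\can}=\sgn_2(-1)$; only the prime $2$ can contribute a noncanonical nonzero vector. This is precisely what keeps the classification down to six types and reduces the odd-prime contribution to the single Euler factor $b=\prod_{p\equiv 3\,(4)}\bigl(1-\tfrac{c(2,p)}{c(4,p)}p^{-2}\bigr)$.
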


When $n > 4$,  we provide a set of explicit predictions (Conjecture \ref{conj:fullclass}) on the proportion of fields with each $\Q$-imprimitive type based on Bhargava's conjectures on local algebras \cite{Bhar}.  These prediction indicate that as the degree grows large most $S_n$-number fields will have $\Q$-imprimitive type B(i). This $\Q$-imprimitive is the only one where which the class group and the narrow class group could be equal, hence a positive proportion (at least $27.19\%$) of totally real or mixed signature $S_4$-number fields have a narrow class group that is strictly larger than the class group.

\subsection{Conjectures} 

Let $\FK$ be a number field with class group $\Cl(\FK)$ and narrow class group $\Cl^+(\FK)$. For an abelian group $A$, let $A[2] \colonequals \{ a \in A : a^2 = 1\}$ be the 2-torsion subgroup of $A$. Dummit--Voight \cite{DV} studied the \defi{isotropy rank} \[ k(\FK) \colonequals \dim_{\F_2} \Cl^+(\FK)[2] - \dim_{\F_2}\Cl(\FK)[2] \] 
and developed heuristics for isotropy ranks of $S_n$-number fields of odd degree which stemmed from the assumption that the image $S \subseteq V_2(\FK) \boxplus V_\infty(\FK)$ behaved like a random maximal totally isotropic subspace. This was later extended to abelian number fields of odd degree by Breen--Varma--Voight \cite{BVV} who incorporated a Galois module structure into their model. 

We extend this model to $S_n$-number fields of even degree. To do this, we treat the image of the 2-Selmer group as random maximal totally isotropic subspace containing a subspace $S^\Q \subseteq S$ determined by the $\Q$-imprimitive type. This is done in generality in Theorem \ref{thm::isoprobs}; we now state the specific predictions for $\Q$-imprimitive type B(i). 
 Let
\begin{equation*} (q)_m \colonequals \prod_{i=1}^m (1-q^{-i}),\hspace{1cm} (q)_0 \colonequals 1 \end{equation*} be the $q$-Pochhammer symbol.

\begin{conjecture}[Conjecture \ref{conj::isotropyranks}] \label{thm::isoprobsinto} 
For $k \in \Z_{\geq0}$, let 
\begin{eqnarray*} C (k) & \colonequals & \frac{1}{2^{k(k+r_2)}} \cdot  \frac{ (2)_{r_1+r_2 -1}  (4)_{r_1/2-1} (4)_{r_1/2 - 1 + r_2 } }{  (2)_{k} (2)_{k + r_2}(4)_{r_1+r_2 -1} (4)_{r_1/2-1-k}} \end{eqnarray*}
As $F$ varies over $S_n$-number fields of even degree with signature $(r_1,r_2)$ and $\Q$-imprimitive type B(i) ordered by absolute discriminant, then 
\begin{eqnarray*} \Prob( F \: | \: k(F) = k ) & = &  \begin{cases} C(k) & k = 0; \\
C(k) + C(k-1)/2^{r_1+r_2-1}  & 0<k < r_1/2; \\
C(k-1)/2^{r_1+r_2-1}  & k = r_1/2. \end{cases} \end{eqnarray*}
\end{conjecture}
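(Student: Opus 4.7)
The plan is to derive the claimed distribution by specializing the general heuristic of Theorem \ref{thm::isoprobs} to $\Q$-imprimitive type B(i). Under the model, $S = \sgn \Sel_2(F)$ is drawn uniformly from the maximal totally isotropic subspaces of $V_\infty(F) \boxplus V_2(F)$ that contain the prescribed subspace $S^\Q$; the isotropy rank $k(F)$ can then be read off the geometry of $S$ as the drop in the rank of the projection $S \to V_\infty(F)$ relative to that of $S^\Q \to V_\infty(F)$, since it is precisely the Archimedean component of the signature map that controls the cokernel governing $\Cl^+(F)[2]/\Cl(F)[2]$.

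First, I would pin down the linear-algebraic invariants of the ambient quadratic space: $V_\infty(F)$ has $\F_2$-dimension $r_1$ with its natural hyperbolic form (using that $r_1$ is even), a maximal totally isotropic subspace of $V_\infty(F) \boxplus V_2(F)$ has dimension $r_1 + r_2$, and for type B(i) the subspace $S^\Q$ has the specific position and dimension determined in Theorem \ref{thm::Q-types}. Once these are fixed, the conditional distribution of $k(F)$ reduces to counting maximal totally isotropic subspaces $S \supseteq S^\Q$ with a prescribed projection rank to $V_\infty(F)$, divided by the total number of such $S$.

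Second, I would execute the count using standard Gaussian-binomial identities for isotropic Grassmannians over $\F_2$. The number of maximal totally isotropic subspaces of a nondegenerate quadratic space over $\F_2$ of prescribed Witt type and dimension $2m$ containing a fixed isotropic subspace of dimension $d$ is a ratio of $q$-Pochhammer symbols at $q = 2$ and $q = 4$, where the base-$4$ terms record the quadratic-form type (matching $r_1/2$ on $V_\infty(F)$ and $r_1/2 + r_2$ on the full space) and the base-$2$ terms record Grassmannian counts of the isotropic subspaces of dimensions $k$ and $k + r_2$. This directly yields $C(k)$. The additional term $C(k-1)/2^{r_1+r_2-1}$ accounts for an exceptional geometric configuration, occurring on a subset of relative density $2^{-(r_1+r_2-1)}$, in which a distinguished element of $S^\Q$ lies in $V_2(F)$ rather than projecting nontrivially to $V_\infty(F)$, which shifts $k(F)$ by one.

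The main obstacle I expect is pinning down the precise position of $S^\Q$ inside $V_\infty(F) \boxplus V_2(F)$ for type B(i), and correctly tracking the bifurcation into the two geometric configurations producing $C(k)$ and $C(k-1)/2^{r_1+r_2-1}$. In particular, the upper bound $k \leq r_1/2$ should arise because type B(i) forces a distinguished isotropic subspace of dimension $r_1/2$ inside the Archimedean projection of $S$, and verifying this dimension count is the crux. With that structural input in hand, the explicit distribution follows from the isotropic-Grassmannian count.
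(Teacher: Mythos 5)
Your overall strategy --- condition on $S^\Q \subseteq S$, count maximal totally isotropic subspaces with prescribed intersection with $V_\infty(F)$ via $q$-Pochhammer identities, and explain the two-term formula by a dichotomy of geometric configurations --- is the same as the paper's, which runs the count through Lemma \ref{lem::maxtotisowithk}(3) with $n = r_1/2-1$ and $m = r_2$. But two structural inputs are misstated, and they are precisely the inputs that make the count come out as claimed. First, $V_\infty(F)$ is \emph{not} hyperbolic: with the product of real Hilbert symbols it is the dot-product space $\I^{r_1} \simeq \I^2 \boxplus \cH^{r_1/2-1}$ (Lemma \ref{lem:bilspace}), a nonalternating space whose canonical vector $\sgn_\infty(-1)$ is nonzero and isotropic; likewise for the B types the full space $V_\infty(F)\boxplus V_2(F)$ is $\I^2\boxplus\cH^{r_1+r_2-1}$. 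If $V_\infty(F)$ were $\cH^{r_1/2}$, its canonical vector would vanish, the generator $(\vect_{\can},\vect_{\can})$ of $S^\Q$ would degenerate to $(0,\vect_{\can})$, and the count would yield a single-term formula (as for the A types) rather than $C(k)+C(k-1)/2^{r_1+r_2-1}$.

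Second, your account of the exceptional configuration is not correct. For type B(i), $S^\Q=\langle(\vect_{\can},\vect_{\can})\rangle$ is a fixed line whose generator has nontrivial components in both factors; no distinguished element of $S^\Q$ ``lies in $V_2(F)$.'' In fact the condition $S^\Q\subseteq S$ is automatic here (Remark \ref{rem::concan}: every maximal totally isotropic subspace of a nonalternating space contains the canonical vector), so the sample space is \emph{all} maximal totally isotropic subspaces. The genuine dichotomy is a property of $S$ itself: whether $S$ contains $(\vect_{\can},0)$ (equivalently, $(0,\vect_{\can})$). Subspaces containing it have $1\le k\le r_1/2$ and contribute the $C(k-1)/2^{r_1+r_2-1}$ term; those not containing it have $0\le k\le r_1/2-1$ and contribute $C(k)$, and their relative densities are $1/(1+2^{r_1+r_2-1})$ and $2^{r_1+r_2-1}/(1+2^{r_1+r_2-1})$, not $2^{-(r_1+r_2-1)}$ and its complement. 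Finally, the bound $k\le r_1/2$ holds for every type, simply because maximal isotropic subspaces of $\I^2\boxplus\cH^{r_1/2-1}$ have dimension $r_1/2$; it is not forced by anything special to B(i). Without these corrections the Grassmannian count you outline cannot be executed to produce the stated distribution.
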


Lastly, we consider moments. Malle \cite{Malle,Malle1} provides heuristics on the 2-part of the class group for $S_n$-number fields of odd degree, predicting that the average number of 2-torsion elements in $\Cl(\FK)$ is $1+2^{1-r_1-r_2}$.  Using Malle's predictions, Dummit--Voight \cite{DV} conjectured that the average number of 2-torsion elements in $\Cl^+(\FK)$ is $1+2^{-r_2}$.

 Currently, there are no predictions for the 2-part of the class group for $S_n$-number fields of even degree.  We investigated $S_n$-number fields for which the contribution from genus theory is as minimal as possible; namely $\Sel_2^\Q(\FK) = \{\pm1\}$. We say that these fields have \defi{trivial $\Q$-imprimitive type}. We propose the following conjecture for these fields.

\begin{conjecture} \label{cor::genusunobstructedtwotorsioneltsintro} Let $\FK$ vary over $S_n$-number fields with signature $(r_1,r_2)$ and trivial $\Q$-imprimitive type ordered by absolute discriminant. \begin{enumalph} \item The average number of 2-torsion elements in $\Cl(\FK)$ is $1+ 2^{1-r_1 - r_2}$.\item The average number of 2-torsion elements in $\Cl^+(\FK)$ is $ 1+ 2^{-r_2}$ when $r_1 > 0$. \end{enumalph} \end{conjecture}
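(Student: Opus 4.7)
The plan is to derive both averages as consequences of the random-maximal-isotropic-subspace model developed earlier in the paper, exploiting the fact that the trivial $\Q$-imprimitive type is precisely the regime in which the even-degree heuristic degenerates to the odd-degree model of Dummit--Voight and Malle, for which analogous averages are already conjectured (and in the narrow case, proven).

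The first step is to translate the sizes $|\Cl(\FK)[2]|$ and $|\Cl^+(\FK)[2]|$ into statements about the image $S = \sgn(\Sel_2(\FK)) \subseteq V_\infty(\FK) \boxplus V_2(\FK)$. The short exact sequence
\[ 1 \to \cO_\FK^\times/\cO_\FK^{\times 2} \to \Sel_2(\FK) \to \Cl(\FK)[2] \to 1, \]
together with the model's assumption that the Selmer signature map is injective, yields $|\Cl(\FK)[2]| = |S|/2^{r_1+r_2}$ for non-totally-complex $\FK$. Combining this with the unit signature exact sequence
\[ 1 \to \cO_{\FK,+}^\times/\cO_\FK^{\times 2} \to \cO_\FK^\times/\cO_\FK^{\times 2} \to \{\pm 1\}^{r_1} \to \Cl^+(\FK)/\Cl(\FK) \to 1 \]
(restricted to $2$-parts) produces an analogous formula for $|\Cl^+(\FK)[2]|$ in terms of $|S|$ and the archimedean projection of the unit image inside $S$.

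The second step is to invoke the model. For trivial $\Q$-imprimitive type, $S^\Q = \langle \sgn(-1)\rangle$ is the minimal subspace forced by global reciprocity, and $S$ is modeled as a uniformly random maximal totally isotropic subspace of $V_\infty(\FK)\boxplus V_2(\FK)$ constrained only to contain $\sgn(-1)$. This exactly matches the constraint in the Dummit--Voight odd-degree model, so the conditional distributions of $\dim\Cl(\FK)[2]$ and $\dim\Cl^+(\FK)[2]$ coincide with their odd-degree analogues. Evaluating $\sum_{k\geq 0} 2^k\,\Prob[\dim\Cl(\FK)[2]=k]$ via the standard $q$-Pochhammer counts of maximal isotropic subspaces of $\F_2$-orthogonal spaces recovers $1+2^{1-r_1-r_2}$, and the parallel computation for the narrow class group gives $1+2^{-r_2}$ when $r_1>0$, matching Malle's formula and the Dummit--Voight formula respectively.

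The main obstacle is justifying that conditioning on trivial $\Q$-imprimitive type imposes no further biases beyond $\sgn(-1)\in S$, especially on the $2$-adic projection of $S$; this requires tracking the Galois-module structure of Breen--Varma--Voight through the degeneration to the minimal $\Q$-imprimitive subspace, and is where I expect the most delicate bookkeeping. An alternative, more direct route would be to compute both moments in the general even-degree model restricted to trivial type and verify matching with the conjectured values term-by-term, using the $q$-Pochhammer identities already appearing in Conjecture~\ref{thm::isoprobsinto} to collapse the sums.
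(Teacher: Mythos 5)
There is a genuine gap. Your first step asserts that ``the model's assumption that the Selmer signature map is injective'' yields $|\Cl(\FK)[2]| = |S|/2^{r_1+r_2}$. The signature map is not injective, and the model does not assume it is: the relation $\rk_2\Cl(\FK) = \dim_{\F_2}\ker(\sgn)$ in \S\ref{Section::2} says precisely that the kernel of $\sgn$ computes $\Cl(\FK)[2]$. Since $\dim_{\F_2}\Sel_2(\FK) = (r_1+r_2) + \rk_2\Cl(\FK)$, the image $S$ always has dimension exactly $r_1+r_2$ (it is a maximal totally isotropic subspace of the $2(r_1+r_2)$-dimensional space $V(\FK)$), so $|S|$ carries no information about $\Cl(\FK)[2]$, and your formula would force $|\Cl(\FK)[2]|=1$ identically. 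What the random-maximal-isotropic-subspace model controls is only the isotropy rank $k(\FK)=\dim_{\F_2} S\cap V_\infty(\FK) = \rk_2\Cl^+(\FK)-\rk_2\Cl(\FK)$, i.e., the \emph{difference} of the two ranks. Consequently part (a) cannot be extracted from the $q$-Pochhammer counts of maximal isotropic subspaces as you propose; it requires a separate Cohen--Lenstra-type input. That is how the paper proceeds: part (a) is adopted as an extension of Malle's heuristic to fields of trivial $\Q$-imprimitive type (Conjecture \ref{conj::classgroupmom}, supported by computational data rather than derived from the subspace model), and part (b) is then obtained by combining that input with the isotropy-rank distribution from Theorem \ref{thm::isoprobs} under an additional assumption that $k(\FK)$ is independent of $\rk_2\Cl(\FK)$.

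Two further inaccuracies: conditioning on trivial $\Q$-imprimitive type, so that $S$ is constrained only to contain $\sgn(-1)$, does not make the even-degree model literally coincide with the odd-degree Dummit--Voight model --- the ambient spaces have different isometry types ($\I^2\boxplus\cH^{\bullet}$ versus $\I\boxplus\cH^{\bullet}$), and the resulting isotropy-rank distributions differ (compare type B(i) in Theorem \ref{thm::isoprobs} with the odd-degree formula), even though the final average $1+2^{-r_2}$ happens to agree. And neither average is ``proven'' in the narrow case for general $n$; both statements are conjectural.
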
 

While every $S_n$-number fields of odd degree has trivial $\Q$-imprimitive type, it is clear that no quadratic fields have trivial $\Q$-imprimitive type.  We investigated the proportion  of $S_n$-number fields with even degree with trivial $\Q$-imprimitive type.  Let \begin{equation*} c(n,p)  =  \sum_{k =0}^{n-1} q(k,n-k)p^{-k}  \end{equation*}  where $q(k,n-k)$ denotes the number of partitions of $k$ into at most $n-k$ parts.

\begin{conjecture} \label{conj::Qtriv}  As $\FK$ varies over $S_n$-number fields of degree $n = 2m$ ordered by absolute discriminant, the density of fields with trivial $\Q$-imprimitive type (independent of signature) is 
\begin{eqnarray*}  \prod_{\primes \: p}   \left (1 - \frac{c(m,p)}{c(2m,p)}  \cdot \frac{1}{p^{m}} \right ). \end{eqnarray*} 
    \end{conjecture}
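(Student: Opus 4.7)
The plan is to combine Bhargava's heuristic on local completions of $S_n$-number fields with a mass-formula identity for degree-$2m$ étale $\Q_p$-algebras having all even ramification indices. The first step is a local reformulation: for $n \geq 3$, $S_{n-1}$ is maximal in $S_n$, so every $S_n$-field $F$ is primitive and the map $\Q^\times/\Q^{\times 2} \hookrightarrow F^\times/F^{\times 2}$ is injective. Identifying $\Sel_2^\Q(F)$ with its image in $\Q^\times/\Q^{\times 2}$, an element $\alpha = \pm\prod_{p \in T} p$ (with $T$ squarefree) lies in $\Sel_2^\Q(F)$ precisely when, for each $p \in T$, the ideal $p\mathcal{O}_F$ is a square in $F$ --- equivalently, every ramification index $e_i$ of $F\otimes\Q_p$ is even. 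Trivial $\Q$-imprimitive type is therefore equivalent to the statement that no rational prime is \emph{even-ramified} in $F$.

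By Bhargava's conjecture \cite{Bhar}, the local completions of $S_n$-number fields ordered by $|\Disc|$ are asymptotically independent across primes, with each étale $\Q_p$-algebra $L$ of degree $n$ occurring with local density $\frac{1}{c(n,p)} \cdot \frac{1}{|\Aut(L)| \cdot |\Disc(L)|}$. Under this heuristic the sought density equals $\prod_p (1 - P_p)$, where $P_p$ is the conditional probability that $F \otimes \Q_p$ is even-ramified, so Conjecture \ref{conj::Qtriv} reduces to the local mass identity
\begin{equation*}
\sum_{\substack{L/\Q_p \text{ étale of deg } 2m \\ \text{all } e_i(L) \text{ even}}} \frac{1}{|\Aut(L)|\,|\Disc(L)|} \;=\; \frac{c(m,p)}{p^m}, \tag{$\star$}
\end{equation*}
which gives $P_p = c(m,p)/(p^m\, c(2m,p))$ directly.

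To prove $(\star)$ I would use the exponential relation $\sum_n c(n,p)\, t^n = \exp\bigl(\sum_d M_d(p)\, t^d\bigr)$, obtained by writing each étale algebra as an unordered multiset of field factors, where $M_d(p) = \sum_{K/\Q_p \text{ field},\, \deg K = d} 1/(|\Aut(K)|\,|\Disc(K)|)$. Since an algebra has all $e_i$ even iff each field factor does, the generating function restricted to even-ramified algebras exponentiates $\sum_d M_d^{\mathrm{even}}(p)\, t^d$, with $M_d^{\mathrm{even}} = 0$ for $d$ odd (automatic, since $e \mid \deg K$). Thus $(\star)$ is equivalent to $M_{2d}^{\mathrm{even}}(p) = M_d(p)/p^d$ for all $d \ge 1$. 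Decomposing $M_d = \sum_{ef = d} M_{e,f}$ by ramification invariants and applying Serre's mass formula for totally ramified extensions over $\Q_{p^f}$ gives, in the tame case $p \nmid d$, the closed form $M_{e,f} = 1/(f\, p^{f(e-1)})$. Parameterizing even $e = 2d/f$ with $f \mid d$ then yields $M_{2d}^{\mathrm{even}} = \sum_{f \mid d} 1/(f\, p^{2d-f})$, which matches $M_d/p^d = \sum_{f \mid d} 1/(f\, p^{d-f}) \cdot p^{-d}$ termwise.

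The main obstacle is the wild prime $p = 2$, where Serre's closed form for $M_{e,f}$ fails and one must invoke the full Serre-Bhargava mass formula with higher ramification filtrations; in particular the $c(n,2)$ appearing in the conjecture no longer coincides exactly with $\sum_L 1/(|\Aut L|\,|\Disc L|)$, so the identity $(\star)$ at $p=2$ must be checked against the specific generating-function definition of $c(n,p)$ rather than against the Bhargava mass. A cleaner uniform approach would reformulate $(\star)$ Galois-theoretically --- ``$e_L$ even'' iff $G_L \cap I_p$ lies in a distinguished index-$2$ open subgroup of the inertia group --- and verify $(\star)$ as a bijection-style identity on open subgroups of $G_{\Q_p}$ applicable at tame and wild primes alike. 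A secondary issue is the quadratic case $m = 1$, where $F$ is not primitive and $\Sel_2^\Q(F)$ requires separate analysis reducing essentially to Gauss's classical genus theory.
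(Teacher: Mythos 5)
Your proposal follows the same top-level strategy as the paper: reduce ``trivial $\Q$-imprimitive type'' to the local condition that no rational prime has all ramification indices even (Proposition \ref{prop:reps}), invoke Bhargava's independence-of-places heuristic to write the density as $\prod_p(1-P_p)$, and reduce everything to the local mass identity $(\star)$, which is exactly the computation underlying Conjecture \ref{conj::pinSel2}. Where you genuinely diverge is in the proof of $(\star)$: you propose an exponential/Euler-product decomposition over field factors together with Serre's mass formula applied to each pair $(e,f)$ of ramification and inertia invariants, matching $M_{2d}^{\mathrm{even}}(p)=M_d(p)/p^d$ termwise over divisors $f\mid d$. The paper instead works at the level of splitting symbols and Bhargava's partition-indexed mass formula (Theorem \ref{thm::Bharparts}): the bijections $\Phi\colon \Spl(2m)_{\even}\to\Spl(m)$ and $\Psi$ on the associated partitions shift the discriminant exponent by exactly $m$, and summing the masses $p^{-k}$ over partitions gives $c(m,p)p^{-m}$ in one stroke. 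Both routes are valid, and yours has the merit of being self-contained modulo Serre's formula; but your stated obstacle at $p=2$ is not actually an obstacle. Serre's mass formula $\sum_{[K]}|\Aut_F(K)|^{-1}q^{-d(K/F)}=q^{1-e}$ holds for wildly ramified $e$ as well (the individual discriminant exponents vary, but the mass does not), so your closed form $M_{e,f}=f^{-1}p^{-f(e-1)}$ and the termwise matching go through uniformly in $p$; likewise Bhargava's Theorem 1.1 asserts precisely that the total mass equals $c(n,p)$ for every prime, including $p=2$, so $(\star)$ needs no separate wild-prime treatment. Your remark on $m=1$ is consistent with the conjecture rather than an exception to it: for quadratic fields the Euler product diverges to $0$, matching the paper's observation that no quadratic field has trivial $\Q$-imprimitive type. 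The one point both treatments leave implicit is the passage from finitely many local conditions to the full infinite product, which rests on the ``acceptability'' hypothesis in Bhargava's Conjecture \ref{conj::mainconj} for $n>2$.
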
 
See \S\ref{subsec::trivialtypes} for a table of values of Conjecture \ref{conj::Qtriv} when $4\leq n \leq 20$.  In particular, Conjecture \ref{conj::Qtriv} predicts that as the degree $n$ grows large then approximately $100\%$ of $S_n$-number fields will have trivial $\Q$-imprimitive type. 

\subsection{Layout}

This paper is organized as follows. In $\S$\ref{Section::2}, we provide a short background on the 2-Selmer group of a number field and its relation to ray class fields. In $\S$\ref{Section::3}, we prove a classification of the $\Q$-imprimitive types of $S_n$-number fields of even degree. We then expand upon this in $\S$\ref{Section::4} by working out the conjectural density for each $\Q$-imprimitive types based on Bhargava's conjectures for local algebras of $S_n$-number fields \cite{Bhar}. In $\S$\ref{Section::5} we provide a count for the number of maximal totally isotropic subspaces with a fixed $\Q$-imprimitive subspace $S^\Q$. Lastly, in $\S$\ref{Section::6} we provide computational evidence for $S_4$-number fields to support our conjectures. 

\subsection{Acknowledgements}

The author was supported by the Simons Collaboration Grant (0550029 to John Voight) and RTG Grant DMS $\#1547399$. We thank John Voight for his comments and feedback in the preparation of this manuscript.

 \section{Background and notation} \label{Section::2} 

\subsection{Notation}

Let $\FK$ be a number field of degree $n \colonequals [\FK:\Q]$ with $r_1$ real places and $r_2$ complex places. For $v$ a place of $\FK$, and $\alpha \in \FK$, we abbreviate $\alpha_v \colonequals v(\alpha)$.  We say that $\FK$ is an \defi{$S_n$-number field} if its Galois closure has Galois group isomorphic to $S_n$. 
  
\subsection{The 2-Selmer group and its signature spaces} \label{sec:2selsigdefs}

\begin{definition} The \defi{$2$-Selmer group} of a number field $\FK$ is \begin{equation} \Sel_2(\FK) \colonequals \{ \alpha \in \FK^{\times} : (\alpha) = \fraka^2 \text{ for a fractional ideal } \fraka \mbox{ of }\FK\} /(\FK^{\times})^2. \end{equation} \end{definition}

Following Dummit and Voight  \cite{DV}, we recall two signature spaces that record the signs of elements in $\FK$ at the archimedean and 2-adic places. 

\begin{definition}\label{def:VooK} The \defi{archimedean signature space} $V_\infty(\FK)$ of $\FK$ is \begin{equation*} V_\infty(\FK) \colonequals \prod_{\text{$v$ real}} \FK_v^\times/ \FK_v^{\times2}  \simeq  \prod_{\text{$v$ real}} \{\pm 1\} \end{equation*} and the \defi{archimedean signature map} $\sgn_\infty \colon \FK^\times/(\FK^{\times})^2 \to V_\infty(\FK)$ sends a representative $\alpha \in \FK^\times$ to $\sgn(\alpha)$, the tuple recording the sign $\alpha_v/ |\alpha_v|$ at each real place $v$. We treat $V_\infty(\FK)$ as an $\F_2$-vector space of dimension $r_1$;  in the case that $r_1 = 0$ then $V_\infty(F) \colonequals \{0\}$ is the trivial $\F_2$-vector space. The product of Hilbert symbols over the real places defines a map \begin{equation} \begin{aligned} b_\infty \colon V_\infty(\FK) \times V_\infty(\FK) & \to \{\pm 1\}, \end{aligned} \end{equation} which is a nondegenerate, symmetric, bilinear form on $V_\infty(\FK)$ so long as $r_1>0$.   \end{definition}

Let $\alpha \FK^{\times2}$ be a representative for a square class in $\Sel_2(\FK)$. For a nonarchimedean place $v$, then $\alpha_v = \pi_v^{2n} u_v$ where $\pi_v$ is a choice of local uniformizer, $u_v \in \cO_v^\times$ is a unit in the local ring of integers $\cO_v$, and $n \in \Z$. Therefore, $\alpha$ is locally represented by the square class $u_v\FK_v^{\times2}$ in $\FK_v^\times/\FK_v^{\times2}$. 

\begin{definition}\label{def:V2} The \defi{2-adic signature space} $V_2(\FK)$ of $\FK$ is \begin{equation*} V_2(\FK) \colonequals \prod_{v\mid2}  \cO_v^\times / (1 + 4 \cO_{v}) \cO_{v}^{\times2} \end{equation*} and the \defi{$2$-adic signature map} is $\sgn_2 \colon \Sel_2(\FK) \to V_2(\FK)$ sends a representative $\alpha \FK^{\times2} \in \Sel_2(\FK)$ to the tuple of local units $(u_v)_{v \mid 2}$. The space $V_2(\FK)$ has the structure of an $\F_2$-vector space of dimension $n$. The product of Hilbert symbols over places above $2$ defines a map \begin{equation} \begin{aligned} b_2 \colon V_2(\FK) \times V_2(\FK) & \to \{\pm 1\},  \end{aligned}  \end{equation} which is a symmetric, nondegenerate, bilinear form on $V_2(\FK)$. \end{definition}
 
\begin{definition} The \defi{2-Selmer signature space} of $\FK$ is the orthogonal direct sum \begin{equation*} V(\FK)  \colonequals V_\infty(\FK) \boxplus V_2(\FK), \end{equation*} 
which comes with the \defi{$2$-Selmer signature map} $\sgn \colonequals (\sgn_{\infty}, \sgn_2) : \Sel_2(\FK) \to V(\FK)$ that fits into the  following diagram 
\[ \begin{tikzcd} & \Sel_2(\FK) \arrow[d, swap, "\sgn" ]   \arrow[dr, "\sgn_{2}"]  \arrow[dl, swap, "\sgn_{\infty}" ]  & \\
V_\infty(\FK)  \arrow[r]  & V_\infty(\FK) \boxplus V_2(\FK)   & V_2(\FK).   \arrow[l] 
\end{tikzcd} \]  \end{definition} 

The image of 2-Selmer group inside the 2-Selmer signature space $S(\FK) \colonequals\sgn(\Sel_2(\FK))$ plays a central role in this paper.  Dummit-Voight proved the following classification result for $S(\FK)$. 
\begin{theorem}\cite[Proposition 6.1.]{DV} \label{Thm:MaximalIsotropic} The image of the Selmer group $S(\FK) \subseteq V(\FK)$ is a maximal totally isotropic subspace. \end{theorem}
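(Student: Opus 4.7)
The plan is to verify the two defining properties separately: (a) $S(F)$ is totally isotropic with respect to $b = b_\infty \boxplus b_2$, via Hilbert reciprocity; and (b) $\dim_{\F_2} S(F) \geq r_1+r_2 = \tfrac{1}{2}\dim_{\F_2} V(F)$, from which maximality follows since any totally isotropic subspace of a nondegenerate symmetric bilinear form has dimension at most half of the ambient space.

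For (a), take $\alpha F^{\times 2}, \beta F^{\times 2} \in \Sel_2(F)$. Since $b_\infty$ and $b_2$ are defined as products of local Hilbert symbols, the pairing of their signatures unwinds as
\[ b\bigl(\sgn(\alpha),\,\sgn(\beta)\bigr) \;=\; \prod_{v \mid 2\infty} (\alpha, \beta)_v. \]
Global Hilbert reciprocity gives $\prod_v (\alpha, \beta)_v = 1$ over all places of $F$, so it suffices to verify $(\alpha, \beta)_v = 1$ for each finite $v \nmid 2$. The Selmer condition $(\alpha) = \fraka^2$ forces $v(\alpha)$ to be even, so $\alpha$ is represented mod squares by a unit in $\cO_v^\times$, and likewise for $\beta$; at a place of odd residue characteristic, the Hilbert symbol of two units is trivial.

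For (b), I would use the short exact sequence
\[ 1 \to \cO_F^\times / \cO_F^{\times 2} \to \Sel_2(F) \to \Cl(F)[2] \to 1 \]
induced by $\alpha F^{\times 2} \mapsto [\fraka]$. Combined with Dirichlet's unit theorem and the observation $-1 \notin \cO_F^{\times 2}$, this yields $\dim_{\F_2} \Sel_2(F) = (r_1+r_2) + \dim_{\F_2} \Cl(F)[2]$, so the desired bound is equivalent to $\dim_{\F_2} \ker\sgn \leq \dim_{\F_2} \Cl(F)[2]$.

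The main obstacle is this final kernel bound. An element of $\ker\sgn$ is represented by $\alpha \in F^\times$ with $(\alpha)=\fraka^2$, $\alpha$ totally positive, and $\alpha \in (1+4\cO_v)\cO_v^{\times 2}$ at each $v\mid 2$; I would analyze the map $\ker\sgn \to \Cl(F)[2]$ sending $\alpha F^{\times 2} \mapsto [\fraka]$, noting that the fiber over each class is either empty or a torsor under the group of totally positive units which are locally squares modulo $(1+4\cO_v)\cO_v^{\times 2}$ (taken modulo $\cO_F^{\times 2}$), whose $\F_2$-dimension can be computed from Dirichlet once one controls the image of $\cO_F^\times$ in $V(F)$. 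Structurally, this step is a concrete manifestation of global Poitou-Tate duality for $\muu_2$: the reciprocity identity used in (a) forces the image of $F^\times/F^{\times 2}$ in the restricted product $\bigoplus_v F_v^\times/F_v^{\times 2}$ to be self-annihilating under the Hilbert pairing, and restricting this self-duality to the 2-adic and archimedean factors yields the needed dimension balance.
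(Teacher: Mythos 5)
Your part (a) is exactly the standard argument (and the one in [DV, Prop.\ 6.1], which this paper only cites rather than reproves): Hilbert reciprocity plus the observation that at a finite place $v \nmid 2$ the Selmer condition makes $\alpha$ and $\beta$ units up to squares, so the local symbol is trivial. That half is complete and correct, and your dimension bookkeeping is also right: $\dim_{\F_2} V(\FK) = r_1 + n = 2(r_1+r_2)$, the exact sequence $1 \to \cO_F^\times/\cO_F^{\times 2} \to \Sel_2(\FK) \to \Cl(\FK)[2] \to 1$ gives $\dim_{\F_2}\Sel_2(\FK) = (r_1+r_2) + \rk_2\Cl(\FK)$, and maximality reduces to $\dim_{\F_2}\ker(\sgn) \leq \rk_2\Cl(\FK)$.

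The gap is that this last inequality --- which you correctly flag as ``the main obstacle'' --- is never actually proved. The fiber/torsor analysis you sketch computes $\dim_{\F_2}\ker(\sgn)$ as (dimension of the group of totally positive units that are $2$-adically trivial, mod squares) plus (dimension of the image of $\ker(\sgn)$ in $\Cl(\FK)[2]$), and to conclude you would need the first summand to be bounded by the codimension of that image. ``Controlling the image of $\cO_F^\times$ in $V(\FK)$'' is not something Dirichlet gives you: the signature rank of the unit group is precisely the subtle arithmetic invariant this whole circle of papers is trying to model, so your plan is circular at exactly the hard point, and the appeal to Poitou--Tate self-duality is stated but not executed. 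The clean way to close it --- essentially what [DV] does --- is Kummer theory plus class field theory: a class $\alpha F^{\times 2} \in \ker(\sgn)$ has $(\alpha) = \fraka^2$ (so $F(\sqrt{\alpha})/F$ is unramified at finite $v \nmid 2$), lies in $(1+4\cO_v)\cO_v^{\times 2}$ for $v \mid 2$ (so unramified at $2$), and is totally positive (so unramified at the real places); hence $F(\sqrt{\alpha})$ sits inside the Hilbert class field, and Kummer theory gives an injection $\ker(\sgn) \hookrightarrow \Hom(\Cl(\FK), \Z/2\Z)$, whence $\dim_{\F_2}\ker(\sgn) \leq \rk_2\Cl(\FK)$ (in fact equality, as recorded in \S\ref{Section::2}). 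With that substitution your argument is complete; as written, it is not.
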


\subsection{Bilinear forms}

Let \begin{itemize} \item $\I \simeq \F_2$ with the \defi{dot product} bilinear form represented by the matrix $\begin{pmatrix}1 \end{pmatrix};$ \item $\cH \simeq \F_2^2$ with the \defi{hyperbolic plane} bilinear form represented by the matrix $\begin{pmatrix} 0 & 1 \\ 1 & 0 \end{pmatrix}.$ \end{itemize} 
Every finite dimensional vector space $V$ over $\F_2$ with a symmetric, nondegenerate, bilinear form $b$ is isometric to an orthogonal direct sum of copies of $\I$ and $\cH$. In order to classify these bilinear spaces up to isometry, we need to consider the canonical vector. 

Let $V^*$ be the dual space of $V$. Then $V^*$ contains a \defi{canonical linear functional} $\phi_{\can}  \colon V \to \F_2$ defined by $\phi_{\can}(w) = b(w,w)$ for $w \in V$. Since the bilinear form $b$ is nondegenerate, we have a canonical isomorphism $V \simeq V^*$ given by $w \mapsto b(w,\_)$. The \defi{canonical vector} $\vect_{\can} \in V$ is the unique vector that maps to $\phi_{\can}$ under the isomorphism $V \simeq V^*$. Equivalently, it is the unique vector satisfying $b(\vect_{\can},\vect) = b(\vect,\vect)$ for every vector $\vect \in V$.  Up to isometry, the bilinear space $V$ is determined by its dimension and the canonical vector.

\begin{lemma} \label{lem:bilspace} Let $V$ be a symmetric, nondegenerate, bilinear space over $\F_2$ of dimension $n$. 
 \begin{enumerate} \item  If $n = 2m +1$ is odd, then $V \simeq \cH^{m} \boxplus \I$ is nonalternating and $\vect_{\can}$ is anisotropic; \item If $n = 2m$ is even, then either:
 \begin{enumerate}  \item $V \simeq \cH^{m}$ is alternating and $\vect_{\can} =0$, or 
  \item $V \simeq \cH^{m-1} \boxplus \I^2$ is nonalternating and $\vect_{\can}$ is a nonzero isotropic vector.
\end{enumerate} \end{enumerate} \end{lemma}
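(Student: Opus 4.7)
The plan is to prove the classification in three stages: first, obtain \emph{some} orthogonal decomposition of $V$ into copies of $\I$ and $\cH$; second, normalize so that at most two $\I$-summands appear; third, distinguish the remaining cases via the canonical vector $\vect_{\can}$.

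For the decomposition, I would induct on $n = \dim V$. Either $b$ is alternating, meaning $b(v,v) = 0$ for every $v \in V$, in which case the standard symplectic-form classification forces $n$ to be even and $V \simeq \cH^{n/2}$; or there is some $v_0 \in V$ with $b(v_0, v_0) = 1$, in which case the line $\langle v_0 \rangle$ is an $\I$-summand, nondegeneracy of $b$ gives $V = \langle v_0 \rangle \boxplus \langle v_0 \rangle^{\perp}$, and induction applies to the orthogonal complement. This yields $V \simeq \I^k \boxplus \cH^\ell$ for some $k, \ell \geq 0$.

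For normalization I would verify the characteristic-$2$ identity $\I^3 \simeq \cH \boxplus \I$ by an explicit change of basis: if $g_1, g_2, g_3$ is the orthonormal basis of $\I^3$, then $g_1 + g_2$ and $g_1 + g_3$ span a hyperbolic plane orthogonal to the anisotropic vector $g_1 + g_2 + g_3$. Iterating this identity reduces $k$ by $2$ at a time, so up to isometry $k \in \{0, 1, 2\}$. Matching parities in $n = k + 2\ell$ then isolates exactly the three possibilities in the statement: $k = 1$ when $n$ is odd, and $k \in \{0, 2\}$ when $n$ is even.

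Finally I would identify $\vect_{\can}$ in each of the three model spaces directly from its defining property $b(\vect_{\can}, \vect) = b(\vect, \vect)$. On $\cH^m$ the form is alternating, so $b(\vect, \vect) \equiv 0$, forcing $\vect_{\can} = 0$. On $\cH^{m-1} \boxplus \I^2$ with orthonormal $\I^2$-basis $f_1, f_2$, one finds $\vect_{\can} = f_1 + f_2$, which is nonzero yet satisfies $b(f_1+f_2, f_1+f_2) = 0$, hence is isotropic. On $\cH^m \boxplus \I$ with $\I$-generator $f$, one finds $\vect_{\can} = f$, which is anisotropic. These three behaviors of $\vect_{\can}$ are mutually exclusive, so they both complete the case analysis and give the stated descriptions. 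The only step with any genuine content is the characteristic-$2$ identity $\I^3 \simeq \cH \boxplus \I$; everything else is routine linear algebra over $\F_2$.
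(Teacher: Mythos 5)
Your proof is correct. The paper does not actually prove this lemma --- it invokes the standard classification of nondegenerate symmetric bilinear spaces over $\F_2$ into sums of $\I$ and $\cH$ together with the canonical-vector invariant (following Dummit--Voight) --- and your argument supplies exactly the missing details along the same lines: the inductive splitting off of anisotropic lines, the key identity $\I^3 \simeq \cH \boxplus \I$ (which you verify correctly, and which is indeed the only step with real content, since $\I^2 \not\simeq \cH$ blocks any further reduction), and the direct computation of $\vect_{\can}$ in each of the three normal forms.
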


Let $(\_,\_)_v$ denote the Hilbert symbol at a place $v$ of $\FK$. For $\alpha \in \FK_v^\times$, we have $(-1, \alpha)_v =  (\alpha, \alpha)_v$ so $\sgn_\infty(-1)$ is the canonical vector of $V_\infty(\FK)$ and $\sgn_2(-1)$ is the canonical vector of $V_2(\FK)$. Combining this with previous lemma yields the following result.

\begin{theorem}\cite[Proposition 5.2.]{DV} Let $b_2$ and $b_\infty$ be as given in definition \ref{def:VooK} and \ref{def:V2}. 
\begin{enumerate}
\item $b_\infty$ is nonalternating when $r_1 > 0$.
\item $b_2$ is alternating if and only if the degree $n$ is even and $\sgn_2(-1) = 0$.
\end{enumerate}
\end{theorem}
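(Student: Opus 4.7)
The plan is to deduce both statements from Lemma \ref{lem:bilspace} via the identification, made in the paragraph immediately preceding the theorem, of $\sgn_\infty(-1)$ and $\sgn_2(-1)$ as the canonical vectors of $V_\infty(F)$ and $V_2(F)$ respectively. The guiding principle is that a nondegenerate symmetric bilinear form over $\F_2$ is alternating if and only if its canonical vector vanishes, so both parts reduce to understanding when $\sgn_\infty(-1)$ and $\sgn_2(-1)$ are zero.

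For part (1), I would check directly that $\sgn_\infty(-1) \neq 0$ when $r_1 > 0$. At each real place $v$, the element $-1$ is not a square in $F_v \simeq \R$, so the $v$-component of $\sgn_\infty(-1)$ is the nontrivial class in $F_v^\times/F_v^{\times 2} \simeq \{\pm 1\}$. If at least one real place exists, the canonical vector of $V_\infty(F)$ is therefore nonzero, and $b_\infty$ is nonalternating.

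For part (2), the forward direction applies the same principle twice: if $b_2$ is alternating then its canonical vector $\sgn_2(-1)$ must vanish, and moreover any nondegenerate alternating bilinear form on a finite-dimensional $\F_2$-space has even dimension, forcing $n = \dim_{\F_2} V_2(F)$ to be even. Conversely, if $n$ is even and $\sgn_2(-1) = 0$, then by Lemma \ref{lem:bilspace}(2)(a) we have $V_2(F) \simeq \cH^{n/2}$, which is alternating. I do not anticipate any serious obstacle: the classification result for $\F_2$-bilinear spaces combined with the Hilbert-symbol identity $(-1,\alpha)_v = (\alpha,\alpha)_v$ does essentially all of the work, and the remaining steps are direct consequences.
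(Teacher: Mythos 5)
Your proposal is correct and follows essentially the same route as the paper: identify $\sgn_\infty(-1)$ and $\sgn_2(-1)$ as the canonical vectors via the Hilbert-symbol identity $(-1,\alpha)_v = (\alpha,\alpha)_v$, then read off the alternating/nonalternating dichotomy from Lemma \ref{lem:bilspace} together with the fact that a nondegenerate symmetric form over $\F_2$ is alternating precisely when its canonical vector vanishes. No gaps.
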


\subsection{Relation to class groups} 

For an abelian group $A$, let $\rk_2 A = \dim_{\F_2} A[2]$. There are four ray class groups and their corresponding class fields which are central to our investigation:
\begin{itemize}
\item $\Cl(\FK), H$ — the class group and Hilbert class field;
\item $\Cl^+(\FK), H^+$ — the narrow class group and narrow Hilbert class field;
\item $\Cl_4(\FK), H_4$ — the ray class group and class field of modulus $(4)$;
\item $\Cl^+_4(\FK), H^+_4$ — the narrow ray class group and class field of modulus $(4)\infty$.
\end{itemize}
where $\infty$ denotes the set of all real places.  The 2-Selmer group of a number field is related to the narrow ray class group of modulus (4) by 
\[ \rk_2 \Cl_4(\FK) = \dim_{\F_2} \Sel_2(\FK). \] Further, we have the following refinement of this relation in terms of the kernels of the  various signature maps as follows: 
\begin{eqnarray*}
\rk_2 \Cl(\FK) & = & \dim_{\F_2} \ker(\sgn), \\
\rk_2 \Cl^+(\FK) & = &  \dim_{\F_2} \ker(\sgn_2), \\
\rk_2 \Cl_4(\FK) & = & \dim_{\F_2} \ker(\sgn_\infty). 
\end{eqnarray*}

\section{Imprimitive part of the $2$-Selmer group} \label{Section::3}

\subsection{Imprimitive types} 

For an extension of number fields $\FK/E$, the $E$-\defi{imprimitive} part of the 2-Selmer group of $\FK$ is \[ \Sel_2^E(\FK) \coloneqq  \Sel_2(\FK) \cap  E^\times \FK^{\times2} / \FK^{\times2}.  \] Since the only proper subfield of an $S_n$-number fields is $\Q$, this section focuses on a classification of the $\Q$-imprimitive part of the 2-Selmer group for $S_n$-number fields of even degree.

\begin{proposition} \label{prop:reps} The $\Q$-imprimitive part of the $2$-Selmer group of a number field $\FK$ is generated by the following square classes \[ \Sel_2^\Q(\FK) =  \{ -1 \} \cup \{ p \in \Z  : \text{$p$ prime and } (p) = \frak{a}^2  \text{ for a fractional ideal } \fraka \mbox{ of }\FK \}  / \FK^{\times2}.   \]
\end{proposition}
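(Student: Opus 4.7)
The plan is to prove the two inclusions of the stated set equality. The easy direction is that every listed element lies in $\Sel_2^\Q(\FK)$: the class of $-1$ is rational with principal ideal $((-1)) = \calO_\FK = (1)^2$, and any rational prime $p$ with $(p) = \fraka^2$ in $\FK$ is, by hypothesis, rational with square principal ideal in $\calO_\FK$.

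For the nontrivial direction, I would take a class $[\alpha] \in \Sel_2^\Q(\FK)$ and replace it by a rational representative. By definition of $\Sel_2^\Q(\FK)$, we may write $\alpha = q \beta^2$ with $q \in \Q^\times$ and $\beta \in \FK^\times$; since $(\alpha) = \fraka^2$ is a square ideal of $\calO_\FK$, so is $(q) = (\alpha)(\beta)^{-2}$. Then, using the inclusion $\Q^{\times2} \subseteq \FK^{\times2}$, I further normalize $q$ modulo $\Q^{\times2}$ to a signed squarefree integer, writing $q = \pm p_1 p_2 \cdots p_k$ for distinct rational primes $p_i$. This operation does not change the class $[q] = [\alpha]$ in $\FK^\times/\FK^{\times2}$, nor the property that $(q)$ is a square ideal in $\calO_\FK$.

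The heart of the argument is a disjoint-supports observation: the principal ideals $(p_1), \ldots, (p_k)$ of $\calO_\FK$ have pairwise disjoint supports, since primes of $\calO_\FK$ lying above distinct rational primes are distinct. Hence in the factorization $(q) = (p_1)(p_2)\cdots(p_k)$, for a prime $\frakP$ of $\calO_\FK$ above some $p_i$, the $\frakP$-adic valuation of $(q)$ equals $v_\frakP(p_i) = e(\frakP \mid p_i)$; requiring this to be even for every such $\frakP$ is exactly the condition that each $(p_i)$ is individually a square ideal in $\calO_\FK$. Thus each $p_i$ lies in the stated generating set, and $[\alpha] = [\pm 1]\prod_i [p_i]$ as desired.

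There is no real obstacle in this argument. The only step requiring a moment's thought is the disjointness observation, which is elementary; the rest is just unwinding the definition of $\Sel_2^\Q(\FK)$ and reducing to a squarefree rational representative.
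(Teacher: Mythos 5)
Your proof is correct and follows essentially the same route as the paper: normalize the representative to a squarefree (signed) integer, factor it into rational primes, and use the fact that primes of $\calO_\FK$ above distinct rational primes are distinct to conclude that each $(p_i)$ is individually a square of a fractional ideal. The paper's own proof is a terser version of exactly this argument (it leaves the disjoint-supports step and the sign $\pm 1$ implicit), so your write-up is if anything more complete.
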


\begin{proof} Let $z\FK^{\times2}$ represent a square class in $\Sel_2^\Q(F)$. Multiplying by an appropriate square, we can assume that $z \in \Z$ is squarefree and hence factors over the integers as $z = p_1\dots p_k$. Since $(z) = \mathfrak{a}^2$ for a fractional ideal $\fa$ of $\FK$, then for each $i$ we have $(p_i) = \mathfrak{a_i}^2$ for some fractional ideal $\fa_i$ of $\FK$. \end{proof}

\subsection{Classification for the image of $\Sel_2^\Q(F)$} \label{subsec::Imprimtypes}

We now classify the image of the representative for $\Sel_2^\Q(\FK)$ in Proposition \ref{prop:reps} under the map $\sgn : \Sel_2(\FK) \to V(\FK)$.  The bilinear spaces in Lemma \ref{lem:bilspace} contain different types of vectors, specifically:  \begin{itemize}
\item The alternating bilinear space in Lemma \ref{lem:bilspace} $(2)(a)$ contains 2 different types of vectors: the zero vector and nonzero vectors.
\item The nonalternating bilinear space in Lemma \ref{lem:bilspace} $(2)(b)$ contains 3 different types of vectors: the zero vector, the canonical vector, and noncanonical nonzero vectors.
\end{itemize}

\begin{proposition} \label{thm:Imageoffamilies} Let $p \in \Z$ be an odd prime. The image of the representatives given in Proposition \ref{prop:reps} under the map $ \sgn : \Sel_2(\FK) \to V_\infty(\FK) \boxplus V_2(\FK)$ are as follows: 
\begin{itemize}
\item If $V_2(\FK)$ is nonalternating then
\begin{eqnarray*} \sgn(-1) & = &  \: \: (\vect_{\can},\vect_{\can}); \\ \text{ If $2 \in \Sel_2^\Q(\FK)$ then } \sgn(2) & = & \begin{cases} (0,0) &  \sgn_2(2) = 0; \\  (0,\vect_{\can}) &  \sgn_2(-2) = 0;  \\ (0,\vect) & otherwise;  \end{cases}  \\ \text{ If $p \in \Sel_2^\Q(\FK)$ then }  \sgn(p) & = & \begin{cases} (0,0) &  \text{primes } p \equiv 1 \pmod 4; \\ (0,\vect_{\can}) &  \text{primes } p \equiv 3 \pmod 4.  \end{cases}  \end{eqnarray*} 
\item If $V_2(\FK)$ is alternating then
\begin{eqnarray*} \sgn(-1) & = &  \: \: (\vect_{\can}, 0); \\ \text{ If $2 \in \Sel_2^\Q(\FK)$ then }  \sgn(2) & = & \begin{cases} (0,0) & \sgn_2(2) = 0;  \\ (0,\vect)  & otherwise;  \end{cases}  \\ \text{ If $p \in \Sel_2^\Q(\FK)$ then } \sgn(p) & = & (0,0).  \end{eqnarray*} 
where $w_{\can}$ is the canonical vector and $w$ is a noncanonical nonzero vector. 
\end{itemize} \end{proposition}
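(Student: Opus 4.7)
The plan is to verify each row of the table case by case, leveraging the identification $\sgn_\infty(-1) = \vect_{\can}$ in $V_\infty(\FK)$ and $\sgn_2(-1) = \vect_{\can}$ in $V_2(\FK)$ noted just before Theorem 2.9, which itself follows from the Hilbert symbol identity $(-1,\alpha)_v = (\alpha,\alpha)_v$. Throughout I write the $\F_2$-vector space operations additively, so that $\sgn(\alpha\beta) = \sgn(\alpha) + \sgn(\beta)$ for $\alpha, \beta \in \Sel_2(\FK)$.

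The value of $\sgn(-1)$ is immediate from the above: its archimedean component is $\vect_{\can}$ of $V_\infty(\FK)$ because $-1$ is negative at every real place, and its 2-adic component is $\vect_{\can}$ of $V_2(\FK)$. In the alternating case, Lemma \ref{lem:bilspace}(2)(a) forces $\vect_{\can} = 0$ in $V_2(\FK)$, collapsing the second entry to zero and yielding $\sgn(-1) = (\vect_{\can}, 0)$.

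For $\sgn(2)$ when $2 \in \Sel_2^\Q(\FK)$, the archimedean component is $0$ because $2 > 0$ at every real embedding. For the 2-adic component, I split into three mutually exhaustive subcases according to whether $\sgn_2(2) = 0$, whether $\sgn_2(-2) = 0$, or neither. Additivity together with $\sgn_2(-1) = \vect_{\can}$ gives $\sgn_2(2) = \vect_{\can} + \sgn_2(-2)$, which directly yields the tabulated values. The only point requiring verification is that in the third subcase $\sgn_2(2)$ is genuinely a noncanonical nonzero vector, but this is immediate: $\sgn_2(2) \neq 0$ by assumption, and $\sgn_2(2) \neq \vect_{\can}$ since otherwise $\sgn_2(-2) = \vect_{\can} + \sgn_2(2) = 0$. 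In the alternating case, $\vect_{\can} = 0$ merges the first two subcases and the list simplifies as stated.

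For an odd prime $p \in \Sel_2^\Q(\FK)$, we have $\sgn_\infty(p) = 0$ since $p > 0$, and $p$ is a unit at every place $v \mid 2$. The key observation is that if $p \equiv 1 \pmod{4}$ in $\Z$ then $p \in 1 + 4\Z \subseteq 1 + 4\cO_v \subseteq (1+4\cO_v)\cO_v^{\times2}$ at each $v \mid 2$, so $\sgn_2(p) = 0$. When $p \equiv 3 \pmod{4}$, the same observation applied to $-p$ gives $\sgn_2(-p) = 0$, so by additivity $\sgn_2(p) = \sgn_2(-1) = \vect_{\can}$. In the alternating case both possibilities collapse to $\sgn(p) = (0,0)$ since $\vect_{\can} = 0$. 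I expect no serious obstacle in this argument: the entire proposition reduces to the Hilbert symbol identity plus additivity of the signature map, with the only mild check being the noncanonicity claim in the third subcase of $\sgn(2)$, which has been handled above.
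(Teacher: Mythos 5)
Your proposal is correct and follows essentially the same route as the paper's proof: positive rational primes have trivial archimedean signature, the congruence class of an odd prime mod $4$ determines whether $p$ or $-p$ lies in $1+4\cO_v$, and the three cases for $\sgn_2(2)$ are distinguished exactly as in the paper. Your write-up is in fact slightly more complete, since you explicitly treat $\sgn(-1)$ and verify that $\sgn_2(2)$ is noncanonical in the residual case via $\sgn_2(2)=\vect_{\can}+\sgn_2(-2)$, two points the paper's proof leaves implicit.
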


\begin{proof} We consider representatives determined by primes $p \in \Z$ with $(p) = \frak{a}^2$. These classes have trivial image in $V_\infty(\FK)$ and so it remains to determine their image in $V_2(\FK)$. If $p$ is an odd prime, then either: $p \equiv 1 \pmod 4$ which means that for each place $v \mid 2$ then $p \in 1+4\cO_v$ and so $\sgn_2(p) = 0$; or  $p \equiv 3 \pmod 4$ which means that for each place $v \mid 2$ then $-p \in 1+4\cO_v$ and so $\sgn_2(p) = \vect_{\can}$. When $p =2$ there are three possibilities: $\sgn_2(2) = 0$, $\sgn_2(2) = \vect_{\can}$ which means that $\sgn_2(-2) = 0$, or $\sgn_2(2) = \vect$ for some noncanonical nonzero vector in $w \in V_2(\FK)$.  \end{proof}

\begin{definition} The \defi{$\Q$-imprimitive type} of a number field $\FK$ is the image $S^\Q(\FK) \coloneqq \sgn(\Sel_2^\Q(\FK))$
of the $\Q$-imprimitive part of the 2-Selmer group inside the 2-Selmer signature space. 
\end{definition}


\begin{theorem}[Classification] \label{thm::Q-types} There are 6 possibilities for $S^\Q(\FK)$, two types {\normalfont A(i)$-$(ii)} occur when $V_2(\FK)$ is alternating and four types {\normalfont B(i)$-$(iv)} occur when $V_2(\FK)$ is nonalternating. The following table provides a basis for each case. \normalfont 
\def\arraystretch{1.5} 
\begin{center}
\begin{tabular}{ c | c | l  }
\normalfont $\Q$-imprimitive type & $V_2(F)$ Alternating  & Generating set for $S^\Q(F)$ \\ \hline \hline
A(i) & Yes & $(\vect_{\can}, 0)$   \\  \hline
A(ii) & Yes & $(\vect_{\can}, 0), (0,\vect)$  \\  \hline
B(i) & No &  $(\vect_{\can}, \vect_{\can})$   \\  \hline
B(ii) & No & $(\vect_{\can}, \vect_{\can}), (0, \vect)$  \\  \hline
B(iii) & No & $(\vect_{\can}, \vect_{\can}), (0, \vect_{\can})$ \\  \hline
B(iv)& No & $(\vect_{\can}, \vect_{\can}), (0, \vect_{\can}), (0,\vect)$   \\  \hline
\end{tabular}
\end{center}
\textit{The following table lists the different conditions necessary for each $\Q$-imprimitive type. In the case that a possibility is excluded, we mark it with an} \xmark. \normalfont 
\def\arraystretch{1.5}
\begin{center}
\begin{tabular}{ c | c | c | c | c | c  } 
 \makecell{Prime $p \in \Sel_2^\Q(F)$ \\ with $p \equiv 3 \mod 4$} & $2  \in \Sel^\Q_2(F)$ & $\sgn_2(2) = 0$ & $\sgn_2(-2) = 0$  & $\sgn_2(-1) = 0$ &  $\Q$-imprimitive type \\  \hline \hline
\multirow{6}{*}{No}  & No & \xmark & \xmark & No  & B(i)  \\ \cline{2-6}
& \multirow{5}{*}{Yes}  &  No & No & No  &  B(ii)  \\  \cline{3-6}
&  & Yes & No & No  &  B(i)  \\  \cline{3-6}
& &  No & Yes & No & B(iii)  \\   \cline{3-6}
& &  No & No & Yes & A(ii) \\   \cline{3-6}
& &  Yes & Yes & Yes & A(i)  \\  \hline \hline
\multirow{6}{*}{Yes}  & No & \xmark & \xmark & No  & B(iii) \\ \cline{2-6}
& \multirow{5}{*}{Yes}  &  No & No & No  & B(iv) \\  \cline{3-6}
&  & Yes & No & No  &  B(iii)  \\  \cline{3-6}
& &  No & Yes & No & B(iii)  \\   \cline{3-6}
& &  No & No & Yes & A(ii) \\   \cline{3-6}
& &  Yes & Yes & Yes & A(i)  \\  \hline \hline
\end{tabular}
\end{center}
\end{theorem}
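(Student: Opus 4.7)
The plan is to combine the two preceding propositions into a direct enumeration. By Proposition \ref{prop:reps}, the group $\Sel_2^\Q(F)$ is generated by $-1$ together with rational primes $p$ satisfying $(p) = \fa^2$ in $F$. Hence $S^\Q(F) = \sgn(\Sel_2^\Q(F))$ is the $\F_2$-subspace of $V_\infty(F)\boxplus V_2(F)$ spanned by the images of these generators, and Proposition \ref{thm:Imageoffamilies} enumerates exactly which vectors of $V_\infty(F)\boxplus V_2(F)$ can appear as such an image. The entire argument is then a case analysis with these two propositions as the only input.

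First I would split into the two cases of Lemma \ref{lem:bilspace} governing the parity of $b_2$, using the identification (noted right after Lemma \ref{lem:bilspace}) that $V_2(F)$ is alternating if and only if $\sgn_2(-1)=0$. In the alternating case, $\sgn(-1) = (\vect_{\can},0)$ and every odd prime generator maps to $(0,0)$, while $\sgn(2)$ is either $(0,0)$ or $(0,\vect)$ for a noncanonical nonzero $\vect\in V_2(F)$. The resulting subspace is therefore generated either by $(\vect_{\can},0)$ alone (type A(i)) or by $(\vect_{\can},0)$ and $(0,\vect)$ (type A(ii)), accounting for all possibilities in the alternating case. In the nonalternating case, $\sgn(-1)=(\vect_{\can},\vect_{\can})$, odd prime generators contribute either $(0,0)$ (if $p\equiv 1\pmod 4$) or $(0,\vect_{\can})$ (if $p\equiv 3\pmod 4$), and the possible images of $2$ are $(0,0)$, $(0,\vect_{\can})$, or $(0,\vect)$ for $\vect$ noncanonical nonzero. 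The distinct subspaces that can be spanned by $(\vect_{\can},\vect_{\can})$ together with zero, one, or two of the vectors from $\{(0,\vect_{\can}),(0,\vect)\}$ are exactly the four spaces B(i)--(iv), giving the first table.

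For the second table, I would verify each row by simply computing $\sgn(\Sel_2^\Q(F))$ as a sum of the subspaces generated by the explicit images prescribed by the hypotheses in that row. For instance, if no prime $p\equiv 3\pmod 4$ lies in $\Sel_2^\Q(F)$, $2\in\Sel_2^\Q(F)$, $\sgn_2(2)\ne 0$, $\sgn_2(-2)=0$, and $\sgn_2(-1)\ne 0$, then the generators $-1$ and $2$ map respectively to $(\vect_{\can},\vect_{\can})$ and $(0,\vect_{\can})$, yielding type B(iii); the same mechanical check produces each remaining line. The rows marked \xmark{} simply record that when $2\notin\Sel_2^\Q(F)$, the values of $\sgn_2(\pm 2)$ are not defined in the relevant sense. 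It is worth recording, as a consistency check, that the row with $\sgn_2(-1)=0$ forces the alternating case, so those rows must land in A(i) or A(ii), which is what the table asserts.

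The only place that requires a bit of care is showing that the six candidate subspaces are genuinely distinct up to isometry, so that no listed row is collapsing two types. For this I would use Lemma \ref{lem:bilspace} together with the isometry invariants of the triple $(\dim S^\Q, \dim S^\Q\cap V_\infty, \text{presence of }\vect_{\can})$: the alternating versus nonalternating distinction separates the A's from the B's, the dimension separates adjacent types, and within a fixed dimension the types are distinguished by whether $\vect_{\can}\in V_2(F)$ lies in the $V_2$-projection of $S^\Q$. I expect the main (mild) obstacle to be bookkeeping — ensuring that the $B(ii)$ versus $B(iii)$ split is correctly reflected in the conditions table — rather than any deeper difficulty, since everything reduces to Proposition \ref{thm:Imageoffamilies} applied one generator at a time.
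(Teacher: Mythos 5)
Your proposal is correct and follows essentially the same route as the paper, whose proof is simply the one-line remark that the result is a case analysis combining the conditions table with Proposition \ref{thm:Imageoffamilies}; you have fleshed out exactly that analysis, generator by generator, splitting on whether $V_2(\FK)$ is alternating. Your added check that the six subspaces are pairwise non-isometric is a reasonable (and correct) supplement that the paper leaves implicit.
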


\begin{proof}
The proof comes from a case analysis using the table above and Proposition \ref{thm:Imageoffamilies}. 
 \end{proof}
 
 \begin{remark} When $r_1 = 0$ then $V_\infty(F) = \{0\}$ is trivial and so $\vect_{\can} = 0 \in V_\infty(F)$. In this case, then type B(i) and type B(iii) are the same and type B(ii) = type B(iv) are the same as well. 
 \end{remark}

\section{Conjectures: Isotropy ranks} \label{Section::5}

\begin{definition}
The \defi{isotropy rank} of a number field $\FK$ is the quantity
\[ k(\FK) \colonequals \dim_{\F_2} \Cl^+(\FK)[2]  - \dim_{\F_2} \Cl^+(\FK)[2] = \dim_{\F_2} S(\FK) \cap V_\infty(\FK)  \]
\end{definition}
From now on, let $\FK$  be an $S_n$-number field with $r_1$ real places and $2r_2$ complex places. Following the work in \cite[Section 7]{DV}, we make the following heuristic assumption:
\begin{enumerate}
\item[(H1)] As $\FK$ varies over $S_n$-number fields with fixed signature $(r_1,2r_2)$ and $\Q$-imprimitive type $S^\Q(\FK) \subseteq S(\FK)$ ordered by absolute discriminant, then $S(\FK) \subseteq V(\FK)$ is a random maximal totally isotropic subspace. \end{enumerate}  

By combining our heuristic assumption with the algebraic structure of the 2-Selmer group and the signature spaces, we develop a model for the distribution of isotropy ranks for each of the distinct $\Q$-imprimitive types in Theorem \ref{thm::Q-types}. Let
 \begin{eqnarray} \label{eqn::structure} V   \simeq   \I^2 \boxplus \cH^{r_1/2-1}  \: \:  & \text{ and } &  \: \: 
 W  \simeq  \begin{cases} \cH^{r_1/2 + r_2} & \text{ for types A(i)-(ii); } \\ \I^2 \boxplus \cH^{r_1/2 + r_2-1} & \text{ for  types B(i)-(iv). } \end{cases}   \end{eqnarray} 
where $r_1/2 \in \Z_{\geq 1}$ and $r_2 \in \Z_{\geq 0}$. Let $S \subset V \boxplus W$ be a maximal totally isotropic subspace with isotropy rank $k(S) \colonequals \dim_{\F_2} S \cap V$. Let $S^\Q \subset V \boxplus W$ be a fixed isotropic subspace corresponding to the $\Q$-imprimitive types whose basis is specified by the table in Theorem \ref{thm::Q-types}. 

\begin{theorem} \label{thm::isoprobs}
For $n,m,k \in \Z_{\geq0}$ and $\delta = \{0,1\}$, let 
\begin{eqnarray*} C_\delta (n,m,k) & \colonequals & \frac{1}{2^{k(k+m)}} \cdot  \frac{ (2)_{2n+m +\delta}  (4)_{n} (4)_{n+m} }{ (4)_{2n+m +\delta} (4)_{n-k}(2)_{k} (2)_{k + m}} \end{eqnarray*}
If $S \subset V \boxplus W$ is a maximal totally isotropic subspace with $S^\Q \subseteq S$ that is selected uniformly at random, then the probability that $S$ has isotropy rank $k$ is given by  
 
\begin{enumerate}[(I)]
\item Type {\normalfont A(i)} follows the distribution $C_0(r_1/2-1, r_2 +1,k-1) \text{ for } 1 \leq k \leq r_1/2.$ 
\item Type {\normalfont A(ii)} follows the distribution $C_0(r_1/2-1, r_2,k-1)$ for $1 \leq k \leq r_1/2$. 
\item Type {\normalfont B(i)} follows the distribution 
$$\begin{cases} C_1(r_1/2-1, r_2,k) & k = 0; \\
C_1(r_1/2-1, r_2,k) + C_1(r_1/2-1, r_2,k-1)/2^{r_1+r_2-1}  & 0<k < r_1/2; \\
C_1(r_1/2-1, r_2,k-1)/2^{r_1+r_2-1}  & k = r_1/2. \end{cases}$$. 
\item If $r_2 = 0$, then type {\normalfont B(ii)} follows the distribution 
$$\begin{cases} C_1(r_1/2-2, 1,k-1) & k = 1; \\
C_1(r_1/2-2, 1,k-1) + C_1(r_1/2-2, 1,k-2)/2^{r_1+r_2-2}  & 1<k < r_1/2; \\
C_1(r_1/2-2, 1,k-2)/2^{r_1+r_2-2} & k = r_1/2. \end{cases}$$
and otherwise it follows the distribution 
$$\begin{cases} C_1(r_1/2-1, r_2-1,k) & k = 0; \\
C_1(r_1/2-1, r_2-1,k) + C_1(r_1/2-1, r_2-1,k-1)/2^{r_1+r_2-2}  &0< k < r_1/2; \\
C_1(r_1/2-1, r_2-1,k-1)/2^{r_1+r_2-2} & k = r_1/2. \end{cases}$$ 
\item Type {\normalfont B(iii)} follows the distribution $C_0(r_1/2-1, r_2,k-1)$ for $1 \leq k \leq r_1/2$. 
\item If $r_2 = 0$, then type {\normalfont B(iv)} follows the distribution $C_0(r_1/2-2,1,k-2)$ for $2 \leq k \leq r_1/2$ and otherwise it follows the distribution $C_0(r_1/2-1, r_2-1,k-1)$ for $1 \leq k \leq r_1/2$.
\end{enumerate}
\end{theorem}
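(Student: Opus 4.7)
The strategy is to reduce the problem to counting maximal totally isotropic subspaces of a quotient space, then apply a general counting result that will be established in \S\ref{Section::5}. Since any maximal totally isotropic $S \supseteq S^\Q$ corresponds bijectively to a maximal totally isotropic subspace $\overline{S} \subseteq \overline{U} := (S^\Q)^\perp / S^\Q$ (a space that inherits a nondegenerate symmetric bilinear form), and since $\dim(S \cap V) = \dim(\overline{S} \cap \overline{V}) + \dim(S^\Q \cap V)$, where $\overline{V}$ denotes the image of $V \cap (S^\Q)^\perp$ in $\overline{U}$, the problem reduces to computing how often a random maximal totally isotropic subspace of $\overline{U}$ meets the fixed subspace $\overline{V}$ in a prescribed dimension.

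For each of the six $\Q$-imprimitive types from Theorem \ref{thm::Q-types}, the first task is to determine the numerical invariants of the pair $(\overline{U}, \overline{V})$: the dimension and bilinear type of $\overline{U}$ (which fixes the parameter $\delta \in \{0, 1\}$), the dimension of $\overline{V}$ and the structure of $\overline{V}^\perp / \overline{V}$, and the base contribution $\dim(S^\Q \cap V)$. For instance, for type B(iii), one may rewrite $S^\Q = \langle (\vect_{\can}, 0), (0, \vect_{\can}) \rangle$, so $\dim(S^\Q \cap V) = 1$ forces $k \geq 1$; the quotient $\overline{U}$ becomes alternating of dimension $r_1 + 2r_2 - 2$, and $\overline{V}$ of dimension $r_1 - 2$ produces exactly the parameters $(n, m, \delta) = (r_1/2 - 1, r_2, 0)$ and the single-term formula $C_0(r_1/2 - 1, r_2, k - 1)$.

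The general counting formula from \S\ref{Section::5} then outputs precisely the proportion $C_\delta(n, m, k')$ when applied to the correct quotient, where $k' = k - \dim(S^\Q \cap V)$. For types A(i), A(ii), B(iii), and B(iv), each $S \supseteq S^\Q$ with isotropy rank $k$ corresponds uniquely to a single $\overline{S}$ with the appropriate intersection with $\overline{V}$, producing the stated single-term distributions. For types B(i) and B(ii), however, the coupling element $(\vect_{\can}, \vect_{\can}) \in S^\Q$ does not lie in $V$; instead its component $(\vect_{\can}, 0)$ either belongs to $S$ or it does not, yielding two disjoint branches. The branch in which $(\vect_{\can}, 0) \in S$ contributes an additional $C_1(\ldots, k - 1)$ term, weighted by the proportion $2^{-(r_1 + r_2 - 1)}$ (respectively $2^{-(r_1 + r_2 - 2)}$ for B(ii)) of maximal totally isotropic subspaces of $\overline{U}$ that contain the distinguished class $\overline{(\vect_{\can}, 0)} = \overline{(0, \vect_{\can})}$. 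Separately, the $r_2 = 0$ boundary cases for B(ii) and B(iv) must be handled on their own because the $W$ factor then has minimal rank and the parameters $(n, m)$ shift accordingly.

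The main obstacle I anticipate is the dichotomy analysis for types B(i) and B(ii): one must carefully identify $\overline{(\vect_{\can}, 0)}$ as a well-defined nonzero isotropic class in $\overline{U}$ (well-defined precisely because $(\vect_{\can}, \vect_{\can}) \in S^\Q$ identifies $(\vect_{\can}, 0)$ with $(0, \vect_{\can})$ modulo $S^\Q$), verify that the weight $2^{-(r_1 + r_2 - 1)}$ matches the proportion of maximal isotropic subspaces of $\overline{U}$ passing through this class, and confirm that the two branches partition all maximal isotropic subspaces of $\overline{U}$ without overlap. Once these structural reductions are in place, matching the resulting expressions to the stated $C_\delta(n, m, k)$ factors becomes a routine manipulation of $q$-Pochhammer identities.
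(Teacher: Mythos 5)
Your proposal follows essentially the same route as the paper: the paper strips off the generators of $S^\Q$ via its Decomposition Lemma (Lemma \ref{lem::Decomposition} and Remark \ref{rem::isosub}) rather than forming the quotient $(S^\Q)^\perp/S^\Q$, but these are the same reduction in different clothing, and both then invoke the count of maximal totally isotropic subspaces of an orthogonal direct sum with prescribed intersection dimension (Lemma \ref{lem::maxtotisowithk}), including the same contains/does-not-contain dichotomy for $(\vect_{\can},0)$ in types B(i)--(ii) and the same special handling of $r_2=0$.

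Two details of your narrative would not survive the verification you promise. First, for type B(iii) the quotient $(S^\Q)^\perp/S^\Q$ has dimension $2r_1+2r_2-4$, not $r_1+2r_2-2$ (you appear to have recorded only the $W$-side of the quotient); your parameters $(n,m)=(r_1/2-1,r_2)$ are nevertheless correct. Second, and more substantively, the factor $2^{-(r_1+r_2-1)}$ in the B(i) formula is \emph{not} the proportion of maximal totally isotropic subspaces of $\overline{U}\simeq \cH^{r_1+r_2-1}$ containing the class of $(\vect_{\can},0)$: that proportion is $b(r_1+r_2-2)/b(r_1+r_2-1)=1/(2^{r_1+r_2-1}+1)$. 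The containing branch contributes $d(r_1/2-1,r_2,k-1)/b(r_1+r_2-1)$, and the identity $d(n,m,k-1)/b(2n+m+1)=C_1(n,m,k-1)\cdot 2^{-(2n+m+1)}$ is purely a normalization artifact of how $C_1$ is defined relative to $d$; the power of $2$ absorbs part of the count and is not itself a conditional probability. If you run the computation with the raw counts $d(n,m,\cdot)$ and the totals $b(\cdot)$ as the paper does, the stated formulas drop out; if you instead try to factor the answer as (containment probability)$\times C_1$, the bookkeeping will not close.
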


We postpone the proof of Theorem \ref{thm::isoprobs} until $\S$\ref{ss:max} where we develop the structure of maximal totally isotropic subspace inside an orthogonal direct sum. Combining Theorem \ref{thm::isoprobs} and our heuristic assumption (H1) yields the following conjecture.
 
\begin{conjecture} \label{conj::isotropyranks} 
As $\FK$ varies over $S_n$-number fields with signature $(r_1,r_2)$ and fixed $\Q$-imprimitive type chosen from A(i)-(ii) or B(i)-B(iv) ordered by absolute discriminant.
The density of fields with isotropy rank $k$ follows the distributions in Theorem \ref{thm::isoprobs}.
\end{conjecture}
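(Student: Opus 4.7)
The plan is to \emph{derive} (rather than prove unconditionally) Conjecture \ref{conj::isotropyranks} by combining the already-established structural identification of $k(F)$ with the combinatorial dimension $\dim_{\F_2}(S(F)\cap V_\infty(F))$, the heuristic assumption (H1), and the ensemble calculation carried out in Theorem \ref{thm::isoprobs}. So this is really a ``reduction-style'' argument, and the writeup will be short: the two theorems do the work, and the role of the proposal is to glue them together correctly.

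First, I would isolate the purely algebraic translation step. From the kernel formulas of \S\ref{Section::2},
\begin{equation*}
\rk_2 \Cl(F) = \dim_{\F_2}\ker(\sgn), \qquad \rk_2 \Cl^+(F) = \dim_{\F_2}\ker(\sgn_2),
\end{equation*}
we get $k(F) = \dim_{\F_2}\ker(\sgn_2) - \dim_{\F_2}\ker(\sgn)$, and the short exact sequence
\begin{equation*}
0 \to \ker(\sgn) \to \ker(\sgn_2) \xrightarrow{\sgn_\infty} S(F)\cap V_\infty(F) \to 0
\end{equation*}
inside $\Sel_2(F)$ identifies $k(F) = \dim_{\F_2}\bigl(S(F)\cap V_\infty(F)\bigr)$, matching the combinatorial isotropy rank $k(S)$ of \S\ref{Section::5}. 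This turns the number-theoretic quantity of interest into a statistic about the isotropic subspace $S(F) \subseteq V(F)$ whose dimension is determined by Theorem \ref{Thm:MaximalIsotropic} and whose $\Q$-imprimitive part $S^\Q(F)$ is classified by Theorem \ref{thm::Q-types}.

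Next, I would put in the local models: for an $S_n$-number field with signature $(r_1,r_2)$, Dummit--Voight's Hilbert-symbol computations give the isometry classes $V_\infty(F) \simeq \I^2 \boxplus \cH^{r_1/2-1}$ and $V_2(F) \simeq \cH^{r_1/2+r_2}$ or $\I^2 \boxplus \cH^{r_1/2+r_2-1}$ depending on whether $V_2(F)$ is alternating; these are exactly the spaces $V$ and $W$ of (\ref{eqn::structure}). Conditioning on the $\Q$-imprimitive type fixes the isometry class of the pair $(V\boxplus W,\, S^\Q)$. Applying (H1) then says that, in the conditional ensemble, $S(F)$ is distributed uniformly on maximal totally isotropic subspaces of $V\boxplus W$ containing $S^\Q$. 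Feeding this into Theorem \ref{thm::isoprobs} and pushing forward the statistic $k$ yields the six case-by-case distributions in the conjecture.

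The genuine obstacle—and the reason this is stated as a conjecture—is that (H1) itself is not known: one needs to show that as $F$ runs over $S_n$-fields of fixed signature and fixed $\Q$-imprimitive type ordered by $|\disc F|$, the subspace $S(F)$ equidistributes over the appropriate finite set. A rigorous version would require counting $S_n$-fields of bounded discriminant with prescribed splitting behavior at $\infty$ and at $2$ together with a prescribed 2-Selmer signature image, which is out of reach beyond small $n$ (e.g.\ $n=4$ rests on Bhargava's parametrization, as used in \S\ref{Section::6}); the independence between the imprimitive generators in Theorem \ref{thm::Q-types} and the remaining Selmer classes is the technical heart that (H1) postulates. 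I would therefore close the writeup by (i) recording the deduction of Conjecture \ref{conj::isotropyranks} from (H1) and Theorem \ref{thm::isoprobs} as essentially immediate, and (ii) pointing to \S\ref{Section::6} for numerical corroboration in the $S_4$ case, which is the only regime where one can currently test the heuristic directly.
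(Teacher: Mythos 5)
Your proposal is correct and takes essentially the same route as the paper, which derives the conjecture in one line by combining heuristic (H1) with Theorem \ref{thm::isoprobs}. The extra translation step you supply---identifying $k(F)$ with $\dim_{\F_2}\bigl(S(F)\cap V_\infty(F)\bigr)$ via the kernel formulas and the short exact sequence---is sound and simply makes explicit what the paper absorbs into its definition of the isotropy rank.
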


\subsection{Maximal Totally Isotropic Subspaces in a Orthogonal Direct Sum} \label{ss:max}

 We now count maximal totally isotropic subspaces inside an orthogonal direct sum. A complete discussion can be found \cite[Appendix A]{DV}.

\begin{lemma}[Decomposition Lemma] \label{lem::Decomposition}
Let $V$ be a finite dimensional, nondegenerate, symmetric bilinear space over $\F_2$. For any vector $\vect \in V$, there is an orthogonal decomposition
\[ V = V_\vect \boxplus V_\vect^\perp \]
such that $\vect \in V_\vect$ and either: $V_\vect \simeq \I$ if $\vect$ is an anisotropic canonical vector, $V_\vect \simeq \I^2$ if $\vect$ is an isotropic canonical vector, or $V_\vect \simeq \cH$ otherwise. 
\end{lemma}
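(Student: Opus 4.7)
The plan is to construct $V_\vect$ explicitly by case analysis on $\vect$, verify that the restriction of $b$ to $V_\vect$ is nondegenerate, and then apply the standard fact that a nondegenerate subspace $U$ of a nondegenerate bilinear space $V$ satisfies $V = U \boxplus U^\perp$. (This last step is routine: $U \cap U^\perp = 0$ by nondegeneracy of $b|_U$, and the map $V \to U^*$, $v \mapsto b(v,\_)|_U$, is surjective, so $\dim U^\perp = \dim V - \dim U$.) The entire content of the lemma thus reduces to producing $V_\vect$ of the claimed isometry type in each case; I assume $\vect \neq 0$, the zero case being handled by a trivial convention.

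When $\vect$ is anisotropic, I would take $V_\vect \colonequals \langle \vect \rangle$; the Gram matrix in the basis $(\vect)$ is $(1)$, so $V_\vect \simeq \I$. When $\vect$ is an isotropic canonical vector, Lemma~\ref{lem:bilspace} forces $V$ to be nonalternating, so there is some anisotropic $u \in V$, and the canonical property yields $b(\vect, u) = b(u,u) = 1$. Changing basis to $(u,\, \vect + u)$ produces the identity Gram matrix, so $V_\vect \colonequals \langle \vect, u\rangle \simeq \I^2$.

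The remaining case, where $\vect$ is isotropic but not equal to $\vect_{\can}$, is the most interesting. I want an isotropic $u$ with $b(\vect,u)=1$, producing $V_\vect = \langle \vect, u\rangle$ with Gram matrix $\bigl(\begin{smallmatrix} 0 & 1 \\ 1 & 0 \end{smallmatrix}\bigr) \simeq \cH$. The key observation is that in characteristic $2$ the canonical functional $\phi_{\can}(v) = b(v,v) = b(\vect_{\can}, v)$ is $\F_2$-linear. I would form the linear map
\[ \Phi \colon V \longrightarrow \F_2^2, \qquad v \longmapsto \bigl(b(\vect, v),\, b(\vect_{\can}, v)\bigr). \]
Because $\vect \neq \vect_{\can}$ and $b$ is nondegenerate, the two coordinate functionals are distinct elements of $V^*$ and hence linearly independent, so $\Phi$ is surjective; any preimage of $(1,0)$ supplies the desired $u$.

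The main obstacle is precisely this third case: one must locate an isotropic vector in the affine hyperplane $\{v : b(\vect,v)=1\}$, and non-canonicality of $\vect$ is exactly the hypothesis that makes $b(\vect,\_)$ and $\phi_{\can}$ linearly independent, which forces $\Phi$ to hit $(1,0)$. The first two cases and the concluding orthogonal decomposition step are then straightforward bookkeeping.
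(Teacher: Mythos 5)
Your overall strategy is sound and is essentially the standard argument (the paper itself gives no proof of this lemma, deferring to \cite[Appendix A]{DV}, so there is no internal proof to compare against): build a nondegenerate subspace $V_\vect$ of the claimed isometry type containing $\vect$, then invoke $V = V_\vect \boxplus V_\vect^{\perp}$. The reduction, the anisotropic case, and the isotropic-canonical case are all correct, and your restriction to $\vect \neq 0$ is in fact necessary for the statement to be true (for $\vect = 0$ in an alternating space the ``isotropic canonical'' clause would demand an $\I^2$ summand that does not exist).

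One step in the third case is false as written: from ``the two coordinate functionals are distinct'' you conclude they are linearly independent. Over $\F_2$ two \emph{distinct nonzero} functionals are indeed linearly independent, but $\phi_{\can} = b(\vect_{\can}, \underline{\ \ })$ can be the zero functional, namely exactly when $V$ is alternating and $\vect_{\can} = 0$. For instance with $V \simeq \cH$ and $\vect = (1,0)$ the map $\Phi$ has image $\{(0,0),(1,0)\}$ and is not surjective. Your conclusion nevertheless survives, because all you need is that $(1,0)$ lies in the image of $\Phi$: if $\phi_{\can} = 0$ the second coordinate vanishes identically and any $v$ with $b(\vect,v)=1$ (which exists by nondegeneracy) is automatically isotropic; if $\phi_{\can} \neq 0$ both functionals are nonzero and distinct, so your independence argument applies verbatim. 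So the proof is correct once this case is split in two, or once ``surjective'' is weakened to ``$(1,0)$ is attained.'' A final cosmetic point: the lemma's phrase ``anisotropic canonical vector'' should be read simply as ``anisotropic''; your trichotomy (anisotropic; isotropic and canonical; isotropic and non-canonical) is the correct one, since an anisotropic vector cannot lie in a copy of $\cH$.
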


\begin{remark}  \label{rem::isosub} Let $V \simeq \I^2 \boxplus \cH^n$ or $V \simeq \cH^n$ for $n \in \Z_{\geq 0}$, let $S \subset V$ be an isotropic subspace, and let  $\vect \in S$ be a nonzero vector. If we form the orthogonal decomposition $V_\vect \boxplus V_\vect^\perp$ as in Theorem \ref{lem::Decomposition}, then $S$ can be decomposed into a direct sum of  two subspaces contained in the components of the orthogonal direct sum, explicitly $S = S_\vect \boxplus S_{\vect}^\perp$ where $ \langle \vect \rangle = S_\vect  \subset V_{\vect}$ and  $S_{\vect}^\perp \subset V_{\vect}^\perp$. \end{remark}

\begin{remark} \label{rem::concan}
Suppose that $V \simeq \I^2 \boxplus \cH^n$ for $n \in \Z_{\geq 1}$ and that $S \subset V$ is an isotropic subspace. By Lemma \ref{lem:bilspace} $(2)(b)$, the canonical vector $\vect_\can \in V$ is isotropic and further for any $v \in S$ then $b(v, \vect_\can) = b(v,v) = 0$, and hence $S' = \langle \vect_\can \rangle \oplus S$ is an isotropic subspace of $V$. Therefore, $\vect_\can$ is contained in every maximal totally isotropic subspace of $V$. 
\end{remark}

For $q,n \in \Z_{\geq 0}$, let \begin{equation*} (q)_n \colonequals \prod_{i=1}^n (1-q^{-i}),\hspace{1cm} (q)_0 \colonequals 1 \end{equation*} be the $q$-Pochhammer symbol.

\begin{lemma} \label{lem::maxtotiso}
For $t \in \Z_{\geq 0}$, the number of maximal totally isotropic subspaces inside both the bilinear space $\cH^t$ and the bilinear space $\I^2 \boxplus \cH^t$  is 
\begin{eqnarray*} b(t) & \colonequals & 2^{(t+1)t/2} \cdot \frac{(4)_{t}}{(2)_{t}}.  \end{eqnarray*}
\end{lemma}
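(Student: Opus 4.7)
The plan is to simplify $b(t)$ algebraically into a product form, then count the maximal totally isotropic subspaces of $\cH^t$ by an inductive double count, and finally reduce the $\I^2 \boxplus \cH^t$ case to the $\cH^t$ case via the canonical vector. For the simplification, the factorization $1 - 4^{-i} = (1 - 2^{-i})(1 + 2^{-i})$ gives $(4)_t / (2)_t = \prod_{i=1}^t (1 + 2^{-i})$, so
\[
b(t) \;=\; 2^{t(t+1)/2} \prod_{i=1}^t (1 + 2^{-i}) \;=\; \prod_{i=1}^t (2^i + 1).
\]
This is the form I will aim to match by counting.

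Let $N_t$ denote the number of maximal totally isotropic subspaces of $\cH^t$; I would induct on $t$ with trivial base $N_0 = 1$. For the inductive step I would double count pairs $(\vect, S)$ with $S$ maximal totally isotropic and $0 \neq \vect \in S$. Since $\cH^t$ is alternating, every such $S$ has dimension $t$ (its Witt index) and contributes $2^t - 1$ pairs, while each of the $2^{2t} - 1$ nonzero vectors of $\cH^t$ is isotropic. Applying Lemma \ref{lem::Decomposition} at such a $\vect$ yields $\cH^t = V_\vect \boxplus V_\vect^\perp$ with $V_\vect \simeq \cH$, hence $V_\vect^\perp \simeq \cH^{t-1}$; Remark \ref{rem::isosub} then writes every maximal totally isotropic $S \ni \vect$ uniquely as $\langle \vect \rangle \boxplus S'$ with $S'$ maximal totally isotropic in $V_\vect^\perp$, giving exactly $N_{t-1}$ possibilities. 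Double counting yields $N_t(2^t - 1) = (2^{2t} - 1) N_{t-1}$, i.e.\ $N_t = (2^t + 1) N_{t-1}$, which telescopes to $N_t = \prod_{i=1}^t (2^i + 1) = b(t)$.

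For $V \simeq \I^2 \boxplus \cH^t$, Lemma \ref{lem:bilspace}(2)(b) makes $\vect_\can$ a nonzero isotropic vector, and Remark \ref{rem::concan} forces it to lie in every maximal totally isotropic subspace of $V$. Applying Lemma \ref{lem::Decomposition} to $\vect_\can$ produces $V = V_{\vect_\can} \boxplus V_{\vect_\can}^\perp$ with $V_{\vect_\can} \simeq \I^2$; since the canonical vector of an orthogonal direct sum is the sum of the canonical vectors of its summands, and $\vect_\can$ is visibly the canonical vector of $V_{\vect_\can}$, the complement $V_{\vect_\can}^\perp$ has trivial canonical vector, hence is alternating of dimension $2t$, hence isometric to $\cH^t$ by Lemma \ref{lem:bilspace}. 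Remark \ref{rem::isosub} applied with $\vect = \vect_\can$ then provides the bijection $S \mapsto S_{\vect_\can}^\perp$ between maximal totally isotropic subspaces of $V$ and those of $V_{\vect_\can}^\perp \simeq \cH^t$, so this count is again $b(t)$. The one mildly delicate point is confirming the isometry $V_{\vect_\can}^\perp \simeq \cH^t$, which I expect to be the main (though minor) hurdle; the one-line argument above via additivity of canonical vectors under orthogonal direct sums dispatches it.
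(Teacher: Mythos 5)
Your proof is correct, and for the $\cH^t$ count it takes a genuinely different route from the paper. The paper invokes the orbit--stabilizer computation from Dummit--Voight (Proposition A.6.1): the automorphism group of $\cH^t$ has order $2^{t^2}\prod_{i=1}^t(4^i-1)$, acts transitively on maximal totally isotropic subspaces, and the stabilizer of one such subspace has order $2^{t^2}\prod_{i=1}^t(2^i-1)$, so the count is the quotient $\prod_{i=1}^t(2^i+1)$ --- the same product form you derive algebraically from $b(t)$. Your inductive double count of pairs $(\vect,S)$, giving $N_t(2^t-1)=(2^{2t}-1)N_{t-1}$, reaches the identical recursion $N_t=(2^t+1)N_{t-1}$ while staying entirely self-contained: it needs only the Decomposition Lemma, the fact that $\cH^t$ is alternating, and that maximal totally isotropic subspaces of $\cH^t$ have dimension $t$, rather than importing the automorphism group orders and the transitivity statement from the reference. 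The trade-off is that the paper's argument generalizes immediately to the stabilizer computations needed in Lemma \ref{lem::maxtotisowithk}, whereas yours is more elementary but special to this count. For the $\I^2\boxplus\cH^t$ case both arguments are essentially the same reduction via Remarks \ref{rem::isosub}--\ref{rem::concan}; your explicit justification that $V_{\vect_\can}^\perp\simeq\cH^t$ via additivity of canonical vectors under orthogonal direct sums is a detail the paper leaves implicit, and it is handled correctly.
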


\begin{proof} By \cite[Proposition A.6.1]{DV}, there are $2^{t^2} \prod_{i=1}^t(4^i -1)$ automorphisms of $\cH^t$ which act transitively on the set of maximal totally isotropic subspaces and $ 2^{t^2} \prod_{i=1}^t(2^i -1)$ of these automorphisms stabilize a specific maximal totally isotropic subspace. This concludes the count for $\cH^t$. In the case of $\I^2 \boxplus \cH^t$, observe that by remarks \ref{rem::isosub}-\ref{rem::concan} the maximal totally isotropic subspaces are of the form $S = \langle \vect_{\can} \rangle \oplus T$ where $T \subset \cH^t$ is a maximal totally isotropic subspace. \end{proof}

 We now refine Lemma \ref{lem::maxtotiso} to provide a count of the total number of maximal totally isotropic subspaces with a fixed isotropy rank.
\begin{lemma} \label{lem::maxtotisowithk} For $n,m,k \in \Z_{\geq 0}$, let
\begin{eqnarray*} d(n,m,k) & \colonequals & \frac{2^{(2n+m+1)(2n+m)/2}}{2^{k(k+m)}} \cdot  \frac{(4)_{n} (4)_{n+m} }{(4)_{n-k}(2)_{k} (2)_{k + m}}. \end{eqnarray*} 

 \begin{enumerate}
 \item The number of maximal totally isotropic subspaces $S \subseteq \left ( \cH^n \right) \boxplus \left ( \cH^{n+m} \right )$\\
  with $\dim_{\F_2} S \cap \left ( \cH^n \right) = k$ is given by $d(n,m,k)$ for $0 \leq k \leq n$.
 \item The number of maximal totally isotropic subspaces $S \subseteq \left ( \I^2 \boxplus  \cH^n \right ) \boxplus \left( \cH^{n+m} \right)$ \\
 with $\dim_{\F_2} S \cap  \left ( \I^2 \boxplus  \cH^n \right ) = k$ is given by $d(n,m,k-1)$ for $1 \leq k \leq n+1$.

\item The number of maximal totally isotropic subspaces $S \subseteq \left ( \I^2 \boxplus  \cH^n \right ) \boxplus   \left ( \I^2 \boxplus \cH^{n+m}  \right )$ \\
with $\dim_{\F_2} S \cap  \left ( \I^2 \boxplus  \cH^n \right )  = k$ which:

\begin{enumerate}[(a)] 
 \item contains $(\vect_\can,0)$ is given by $d(n,m,k-1)$ for $1 \leq k \leq n+1$, and
 \item does not contain $(\vect_\can,0)$ is given by $2^{2n+m+1}d(n,m,k)$ for $0 \leq k \leq n.$
\end{enumerate}
\end{enumerate}
\end{lemma}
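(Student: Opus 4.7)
The plan is to parametrize maximal totally isotropic subspaces $S \subseteq V \boxplus W$ by triples $(U, U', \phi)$ where $U = S \cap V$, $U' = S \cap W$, and $\phi$ is an induced isometry between the nondegenerate quotients $U^{\perp_V}\!/U$ and $U'^{\perp_W}\!/U'$. Case (1) is the base; cases (2) and (3) reduce (partially) to it by peeling off canonical vectors that Remark \ref{rem::concan} forces into every max iso.

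For (1), in $\cH^n \boxplus \cH^{n+m}$ with $\dim S \cap \cH^n = k$, isotropy of $S$ combined with dimension counting forces $\dim U' = k+m$ and $\pi_V(S) = U^{\perp_V}$, $\pi_W(S) = U'^{\perp_W}$, so that $S$ is the graph of an induced isometry $\phi\colon U^{\perp_V}/U \xrightarrow{\sim} U'^{\perp_W}/U'$ of two copies of $\cH^{n-k}$. Conversely any such triple reconstructs $S$. The count is $N_k^{(n)} \cdot N_{k+m}^{(n+m)} \cdot |O(\cH^{n-k})|$, where $N_j^{(t)}$ denotes the number of $j$-dimensional isotropic subspaces of $\cH^t$; each factor is obtained from the transitive action of $O(\cH^\bullet)$ on isotropic subspaces of fixed dimension (cf.\ \cite[Prop.~A.6.1]{DV} and Lemma \ref{lem::maxtotiso}), and the product simplifies to $d(n,m,k)$ after a $q$-Pochhammer calculation.

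For (2), since $W = \cH^{n+m}$ is alternating, the canonical vector of $V \boxplus W$ equals $(\vect_{\can}^V, 0)$, which by Remark \ref{rem::concan} lies in every max iso $S$; quotienting $V \boxplus W$ by $\langle(\vect_{\can}^V,0)\rangle$ and restricting to its orthogonal complement yields a space isometric to $\cH^n \boxplus \cH^{n+m}$, reducing to case (1) with the intersection dimension shifted from $k$ to $k-1$, hence $d(n,m,k-1)$. For (3)(a), the canonical vector of $V \boxplus W$ is $u = (\vect_{\can}^V, \vect_{\can}^W)$, and the subcase hypothesis $(\vect_{\can}^V, 0) \in S$ together with $u \in S$ forces also $(0, \vect_{\can}^W) \in S$; the 2-dimensional isotropic subspace $U_0 := \langle(\vect_{\can}^V,0),(0,\vect_{\can}^W)\rangle$ satisfies $U_0^\perp/U_0 \cong \cH^n \boxplus \cH^{n+m}$, again reducing to (1) with a shift of $-1$.

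For (3)(b), I would invoke Lemma \ref{lem::Decomposition} to write $V \boxplus W = V_u \boxplus V_u^\perp$ with $V_u \simeq \I^2$ containing $u$ and $V_u^\perp \simeq \cH^{2n+m+1}$. A short argument using the alternating structure of $V_u^\perp$ shows $\pi_{V_u}(S) = \langle u\rangle$, giving a bijection $S \leftrightarrow T := S \cap V_u^\perp$ with max iso of $V_u^\perp$. Choosing a concrete $V_u = \langle u, e_1^V\rangle$, one computes $V \cap V_u^\perp \cong \cH^n$ inside $V_u^\perp \cong \cH^n \boxplus \cH^{n+m+1}$, and translates the constraints: the condition $\dim S \cap V = k$ becomes a condition on $\dim T \cap \cH^n$ shifted by $0$ or $1$ depending on whether a specific nonzero isotropic vector $\bar z \in \cH^{n+m+1}$ lies in the projection $\pi_{\cH^{n+m+1}}(T)$, while the subcase hypothesis $(\vect_{\can}^V,0) \notin S$ becomes $\bar z \notin T$. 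Splitting by whether $\bar z \in U'$, $\bar z \in U'^{\perp} \setminus U'$, or $\bar z \notin U'^{\perp}$ in the case-(1) parametrization of $T$ inside $\cH^n \boxplus \cH^{n+m+1}$, and counting in each regime, yields two contributions. The main obstacle will be collapsing their sum to exactly $2^{2n+m+1} d(n,m,k)$: this depends on the ratio identity $N_{j+1}^{(t)}/N_j^{(t)} = (4^{t-j}-1)/(2^{j+1}-1)$ together with $b(t+1)/b(t) = 2^{t+1}+1$. As a sanity check, at $n=m=0$ the space $V \boxplus W = \I^2 \boxplus \I^2$ has exactly three maximal totally isotropic subspaces, of which one contains $(\vect_{\can}^V,0)$ and two do not, matching $d(0,0,0) = 1$ and $2^{1} d(0,0,0) = 2$.
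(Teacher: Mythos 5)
Your treatment of parts (1), (2), and (3)(a) is correct and is in substance the paper's own argument: the triple $(U,U',\phi)$ parametrization of a maximal totally isotropic $S \subseteq V \boxplus W$ is exactly what the cited formula $\tfrac{\#\Aut(V)}{\#\Aut(V,S_V)}\,\#\Aut(\mathcal{K})\,\tfrac{\#\Aut(W)}{\#\Aut(W,S_W)}$ from [DV, Eq.\ A.19--A.20] encodes, and your quotient-by-$\langle(\vect_{\can},0)\rangle$ (resp.\ by $U_0$) reductions for (2) and (3)(a) are a clean way to realize the ``similar analysis'' the paper leaves implicit. Your sanity check on $\I^2 \boxplus \I^2$ is also right.

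The genuine gap is (3)(b). You set up a decomposition $V\boxplus W = V_u \boxplus V_u^\perp$, translate the constraints into a two-regime case split governed by the position of an auxiliary vector $\bar z$, and then explicitly defer the key step: you never show that the two contributions sum to $2^{2n+m+1}d(n,m,k)$, calling it ``the main obstacle.'' As written this is a framework, not a proof, and the framework makes the bookkeeping harder than it needs to be (the condition $\dim S\cap V = k$ does not pass cleanly to $T = S\cap V_u^\perp$, precisely because $V\not\subseteq V_u^\perp$). The direct route is to run the same $(U,U',\phi)$ parametrization you used in (1): in case (3)(b) one still gets $\dim S_W = k+m$ by the same squeeze, and the hypothesis $(\vect_{\can},0)\notin S$ forces $\vect_{\can}^V\notin S_V$ and $\vect_{\can}^W\notin S_W$ (since $(\vect_{\can}^V,\vect_{\can}^W)\in S$ always). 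The number of $k$-dimensional isotropic subspaces of $\I^2\boxplus\cH^n$ avoiding $\vect_{\can}^V$ is $2^k N_k^{(n)}$ (each $k$-dimensional isotropic subspace of $\vect_{\can}^\perp/\langle\vect_{\can}\rangle\simeq\cH^n$ has $2^k$ lifts not containing $\vect_{\can}$), likewise $2^{k+m}N_{k+m}^{(n+m)}$ for $W$; and the quotients $S_V^{\perp}/S_V\simeq S_W^{\perp}/S_W\simeq \I^2\boxplus\cH^{n-k}$ contribute $\#\Aut(\I^2\boxplus\cH^{n-k}) = 2^{2(n-k)+1}\,\#\Aut(\cH^{n-k})$. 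The product of the three correction factors is $2^{2k+m}\cdot 2^{2(n-k)+1} = 2^{2n+m+1}$, giving $2^{2n+m+1}d(n,m,k)$ at once. Either complete your two-regime computation or replace it with this direct count; as it stands, (3)(b) is unproved.
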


\begin{proof} Let $S \subset V \boxplus W$ be a maximal totally isotropic subspace of isotropy rank $k$ and $S_V \colonequals S \cap V$ and $S_W \colonequals S \cap W$. Let $\mathcal{K}$ be a vector space complement for  $S_V$ in $(S_V)^\perp$. Let $\Aut(V)$ be the automorphism group and $\Aut(V,S_V)$ the subgroup that stabilizes $S_V$. By \cite[Equation A.19-A.20]{DV}, the number of these subspaces is 
\[ \frac{\#\Aut(V) }{  \#\Aut(V,S_V)} \#\Aut(\mathcal{K} ) \frac{ \#\Aut(W)}{\#\Aut(W,S_W)}. \]
If $V \simeq \cH^n$ and $W \simeq \cH^{n+m}$, then by \cite[Theorem A.13 and Corollary A.17(i)]{DV} we have $0 \leq k \leq n$ and $\dim_{\F_2} S_W = k + m$. Hence by \cite[Proposition A.6.1]{DV}, the above quantity  is 
\begin{eqnarray*} \frac{ \prod_{i=1}^n(4^i -1)}{ \prod_{i=1}^k(2^i -1) \prod_{i=1}^{n-k}(4^i -1)} 2^{(n-k)^2} \prod_{i=1}^{n-k}(4^i -1) \frac{ \prod_{i=1}^{m+n}(4^i -1)}{  \prod_{i=1}^{k+m}(2^i -1) \prod_{i=1}^{n-k}(4^i -1)} \end{eqnarray*}
which proves case (1). The other cases can be done by a similar analysis. \end{proof}



We can now combine Lemma \ref{lem::Decomposition} and Lemma \ref{lem::maxtotisowithk} to prove Theorem \ref{thm::isoprobs}.


\begin{proof}[Proof of Theorem \ref{thm::isoprobs}]  

Throughout, let $S \subset V \boxplus W$ be a maximal totally isotropic subspace of isotropy rank $k$ and use Lemma \ref{lem::maxtotiso}-\ref{lem::maxtotisowithk} to count the total number of possibilities for $S$ (with isotropy rank $k$).  

For type A(i), the total number of possibilities for $S$ is $b(r_1 + r_2 -1)$ and the number of possibilities with isotropy rank $k$ is determined by Lemma \ref{lem::maxtotisowithk}(2) with $n =r_1/2 -1$ and $m = r_2 +1$. 

For type A(ii), use Lemma \ref{lem::Decomposition} and remark \ref{rem::isosub} to write $W \simeq \cH^{r_1/2 + r_2-1 } \boxplus \cH$ such that every maximal totally isotropic subspace $S$ containing $(0,w)$ is of the form $S = T \oplus \langle (0,w) \rangle$ where $T \subset V \boxplus \cH^{r_1/2 + r_2-1 }$ is a maximal totally isotropic subspace of isotropy rank $k$. The number of possibilities for $T$ is $b(r_1 + r_2 -2)$ and the number with isotropy rank $k$ is determined by Lemma \ref{lem::maxtotisowithk}(2) with $n = r_1/2 -1$ and $m = r_2$. 

For type B(i), the total number of possibilities is $b(r_1 + r_2 -1)$ and the number of possibilities with isotropy rank $k$ is determined by Lemma \ref{lem::maxtotisowithk}(3) using $n = r_1/2 -1$ and $m = r_2$. For type B(ii), use Lemma \ref{lem::Decomposition} and remark \ref{rem::isosub} to instead count maximal totally isotropic subspaces $T \subset V \boxplus \left ( \I^2 \boxplus \cH^{r_1/2 + r_2-2} \right )$ with isotropy rank $k$. The total number of possibilities is $b(r_1 + r_2 -2)$ and we divide the number of possibilities with isotropy rank $k$ into two cases:
\begin{enumerate}
\item If $r_2 = 0$, swap the components such the $T \subset \left ( \I^2 \boxplus \cH^{r_1/2 - 2} \right ) \boxplus \left ( \I^2 \boxplus \cH^{r_1/2 - 1} \right )$ and then use Lemma \ref{lem::maxtotisowithk}(3) with isotropy rank $k'$ where $k' = k - 1$ by \cite[Theorem A.13]{DV}. 
\item If $r_2 > 0$,  use Lemma \ref{lem::maxtotisowithk}(3) with $n = r_1/2 - 1$ and $m = r_2-1$.
\end{enumerate}
For type B(iii),  the total number of possibilities is $b(r_1 + r_2 -2)$  and the number of possibilities with isotropy rank $k$ is determined by Lemma \ref{lem::maxtotisowithk}(3)(a) with $n = r_1/2 -1$ and $m = r_2$. For type B(iv), by Lemma \ref{lem::Decomposition} and remark \ref{rem::isosub} we instead count maximal totally isotropic subspaces $T \subset \cH^{r_1/2 - 1}  \boxplus \cH^{r_1/2 + r_2-2} $ of isotropy rank $k-1$. The total number of possibilities is $b(r_1 + r_2 -3)$ and the number with isotropy rank $k-1$ is divided into two cases:
\begin{enumerate}
\item If $r_2 = 0$, swap the components such that $T \subset  \cH^{r_1/2 - 2} \boxplus \cH^{r_1/2 - 1} $ and then use Lemma  \ref{lem::maxtotisowithk}(1) with isotropy rank $k'$ where $k' = k - 2$ by \cite[Theorem A.13]{DV}.
\item If $r_2 > 0$, use Lemma \ref{lem::maxtotisowithk}(1) with $n = r_1/2 -1$ and $m = r_2 -1$. 
\end{enumerate} 
This concludes the proof.\end{proof}

\section{Conjectures: $\Q$-imprimitive part of the 2-Selmer group} \label{Section::4}
We provide predictions for the asymptotic density of $S_n$-number fields with a given $\Q$-imprimitive type which stem from heuristics \cite{Bhar} local behaviors for $S_n$-number field. 

\subsection{Heuristics on local behaviors}

Let $\FK$ be a number field and define the local algebras 
\begin{itemize}
\item $\FK_\infty \colonequals \FK \otimes_\Q \R$, and 
\item $\AlF \colonequals \FK \otimes_\Q \Q_p$ for a prime $p \in \Z$. 
\end{itemize} 
For nonarchimedean local algebras, we write $\AlK \simeq \prod_{i=1}^r \FK_i$ where each $\FK_i$ is a finite extension of $\Q_p$.  The discriminant of a local algebra is $\Disc_\infty(\FK_\infty) \colonequals 1$ for archimedean local algebras and $\Disc_p(F_p) \colonequals  \prod_{i=1}^{r} \Disc_p(\FK_i)$ for nonarchimedean local algebras where $\Disc_p(\FK_i/\Q_p)$ denotes the highest power of $p$ that divides $\Disc(F_i/\Q_p)$.  

 As $F$ varies over $S_n$-number fields ordered by absolute discriminant, Bhargava  \cite{Bhar, Bhar1, Bhar2, Bhar4} provides conjectural results on the density of fields with any finite number of specifications (and \emph{acceptable} infinite sets of specifications \cite[Theorem 1.3]{Bhar4}) on the local algebras $F \otimes_\Q \Q_v$ for the places $v$ of $\Q$.  His conjectures are based on two assumptions:
  \begin{itemize} \item[(H2)] As $F$ varies over $S_n$-number fields ordered by absolute discriminant,  the density of fields with $F \otimes_\Q \Q_v \simeq F_v$ for a fixed local algebra $F_v$ is proportional to \[ \frac{1}{\Disc_v (F_v)} \cdot  \frac{1}{\# \Aut_{\Q_v} (F_v)}. \] 
   \item[(H3)] There is an independence of places i.e.  for two places $v \neq v'$ the fact that $F \otimes_\Q \Q_{v} \simeq F_v$ has no effect on the possibilities for $F \otimes_\Q \Q_{v'}$. \end{itemize} 
 
Let $c(n,p)$ be the weighted count for the total number of $\Q_p$-algebras of degree $n$ (up to isomorphism) where each algebra is weighted inversely proportional to $\Disc_p(\AlK)$ and $\# \Aut_{\Q_p}(\AlK)$.  Bhargava proves a mass formula \cite[Theorem 1.1]{Bhar} which establishes that \begin{equation} \label{eqn::density} c(n,p)  =  \sum_{k =0}^{n-1} q(k,n-k)p^{-k}  \end{equation}  where $q(k,n-k)$ denotes the number of partitions of $k$ into at most $n-k$ parts. Combining this result with the heuristic assumptions (H2) and (H3) yields his following main conjecture. 
\begin{conjecture}[Bhargava] \label{conj::mainconj} Let $\Sigma := (\Sigma_2,\Sigma_3, \Sigma_5, \dots)$ be an acceptable collection of local specifications where $\Sigma_p$ is a set of $\Q_p$-algebras. As $F$ varies over $S_n$-number fields with fixed signature $(r_1,r_2)$ ordered by absolute discriminant, then 
\begin{eqnarray*} \Prob(F  \mid F \otimes_\Q \Q_p \in \Sigma_p) = \prod_{p}  \frac{1}{c(n,p)} \left ( \sum_{\AlF \in \Sigma_p}  \frac{1}{\Disc(\AlF)} \cdot \frac{1}{\# \Aut_{\Q_p}(\AlF)} \right ) . \end{eqnarray*}
\end{conjecture}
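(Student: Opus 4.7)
The plan is to derive the product formula as a formal consequence of Bhargava's mass formula \eqref{eqn::density} together with the two heuristic assumptions (H2) and (H3); the substantive content sits entirely in those hypotheses, so the ``proof'' is essentially a normalization step followed by an application of independence.

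First I would work at a single prime $p$. By (H2), as $F$ varies over $S_n$-number fields of fixed signature ordered by absolute discriminant, the density of fields with $F \otimes_\Q \Q_p \simeq F_p$ is proportional to $1/\bigl(\Disc_p(F_p)\cdot \#\Aut_{\Q_p}(F_p)\bigr)$. Since these densities must sum to $1$ as $F_p$ ranges over the isomorphism classes of degree-$n$ $\Q_p$-algebras, the normalizing constant is the reciprocal of $\sum_{F_p}\frac{1}{\Disc_p(F_p)\cdot \#\Aut_{\Q_p}(F_p)}$, which by \eqref{eqn::density} equals $c(n,p)$. Summing over $F_p\in\Sigma_p$ then gives the single-prime formula
\[ \Prob(F\otimes_\Q\Q_p\in\Sigma_p) \;=\; \frac{1}{c(n,p)} \sum_{F_p\in\Sigma_p} \frac{1}{\Disc_p(F_p)\cdot \#\Aut_{\Q_p}(F_p)}. \]

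Next I would invoke (H3) at a finite set of primes $T$: independence forces the joint probability to factor as the product of the single-prime probabilities computed above, and at primes $p \notin T$ no constraint is imposed, so $\Sigma_p$ is the full set of degree-$n$ $\Q_p$-algebras and the corresponding factor is exactly $1$ by the mass formula. This establishes the conjectural formula for any specification $\Sigma$ that differs from the trivial one at only finitely many primes.

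The genuinely hard step, and the reason this remains a conjecture rather than a theorem, is passing from finite to infinite collections of local conditions. One must interchange the limit defining the density (as the absolute-discriminant bound grows) with the infinite product over $p$, which requires a uniform-in-$p$ tail bound strictly stronger than the pointwise statement (H2). The \emph{acceptability} hypothesis on $\Sigma$ is designed precisely to make this interchange legitimate by ruling out specifications for which the mass contributed at large primes is nonnegligible; a rigorous treatment would parallel the approach of \cite[Theorem 1.3]{Bhar4}, bounding the contribution of primes $p > X$ uniformly in the discriminant bound and letting $X\to\infty$ after taking the discriminant-bound limit. In the present paper the conjecture is taken as input, so only the formal derivation outlined above is required.
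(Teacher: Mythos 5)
Your derivation matches the paper's (implicit) one: the paper simply states that the conjecture follows by combining the mass formula \eqref{eqn::density} with the heuristic assumptions (H2) and (H3), which is exactly the normalization-plus-independence argument you spell out. Your additional remarks on the role of acceptability in passing to infinitely many local conditions are a correct elaboration of why the statement remains conjectural, consistent with the paper's citation of \cite[Theorem 1.3]{Bhar4}.
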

Conjecture \ref{conj::mainconj}, and assumption (H2) and (H3) have been proved for $S_n$-number fields of degree $n \leq 5$ by \cite{Bhar1, Bhar2, Bhar3}. These heuristics for local different local behaviors will form the backbone for our predictions on $\Q$-imprimitive types. 

 \subsection{Conjectures: $\Q$-imprimitive types}
 
 We start with the density of $S_n$-number fields with a fixed prime $p$ in the 2-Selmer group. 
 
 \begin{conjecture} \label{conj::pinSel2} Let $p \in \Z$ be a prime.  As $F$ varies over $S_n$-number fields of degree $n = 2m$ with signature $(r_1,r_2)$ ordered by absolute discriminant,  then
 \[ \Prob( F \: | \: p \in \Sel_2^\Q(F) ) =  \frac{c(m,p)}{c(2m,p)}  \cdot \frac{1}{p^{m}}. \] 
  \end{conjecture}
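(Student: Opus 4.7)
The plan is to combine Bhargava's Conjecture \ref{conj::mainconj} with a local mass-formula identity at $p$.

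First, I reduce to a purely local condition. By Proposition \ref{prop:reps}, the event $p \in \Sel_2^\Q(\FK)$ is equivalent to $(p) = \fraka^2$ in $\cO_\FK$ for some fractional ideal $\fraka$, which is equivalent to each prime of $\FK$ above $p$ having even ramification index. This depends only on the local algebra $\AlF \colonequals \FK \otimes_\Q \Q_p$: writing $\AlF = \prod_i \FK_i$, the condition becomes that each $\FK_i/\Q_p$ has even ramification index. Letting $\Sigma_p^{\mathrm{sq}}$ denote the set of degree-$2m$ \'etale $\Q_p$-algebras satisfying this property and taking $\Sigma_q$ unrestricted for every prime $q \neq p$, Conjecture \ref{conj::mainconj} yields
\[ \Prob(\FK \mid p \in \Sel_2^\Q(\FK)) = \frac{1}{c(2m,p)} \sum_{\AlF \in \Sigma_p^{\mathrm{sq}}} \frac{1}{\Disc_p(\AlF) \cdot \#\Aut_{\Q_p}(\AlF)}. \]
The conjecture thus reduces to the local mass identity
\[ M^{\mathrm{sq}}(2m,p) \colonequals \sum_{\AlF \in \Sigma_p^{\mathrm{sq}}} \frac{1}{\Disc_p(\AlF) \cdot \#\Aut_{\Q_p}(\AlF)} = \frac{c(m,p)}{p^m}. \]

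Next, I pass to exponential generating functions. The factorization $\AlF = \prod_i \FK_i$, together with the multiplicativity of $\Disc_p$ and the symmetric-product structure of $\#\Aut_{\Q_p}$, yields
\[ \sum_{n \geq 0} c(n,p)\, x^n = \exp\biggl(\sum_{K/\Q_p \text{ field}} \frac{x^{[K:\Q_p]}}{\Disc(K) \cdot \#\Aut(K/\Q_p)}\biggr), \]
and an analogous formula for $\sum_n M^{\mathrm{sq}}(n,p)\, x^n$ with the inner sum restricted to local fields $K$ having even ramification index. Matching coefficients of $x^{2m}$, the desired mass identity is equivalent to the field-level equality
\[ \sum_{\substack{K/\Q_p \text{ field} \\ [K:\Q_p] = 2m,\ e \text{ even}}} \frac{1}{\Disc(K) \cdot \#\Aut(K/\Q_p)} = \frac{1}{p^m} \sum_{\substack{L/\Q_p \text{ field} \\ [L:\Q_p] = m}} \frac{1}{\Disc(L) \cdot \#\Aut(L/\Q_p)}. \]

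I would prove the field-level identity by grouping each $K$ on the left by the degree-$m$ subfield $L \subseteq K$ with $K/L$ ramified quadratic. In the tame case ($p \nmid 2m$), such an $L$ exists uniquely by taking $L = K_0(\pi^2)$, where $K_0$ is the maximal unramified subfield of $K$ and $\pi$ is a uniformizer of $K$. The conductor-discriminant formula gives $\Disc(K/\Q_p) = \Disc(L/\Q_p)^2 \cdot p^{f}$ with $f = f(L/\Q_p)$, and combined with the tame identity $v_p(\Disc L) = m - f$ this exactly produces the required $p^{-m}$ factor; the matching of automorphism factors $\sum_{K \mapsto L} 1/\#\Aut(K/\Q_p) = 1/\#\Aut(L/\Q_p)$ follows from an orbit-counting argument for the $\Aut(L/\Q_p)$-action on the two ramified quadratic extensions of $L$, using that an $L$-automorphism stabilizing $K$ lifts to $\#\Aut(K/L) = 2$ distinct $\Q_p$-automorphisms of $K$. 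The main obstacle is wild ramification, most severely at $p = 2$: the subfield $L = K_0(\pi^2)$ may fail to have degree $m$, the exponent $v_L(\Disc(K/L))$ is no longer uniformly equal to $1$, and the set of ramified quadratic extensions of $L$ is larger and must be weighted by the precise conductor of each quadratic character. I would resolve this either by using local class field theory to enumerate the quadratic extensions of $L$ by conductor and verifying that the weighted sum still yields the required $p^{-m}$ factor, or by working on the Galois side and parametrizing both sides by $\Gal(\overline{\Q}_p/\Q_p)$-sets of sizes $2m$ and $m$ with prescribed inertia structure.
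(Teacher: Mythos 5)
Your reduction to the local mass identity $\sum_{\AlF \in \Sigma_p^{\mathrm{sq}}} 1/(\Disc_p(\AlF)\#\Aut_{\Q_p}(\AlF)) = c(m,p)p^{-m}$ via Conjecture \ref{conj::mainconj} is exactly the paper's first step. From there the two arguments diverge: the paper never descends to the level of individual local fields, but instead groups algebras by their splitting symbols, observes that ``all ramification indices even'' is equivalent to ``the associated partition has all parts odd,'' applies Bhargava's partition-level mass formula (Theorem \ref{thm::Bharparts}) so that each such partition of $k_0$ contributes mass $p^{-k_0}$, and then uses the combinatorial bijection $\Psi$ of Lemma \ref{lem::bijection} between partitions of $k_0$ into exactly $2m-k_0$ odd parts and partitions of $k_0-m$ into at most $m-(k_0-m)$ parts to re-sum and obtain $c(m,p)p^{-m}$ directly. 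Your route via the exponential generating function $\sum_n c(n,p)x^n = \exp(\sum_K x^{[K:\Q_p]}/(\Disc(K)\#\Aut(K)))$ is valid and correctly reduces the problem to the field-level identity you state, which is in fact true.

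The genuine gap is your proof of that field-level identity: the subfield construction $L = K_0(\pi^2)$ and the conductor-discriminant bookkeeping are only carried out in the tame case, and for $p=2$ (precisely the prime that matters most for this paper) you offer two possible strategies without executing either. The good news is that the wild case requires none of this machinery: the identity follows in one line from the mass formula for local fields with prescribed ramification and inertia. By Proposition \ref{prop::counts} applied with $F = \Q_p$ (equivalently, Serre's mass formula over the unramified subfield, which is valid with no tameness hypothesis), the mass of degree-$ef$ fields $K/\Q_p$ with inertia degree $f$ and ramification index $e$ is $\tfrac{1}{f}\,p^{-f(e-1)}$. Hence for each factorization $e'f = m$, the mass of degree-$2m$ fields with invariants $(2e',f)$ is $\tfrac{1}{f}\,p^{-f(2e'-1)} = p^{-e'f}\cdot\tfrac{1}{f}\,p^{-f(e'-1)} = p^{-m}$ times the mass of degree-$m$ fields with invariants $(e',f)$; summing over all $(e',f)$ gives your identity with no case analysis and no need to exhibit an actual degree-$m$ subfield. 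I recommend replacing your subfield argument with this computation; with that substitution your derivation is complete and constitutes a legitimate alternative to the paper's partition-bijection argument.
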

  Let $S$ be finite set of primes and let
$T$ be a finite (\emph{or infinite} when $n > 2$) set  of primes  that is disjoint from $S$.  Let $X_{S,T}$ be the collection of $S_n$-number fields for which $p \in \Sel_2^\Q(F)$ for every prime $p \in S$ and $q \not \in \Sel_2^\Q(F)$ for every prime $q \in T$. 
  \begin{conjecture} \label{conj::psinSel2} As $F$ varies over $S_n$-number fields of degree $n = 2m$ with signature $(r_1,r_2)$ ordered by absolute discriminant, then 
\begin{eqnarray*} \Prob( F \: | \: F \in X_{S,T} ) & = &  \prod_{p \in S}  \left ( \frac{c(m,p)}{c(2m,p)}  \cdot \frac{1}{p^{m}} \right ) \prod_{q \in T}    \left (1 - \frac{c(m,q)}{c(2m,q)}  \cdot \frac{1}{q^{m}} \right ). \end{eqnarray*} 
 \end{conjecture}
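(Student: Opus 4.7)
The plan is to derive Conjecture \ref{conj::psinSel2} as a direct corollary of Conjecture \ref{conj::pinSel2} together with the independence-of-places hypothesis (H3), both of which are built into Bhargava's Conjecture \ref{conj::mainconj}. The first step is to verify that the condition $p \in \Sel_2^\Q(F)$ depends only on the local algebra $F \otimes_\Q \Q_p$. By Proposition \ref{prop:reps}, the prime $p$ lies in $\Sel_2^\Q(F)$ exactly when $(p) = \fraka^2$ for some fractional ideal $\fraka$ of $F$; writing $(p)\cO_F = \prod_i \fp_i^{e_i}$, this is equivalent to requiring that every ramification index $e_i$ be even, a condition visibly determined by the decomposition $F \otimes_\Q \Q_p \simeq \prod_i F_i$. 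Let $\Sigma_p^{\mathrm{sq}}$ denote the set of such $\Q_p$-algebras of degree $2m$; then $F \in X_{S,T}$ if and only if $F \otimes_\Q \Q_p \in \Sigma_p$ for every prime $p$, where $\Sigma_p = \Sigma_p^{\mathrm{sq}}$ for $p \in S$, $\Sigma_p$ is the complement of $\Sigma_p^{\mathrm{sq}}$ for $p \in T$, and $\Sigma_p$ is the full set of $\Q_p$-algebras of degree $2m$ otherwise.

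Next I would apply Conjecture \ref{conj::mainconj} to express $\Prob(F \in X_{S,T})$ as the Euler product
\[ \prod_p \frac{1}{c(2m,p)} \sum_{F_p \in \Sigma_p} \frac{1}{\Disc_p(F_p) \cdot \#\Aut_{\Q_p}(F_p)}. \]
For primes $p \notin S \cup T$, this factor equals $1$ by Bhargava's mass formula \eqref{eqn::density}. For $p \in S$, Conjecture \ref{conj::pinSel2} identifies the local factor with $\frac{c(m,p)}{c(2m,p)} \cdot \frac{1}{p^m}$; for $q \in T$, taking the complementary local set and applying Conjecture \ref{conj::pinSel2} again yields $1 - \frac{c(m,q)}{c(2m,q)} \cdot \frac{1}{q^m}$. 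Assembling the factors produces the claimed formula.

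The main obstacle will be the case of infinite $T$, for which Conjecture \ref{conj::mainconj} must be invoked in the refined form of \cite[Theorem 1.3]{Bhar4} for \emph{acceptable} collections of local specifications. To establish acceptability, I would bound the local ``defect'' $\frac{c(m,q)}{c(2m,q)} \cdot \frac{1}{q^m}$ by $O(1/q^m)$, which follows at once from the estimate $c(k,q) = 1 + O(1/q)$ read off from \eqref{eqn::density}. Provided $m \geq 2$, the tail of the Euler product is then dominated by a convergent factor of $\zeta(m)$-type, justifying the infinite product over all primes and accounting for the stipulation ``finite, or infinite when $n > 2$'' in the hypothesis. With these inputs in place, the derivation is entirely formal.
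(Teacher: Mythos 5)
Your proposal is correct and follows essentially the same route as the paper: the condition $p \in \Sel_2^\Q(F)$ is translated into the purely local condition that $F \otimes_\Q \Q_p$ has all ramification indices even, the single-prime mass computation underlying Conjecture \ref{conj::pinSel2} supplies each local factor, and the independence hypothesis (H3), packaged in Conjecture \ref{conj::mainconj}, assembles these into the stated Euler product. Your explicit check of acceptability for infinite $T$ via the bound $\frac{c(m,q)}{c(2m,q)}\cdot q^{-m} = O(q^{-m})$ is a detail the paper leaves implicit, and it correctly explains the restriction to $n > 2$ in that case.
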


When $n >2$,  Conjecture \ref{conj::psinSel2} can be used to predict the density of $S_n$-number fields whose $\Q$-imprimitive part of the Selmer group is generated by -1 and \emph{any finite set} of primes $\{p_1, \dots, p_s \}$.  This differs from the case of quadratic fields, where the $\Q$-imprimitive part of the Selmer group is generated by -1 and the primes dividing the fundamental discriminant.  We now consider the $S_n$-number fields with $2 \in \Sel_2^\Q(\FK)$ and refine our predictions to include the image of $-1$ and $2$ in the 2-Selmer signature space.

\begin{conjecture} \label{conj::unramifiedquadraticextensions} As $F$ varies over $S_n$-number fields of degree $n = 2m$ and signature $(r_1,r_2)$ with $2 \in \Sel_2(F)$ ordered by absolute discriminant, then 
\begin{eqnarray*}
\Prob( F \: | \: \sgn_2(-1) = 0) & = &  \frac{c(m,2)}{c(2m,2)} \cdot \frac{1}{2^{2m}}; \\
\Prob( F \: | \: \sgn_2(2) = 0) & = & \frac{c(m,2)}{c(2m,2)}  \cdot \frac{1}{2^{3m}}; \\
\Prob( F \: | \: \sgn_2(-2) = 0) & = & \frac{c(m,2)}{c(2m,2)} \cdot \frac{1}{2^{3m}}.  \end{eqnarray*} 
These possibilities are mutually exclusive when $4 \nmid n$. Otherwise, set $n = 4m'$  and then 
\begin{eqnarray*}
\Prob ( F \: | \: \sgn_2(-1) = 0 \text{ and } \sgn_2(2) = 0 ) & =  & 
\frac{c(m',2)}{c(4m',2)}  \cdot \frac{1}{2^{8m'}}.
 \end{eqnarray*} 
 \end{conjecture}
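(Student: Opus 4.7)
The plan is to derive the formulas by applying Bhargava's conjecture (Conjecture~\ref{conj::mainconj}) to reduce each probability to a local mass computation at the prime $2$. Every condition in the statement---$2 \in \Sel_2(F)$, $\sgn_2(-1) = 0$, $\sgn_2(\pm 2) = 0$---is purely local at $2$, depending only on the étale $\Q_2$-algebra $F_2 \colonequals F \otimes_\Q \Q_2$. Under heuristic assumptions (H2) and (H3), each probability equals $M(\Sigma)/c(2m,2)$, where
\[ M(\Sigma) \colonequals \sum_{F_2 \in \Sigma} \frac{1}{\Disc(F_2) \cdot \#\Aut_{\Q_2}(F_2)} \]
and $\Sigma$ is the collection of degree-$2m$ étale $\Q_2$-algebras satisfying the relevant local condition. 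The task reduces to computing $M(\Sigma)$ in each case.

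The core step is a characterization I would establish first: for each place $v \mid 2$, the condition $\sgn_2(\varepsilon) = 0$ at $v$ is equivalent to $F_v(\sqrt{\varepsilon})/F_v$ being an unramified extension of local fields (allowing the trivial extension). This should follow from the description $V_2(F_v) \cong \mathcal{O}_v^\times/(1+4\mathcal{O}_v)\mathcal{O}_v^{\times 2}$ together with local class field theory, identifying the signature space as dual via the Hilbert symbol to the subgroup of $F_v^\times/F_v^{\times 2}$ cutting out unramified quadratic extensions. As a sanity check, one verifies directly that $\sgn_2(-1) = 0$ holds in $\Q_2(\sqrt{3})$ precisely because $\Q_2(\sqrt{3}, i)/\Q_2(\sqrt{3})$ is unramified---even though $i \notin \Q_2(\sqrt{3})$. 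Granting this characterization, I would set up a mass-preserving correspondence between degree-$2m$ $\Q_2$-algebras $F_2$ satisfying $\sgn_2(\varepsilon) = 0$ and degree-$m$ $\Q_2$-algebras $E_2$ via a \emph{twist by} $K_\varepsilon \colonequals \Q_2(\sqrt\varepsilon)$: every such $F_2$ arises as $F_2 \cong E_2 \otimes_{\Q_2} K_\varepsilon$ (up to careful accounting when components of $E_2$ already contain $K_\varepsilon$). The tower discriminant formula gives $\Disc(F_2) = \Disc(E_2)^2 \cdot \Disc(K_\varepsilon)^m$, and combined with Bhargava's mass formula $\sum_{E_2}1/(\Disc(E_2) \cdot \#\Aut(E_2)) = c(m, 2)$ (equation~(\ref{eqn::density})) and the identities $\Disc(\Q_2(i)) = (4)$ and $\Disc(\Q_2(\sqrt{\pm 2})) = (8)$, this will yield the three stated probabilities.

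For mutual exclusivity when $4 \nmid n$: if $\sgn_2(-1) = 0$ and $\sgn_2(2) = 0$ both hold at a place $v \mid 2$, then both $F_v(i)/F_v$ and $F_v(\sqrt{2})/F_v$ are unramified, so their compositum $F_v(\zeta_8)/F_v$ is also unramified. Since $\Q_2(\zeta_8)/\Q_2$ is totally ramified of degree $4$, this forces $4 \mid [F_v : \Q_2]$ at every place $v \mid 2$, hence $4 \mid n$. In the case $n = 4m'$, I would handle the joint case with the analogous twist argument using $K = \Q_2(\zeta_8)$ (whose discriminant is $(2^8)$), obtaining $M(\Sigma) = c(m', 2)/2^{8m'}$ and hence the stated joint probability. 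The main obstacle will be making the twist correspondence $F_2 \leftrightarrow E_2$ precise at the level of isomorphism classes of étale algebras: the ``unramified extension'' characterization is clean, but encoding it as a mass-preserving bijection requires carefully tracking how discriminant ideals and automorphism multiplicities transform under base change, particularly in degenerate configurations (e.g., when components of $E_2$ themselves contain $K_\varepsilon$, forcing additional $\Z/2$-symmetries that must be divided out of the mass count).
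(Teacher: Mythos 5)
Your reduction to a local mass computation at $2$ via Bhargava's heuristics, the characterization of $\sgn_2(\varepsilon)=0$ as ``$F_v(\sqrt{\varepsilon})/F_v$ is unramified or split at every $v\mid 2$,'' and the $\zeta_8$ argument for mutual exclusivity when $4\nmid n$ are all sound and agree with what the paper does. The gap is in the central counting step: the claim that every degree-$2m$ algebra $F_2$ satisfying the condition arises as $E_2\otimes_{\Q_2}K_\varepsilon$ is false, and the failure is not confined to the degenerate configurations you flag. Already for $m=1$ and $\varepsilon=-1$, the field $\Q_2(\sqrt{-5})$ satisfies $\sgn_2(-1)=0$ (adjoining $i$ gives $\Q_2(\sqrt{-5},\sqrt{5})$, which is unramified over $\Q_2(\sqrt{-5})$), but it is not $\Q_2\otimes_{\Q_2}\Q_2(i)$; the twist image has mass $1/8$ while the correct mass is $c(1,2)/4=1/4$. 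For $\varepsilon=2$, the field $\Q_2(2^{1/4})$ contains $\sqrt{2}$ and hence satisfies the condition, but is not of the form $E\otimes_{\Q_2}\Q_2(\sqrt{2})$ for any quadratic \'etale $E$ (those are either split or biquadratic Galois, and have discriminant exponent at most $8$, whereas $\Disc_2(\Q_2(2^{1/4}))=2^{11}$). The discriminant identity $\Disc(F_2)=\Disc(E_2)^2\Disc(K_\varepsilon)^m$ also fails under wild ramification: for $F_2=\Q_2(\zeta_8)=\Q_2(i)\otimes_{\Q_2}\Q_2(\sqrt{2})$ it predicts $4^2\cdot 8^2=2^{10}$, whereas $\Disc_2(\Q_2(\zeta_8))=2^8$. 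So carried out literally, your correspondence undercounts and does not reproduce the stated formulas.

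The paper's mechanism is different precisely where yours breaks. A component field $K$ with inertia degree $f$ and ramification index $2e$ satisfying the condition must \emph{contain} one of two specific ramified quadratic base fields: $\Q_2(\sqrt{d})$ when $K[x]/(x^2-d)\simeq K\times K$, or $F_{un}(\sqrt{d\alpha})$ (with $F_{un}$ the maximal unramified subfield of $K$ and $\alpha$ generating its unramified quadratic extension) when $K[x]/(x^2-d)\simeq K_{un}$. One then counts \emph{all} totally ramified extensions of the appropriate unramified extension of that Galois base using Serre's mass formula over a base field (Propositions \ref{prop::countsram} and \ref{prop::counts}); this captures fields like $\Q_2(2^{1/4})$ that are not base changes, and the two cases contribute equal mass, supplying the factor your computation misses (Lemma \ref{lem::unramextension}). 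Reassembling components by splitting symbol and applying the bijection $\Phi\colon \Spl(2m)_{\text{even}}\to\Spl(m)$ of Lemma \ref{lem::bijection} then yields the total mass $c(m,2)/\Disc_2(\Q_2(\sqrt{d}))^m$ (Theorem \ref{thm::unrammifiedalgebras}). If you want to keep a ``relative'' picture, the correct slogan is that the condition forces a prescribed ramified quadratic (or, in the joint case, biquadratic) subfield of each component, not that $F_2$ is induced from a half-degree algebra.
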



Combining Conjectures \ref{conj::pinSel2}- \ref{conj::unramifiedquadraticextensions} yields the following  main conjecture on $\Q$-imprimitive types. 

\begin{conjecture} \label{conj:fullclass}
Let
\begin{eqnarray*}
a & =  &\Prob(F \: | \: 2 \in \Sel_2^\Q(F) ) \\
b & = & \Prob(F \: | \:  p \not \in \Sel_2^\Q(F) \text{ for all primes  } p \equiv 3 \text{ mod } 4 ) \\
c & = & \Prob( F \: | \: F(\sqrt{-1})/F \text{ is unramified above }2)   \\
d & = & \Prob(F \: | \:  F(\sqrt{2})/F \text{ is unramified above }2) \\
e & = & \Prob(F \: | \: F(\sqrt{-1},\sqrt{2})/F \text{ are unramified above }2) 
\end{eqnarray*} 
according to the formulas in Conjecture $\ref{conj::pinSel2}$ and $\ref{conj::unramifiedquadraticextensions}$.  As $F$ varies over $S_n$-number fields of degree $n$ with signature $(r_1,r_2)$ ordered by absolute discriminant,  then:
\def\arraystretch{1.3}
\normalfont 
\begin{center}
\begin{tabular}{ l | l | l  }
 Type \:\:  & Basis for $S^\Q$ & Density among $S_n$-number fields\\ \hline  \hline  
A(i) & $(v_{\can}, 0)$ & $e$  \\  \hline
A(ii) &  $(v_{\can}, 0), (0,w)$ & $c-e$   \\  \hline
B(i) &  $(v_{\can}, v_{\can})$ & $(1-a+d-e)(b)$   \\  \hline
B(ii) &  $(v_{\can}, v_{\can}), (0, w)$ &  $(a-c-2d+2e)(b)$  \\  \hline
B(iii) &  $(v_{\can}, v_{\can}), (0, v_{\can})$ &  $(1-a+d-e)(1-b) + (d -e)$ \\  \hline
B(iv)&  $(v_{\can}, v_{\can}), (0, v_{\can}), (0,w)$ &  $(a-c-2d+2e)(1-b)$ \\ \hline
\end{tabular}
\end{center}
\end{conjecture}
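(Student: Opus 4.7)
The plan is to read the densities directly off of Theorem \ref{thm::Q-types} by treating each possible $\Q$-imprimitive type as a Boolean combination of five elementary events, and then to compute the probability of each such combination using Conjectures \ref{conj::pinSel2} and \ref{conj::unramifiedquadraticextensions} together with the heuristic independence of places (H3). Concretely, the five events relevant for the classification table are: (i) existence of some odd prime $p\equiv 3\pmod 4$ in $\Sel_2^\Q(F)$, (ii) $2\in\Sel_2^\Q(F)$, (iii) $\sgn_2(-1)=0$, (iv) $\sgn_2(2)=0$, and (v) $\sgn_2(-2)=0$. Event (i) depends only on odd places and contributes the factor $b$ or $1-b$; the remaining four events depend only on the prime $2$, so by (H3) they are independent of (i).

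Next I would interpret $c,d,e$ as probabilities of the local events at $2$: $c$ is the probability of $\sgn_2(-1)=0$, while $d=\Prob(\sgn_2(2)=0)$ (the unramifiedness of $F(\sqrt 2)/F$ at $2$ in the conjecture's wording), and by the symmetric formula in Conjecture \ref{conj::unramifiedquadraticextensions} we also get $\Prob(\sgn_2(-2)=0)=d$. The quantity $e$ records the probability that all three of $\sgn_2(-1),\sgn_2(2),\sgn_2(-2)$ vanish simultaneously, which by $\F_2$-linearity in $V_2(F)$ is equivalent to any two of them vanishing.

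With this in hand I would run through the six rows of the classification table, noting the disjointness structure: by linearity in $V_2(F)$, the three single-vanishing events $\{\sgn_2(-1)=0\}$, $\{\sgn_2(2)=0\}$, $\{\sgn_2(-2)=0\}$ have all pairwise (and triple) intersections equal to the event of probability $e$. A short inclusion–exclusion then gives, conditionally on $2\in\Sel_2^\Q(F)$, the probabilities $c-e$, $d-e$, $d-e$, $e$ and $a-c-2d+2e$ for the exact patterns of zeros/non-zeros appearing in the five inner rows of each block of the table. Multiplying by the independent factor $b$ or $1-b$ coming from event (i) and, for type B(iii), summing the three rows from the second block together with the single B(iii) row from the first block yields exactly the formulas tabulated in the conjecture.

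The main conceptual obstacle is not the arithmetic of the inclusion–exclusion but the fact that Conjectures \ref{conj::pinSel2} and \ref{conj::unramifiedquadraticextensions} are themselves built on the unproven heuristics (H2) and (H3) (and on Bhargava's mass formula, which is only known unconditionally for $n\le 5$), so the present statement inherits those assumptions. A secondary technical point is to justify that the three events $\{\sgn_2(2)=0\}$, $\{\sgn_2(-2)=0\}$, $\{\sgn_2(-1)=0\}$ are really mutually exclusive outside their common refinement $e$, which was proved in Conjecture \ref{conj::unramifiedquadraticextensions} only when $4\nmid n$; for $4\mid n$ one must use the extra line in that conjecture and reverify the inclusion–exclusion, but since the only overlap that opens up is the full triple intersection (still equal to $e$), the formulas go through unchanged.
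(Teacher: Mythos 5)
Your proposal is correct and follows essentially the same route as the paper, whose entire proof is the one-line remark that the result ``follows from a case analysis using Theorem \ref{thm::Q-types}''; you simply make that case analysis explicit, and your inclusion--exclusion (using the $\F_2$-linear relation among $\sgn_2(-1)$, $\sgn_2(2)$, $\sgn_2(-2)$ to see that the only overlap of the three vanishing events is the triple intersection of mass $e$) reproduces each tabulated density exactly.
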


\begin{proof} This follows from a case analysis using Theorem \ref{thm::Q-types}. \end{proof}

\begin{theorem} \label{thm::degreetwoandfour}
Conjectures $\ref{conj::pinSel2}$ - $\ref{conj:fullclass}$ are theorems for $S_n$-number fields of degree $n =2,4$.
 \end{theorem}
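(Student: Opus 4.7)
The plan is to leverage the fact that Bhargava's Conjecture \ref{conj::mainconj}, along with its underlying heuristic assumptions (H2) and (H3), is an unconditional theorem for $S_n$-number fields of degree $n \leq 5$. This reduces the problem to a computation: for $n = 2, 4$, each statement in Conjectures \ref{conj::pinSel2}--\ref{conj:fullclass} can be recast as an assertion about the Bhargava mass of an explicit set of local conditions, and we need only verify the claimed closed forms.

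First, for Conjecture \ref{conj::pinSel2}, I would observe that $p \in \Sel_2^\Q(F)$ is equivalent to $(p)$ being the square of a fractional ideal, which is purely a condition on $F \otimes_\Q \Q_p \simeq \prod_i F_i$: every ramification index $e_i$ must be even. I would then compute the Bhargava mass $\sum \frac{1}{\Disc_p(F_p) \,\#\Aut_{\Q_p}(F_p)}$ over $\Q_p$-algebras $F_p$ of degree $2m$ with all ramification indices even, matching it to $c(m,p)\cdot p^{-m}$ by a natural ``square root'' correspondence with $\Q_p$-algebras of degree $m$ (halving each $e_i$ while preserving residue degrees) and tracking the extra factor of $p^m$ in the discriminant. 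Dividing by $c(2m,p)$ then yields the stated probability. Conjecture \ref{conj::psinSel2} follows by independence of places together with the observation that the infinite specification ``$q \notin \Sel_2^\Q(F)$ for all $q \in T$'' is acceptable in the sense of \cite[Theorem 1.3]{Bhar4}, because the tail mass decays like a convergent Euler product.

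Next, for Conjecture \ref{conj::unramifiedquadraticextensions}, I would reinterpret each condition $\sgn_2(\varepsilon) = 0$ (for $\varepsilon \in \{-1, 2, -2\}$) as the statement that $F(\sqrt{\varepsilon})/F$ is unramified at all places above $2$, which is a local condition on $F \otimes_\Q \Q_2$. Combined with $2 \in \Sel_2^\Q(F)$, this becomes an explicit enumeration of $\Q_2$-algebras of degree $n$ with prescribed ramification data and prescribed behavior with respect to $\Q_2(\sqrt{\varepsilon})$, whose masses I would sum directly. The mutual exclusion when $4 \nmid n$ and the joint probability when $4 \mid n$ fall out of the structure of the three unramified quadratic extensions of $\Q_2$. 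With Conjectures \ref{conj::pinSel2}--\ref{conj::unramifiedquadraticextensions} in hand, Conjecture \ref{conj:fullclass} reduces to a mechanical case analysis on the conditioning table in Theorem \ref{thm::Q-types}, expanding out the probabilities $a,b,c,d,e$.

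The main obstacle is the mass-formula bookkeeping for Conjectures \ref{conj::pinSel2} and \ref{conj::unramifiedquadraticextensions}. In degree $n = 2$ the verification is essentially by hand, since the relevant $\Q_p$-algebras are only $\Q_p \times \Q_p$, the unramified quadratic $\Q_{p^2}$, and the ramified quadratics (two for odd $p$, six for $p = 2$). In degree $n = 4$ the enumeration is tractable but delicate, requiring the Jones--Roberts tables at $p = 2$ where many algebras arise and the signatures $\sgn_2(\pm 1), \sgn_2(\pm 2)$ must be tracked individually. The cleanest path is to separate the odd-prime contribution, where the Bhargava mass has a uniform shape and the identity with $c(m,p)/p^m$ falls out of the parametrization above, from the $p = 2$ contribution, which is a finite---but genuinely nontrivial---table that closes the argument.
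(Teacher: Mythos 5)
Your proposal matches the paper's argument: the paper likewise obtains Theorem \ref{thm::degreetwoandfour} by observing that Conjecture \ref{conj::mainconj} (with (H2) and (H3)) is a theorem for $S_n$-number fields of degree $n \leq 5$, and that Conjectures \ref{conj::pinSel2}--\ref{conj:fullclass} are unconditional consequences of it via exactly the local mass computations you outline --- in particular the even-ramification criterion and the ``square root'' bijection on splitting symbols (Lemma \ref{lem::bijection}) for Conjecture \ref{conj::pinSel2}, acceptability of the infinite specification for Conjecture \ref{conj::psinSel2}, and the case analysis from Theorem \ref{thm::Q-types} for Conjecture \ref{conj:fullclass}. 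The only divergence is that for the $2$-adic masses in Conjecture \ref{conj::unramifiedquadraticextensions} the paper derives a formula uniform in $n$ from Serre's mass formula (Propositions \ref{prop::countsram}--\ref{prop::counts} and Theorem \ref{thm::unrammifiedalgebras}) rather than enumerating degree-$4$ algebras from tables, but for $n=2,4$ both routes complete the verification.
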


\subsection{Notation}
Throughout, let $p \in \Z$ be a prime. Let $\AlK = \prod_{i =1}^r F_i$ be a $\Q_p$-algebra where $e_i$ is the ramification index and $f_i$ is the inertia degree of the component field $F_i/\Q_p$. 
 
 \begin{definition} \label{def::split} The \defi{splitting symbol} associated to the algebra $\AlK$ is
$$(\AlK,p) \colonequals (f_1^{e_1}, \dots, f_r^{e_r}).$$ 

\end{definition}

We say that the splitting symbol $(\AlK,p)$ in definition \ref{def::split} has degree $n \colonequals \sum_i e_if_i$ and discriminant $\Disc_p((\AlK,p)) \colonequals p^{k}$ where $k \colonequals \sum_i (e_i-1)f_i $ is the \defi{discriminant exponent}. The total number of automorphisms of the splitting symbol is $\# \Aut (\AlK,p) \colonequals \prod_{i=1}^r f_i \cdot \frac{1}{N}$ where $N$ is the number of permutations of the 
components $f_i^{e_i}$ that preserve the splitting symbol.

\begin{definition} \label{def::partitions}
The \defi{partition} associated to a splitting symbol $(\AlK,p)$ is  
\[\lambda((\AlK,p)) \colonequals \underbrace{(e_1-1) + \dots + (e_1-1)}_{f_1 \text{ times }} + \dots + \underbrace{(e_r-1) + \dots + (e_r-1)}_{f_r \text{ times }} \]
which is a partition of the discriminant exponent $k$ into exactly $n-k$ parts.
\end{definition}

\begin{remark}
Definition \ref{def::partitions} differs from \cite{Bhar} in that we allow $0$ to be an entry in a partition and so $\lambda((\AlK,p))$ is a partition of the discriminant exponent $k$ into \emph{exactly} $n-k$ parts as opposed to \emph{at most} $n-k$ parts. See \cite{Ked} who makes the same distinction. 
\end{remark}

Let $\Spl(n)$ be the set of splitting symbols of degree $n$. For a discriminant exponent $0 \leq k \leq n-1$,  let $P(k,n-k)$ be the set of partitions of $k$ into at most $n-k$ parts.  For a fixed partition $\tau \in P(k,n-k)$,  let $\Sigma_\tau$ be the set of all $\Q_p$-algebras  $\AlK$ such that $\lambda((\AlK,p)) = \tau$.

\begin{theorem} \cite[Proposition 2.2]{Bhar} \label{thm::Bharparts} We have
 \begin{eqnarray*}\sum_{\AlK \in \Sigma_\tau } \frac{1}{\Disc_p(\AlF)} \cdot \frac{1}{\# \Aut_{\Q_p}(\AlF)} & =  & \frac{1}{p^k}. \end{eqnarray*} 
\end{theorem}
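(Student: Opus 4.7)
The plan is to reduce the sum over $\Q_p$-algebras with fixed partition $\tau$ to Serre's mass formula for totally ramified extensions of local fields, then close the combinatorics via a classical cycle-type identity for the symmetric group. As the key analytic input I would invoke Serre's mass formula: for a $p$-adic local field $L$ with residue field of size $q_L$ and each $e \geq 1$,
\begin{equation*}
\sum_{\substack{K/L \text{ tot.\ ram.} \\ [K:L]=e}} \frac{1}{\#\Aut_L(K) \cdot \Disc_L(K)} = \frac{1}{q_L^{e-1}},
\end{equation*}
where the sum is over $L$-isomorphism classes and $\Disc_L(K)$ denotes the $q_L$-adic norm of the different.

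Next I would compute the contribution from a single extension $F/\Q_p$ of ramification $e$ and residue degree $f$. Its unique unramified subextension is $L = L_f$, over which $F$ is totally ramified of degree $e$; since every $\Q_p$-automorphism of $F$ preserves $L$, there is a short exact sequence $1 \to \Aut_L(F) \to \Aut_{\Q_p}(F) \to H \to 1$ with $H \subseteq \Gal(L/\Q_p) \cong \Z/f$ the stabilizer of the $L$-iso class $[F]$ under the Galois action on $L$-iso classes. An orbit-stabilizer computation converts the sum over $\Q_p$-iso classes into a sum over $L$-iso classes with a uniform factor of $1/f$; combined with $\Disc_p(F) = \Nm_{L/\Q_p}(\Disc_L(F))$, this yields
\begin{equation*}
\sum_{\substack{F/\Q_p \\ \text{ram } e,\ \text{res deg } f}} \frac{1}{\#\Aut_{\Q_p}(F) \cdot \Disc_p(F)} = \frac{1}{f \cdot p^{f(e-1)}}.
\end{equation*}

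I would then assemble the full algebra in two stages. Fix a splitting symbol with type multiplicities $n_{e,f}$. For $\AlK = \prod F_i$, groupoid accounting yields $\#\Aut_{\Q_p}(\AlK) = \bigl(\prod_i \#\Aut_{\Q_p}(F_i)\bigr) \cdot \prod_{[K]} m_{[K]}!$ and $\Disc_p(\AlK) = \prod_i \Disc_p(F_i)$. Converting the sum over unordered multisets into a sum over ordered tuples of $\Q_p$-iso classes of the specified type profile makes the iso-class multiplicity factors $m_{[K]}!$ cancel, producing $\prod_{(e,f)} \frac{1}{n_{e,f}!} \bigl(f \cdot p^{f(e-1)}\bigr)^{-n_{e,f}}$. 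Summing now over all splitting symbols producing $\tau$: if $m_a$ denotes the multiplicity of $a$ in $\tau$, the constraint $\lambda((\AlK,p)) = \tau$ decouples across $a$ into the condition that $(n_{a+1,f})_{f \geq 1}$ be a partition of $m_a$ with $n_{a+1,f}$ parts of size $f$. Pulling the constant $p^{-a m_a} = p^{-\sum_f f a n_{a+1,f}}$ out of each inner sum reduces the problem to the classical identity
\begin{equation*}
\sum_{\lambda \vdash m} \prod_f \frac{1}{n_f! \cdot f^{n_f}} = 1
\end{equation*}
(equivalent to the fact that permutations of $[m]$ with cycle type $\lambda$ number $m!/\prod_f n_f! f^{n_f}$ and sum to $m!$), so the product over $a$ yields $\prod_a p^{-a m_a} = p^{-k}$.

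The main obstacle is the careful bookkeeping in Steps 2 and 3: relating $\Aut_{\Q_p}(F)$ to $\Aut_L(F)$ when the stabilizer $H$ varies with $[F]$, and verifying that the two overcounting factors (from multiset-vs.-ordered-tuple and from type-permutation) cancel exactly. Serre's mass formula is black-boxed as the key analytic ingredient; the proof is otherwise a combinatorial unwinding.
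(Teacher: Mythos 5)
Your argument is correct, but it is worth noting that the paper itself offers no proof of this statement: it is quoted verbatim as \cite[Proposition 2.2]{Bhar} and used as a black box. What you have written is essentially a self-contained reconstruction of Bhargava's original proof, and every step checks out: Serre's mass formula over the unramified subfield $L$ of degree $f$; the descent from $L$-isomorphism classes to $\Q_p$-isomorphism classes via the $\Gal(L/\Q_p)$-action (the one point to spell out is that the map $\Aut_{\Q_p}(F)\to\Gal(L/\Q_p)$ surjects onto the stabilizer of $[F]_L$ because an $L$-isomorphism from $F$ to its $\sigma$-twist is precisely a $\sigma$-semilinear automorphism of $F$, i.e.\ a lift of $\sigma$ — this is the content of \cite[Proposition 2.1]{Bhar}, which the paper also invokes in its Proposition \ref{prop::countsram}); the multiset-to-ordered-tuple conversion cancelling the $\prod m_{[K]}!$ in $\#\Aut_{\Q_p}(\AlK)$; and the final decoupling over part-sizes $a$ of $\tau$ (including the zero parts coming from unramified factors, which the paper's Definition \ref{def::partitions} explicitly retains) reducing to the cycle-index identity $\sum_{\lambda\vdash m}\prod_f (n_f!\,f^{n_f})^{-1}=1$. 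Compared with the paper, which buys brevity by citation, your route makes visible exactly which inputs the mass formula rests on — Serre's formula plus elementary groupoid counting — and is the same mechanism the paper later reuses (Propositions \ref{prop::countsram}--\ref{prop::thecount} and Theorem \ref{thm::unrammifiedalgebras}) to compute the restricted masses over $\Sigma(2m,d)$.
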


Let $\Spl(n)_\text{even}$ be the set of splitting symbols of degree $n$ with \emph{even} ramification indices, and let $P(k,n-k)_\text{odd}$ be the set of partitions of $k$ into exactly $n-k$ \emph{odd} parts.


\begin{lemma} \label{lem::bijection} For $m \in \Z_{\geq 1}$. The following map on splitting symbols is a bijection. 
\begin{eqnarray*} \Phi \colon \Spl(2m)_\text{even} & \to &  \Spl(m) \\
(f _1^{2e_1}, \dots ,  f_r^{2e_r}) & \mapsto  & (f _1^{e_1}, \dots ,  f_r^{e_r}). \end{eqnarray*} 
Let $0\leq k \leq m-1$ and set $k_0 = m +k$. The following map on partitions is a bijection. 
 \begin{eqnarray*} 
\Psi \colon \text{P}(k_0,2m-k_0)_\text{odd}  & \to &  \text{P}(k,m-k) \\
(2e_1 -1) + \hdots +  (2e_r -1) & \mapsto & (e_1 -1) + \hdots + (e_r -1).  \end{eqnarray*} \end{lemma}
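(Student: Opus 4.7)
The plan is to prove the lemma by producing explicit inverses for both maps and verifying the degree/size bookkeeping. Both statements are essentially combinatorial identities dressed up in the language of splitting symbols and partitions, so the main task is to check that the parameters line up with the stated domains and codomains.

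For $\Phi$, I would first check well-definedness: given $(f_1^{2e_1},\dots,f_r^{2e_r}) \in \Spl(2m)_{\text{even}}$, the degree identity $\sum_i 2e_i f_i = 2m$ gives $\sum_i e_i f_i = m$, so the image lies in $\Spl(m)$. The candidate inverse is the doubling map $(f_1^{e_1},\dots,f_r^{e_r}) \mapsto (f_1^{2e_1},\dots,f_r^{2e_r})$, which lands in $\Spl(2m)_{\text{even}}$ by the same calculation in reverse. Composing the two maps in either order returns the identity by inspection.

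For $\Psi$, write $k_0 = m+k$, so the partitions in $P(k_0, 2m-k_0)_{\text{odd}}$ have exactly $2m - k_0 = m-k$ odd parts summing to $m+k$. Writing each odd part as $2e_i - 1$ with $e_i \in \Z_{\geq 1}$, the number of parts $r = m-k$ and the identity $\sum_{i=1}^{r}(2e_i - 1) = 2\sum_i e_i - (m-k) = m+k$ force $\sum_i e_i = m$. Then $\sum_i (e_i - 1) = m - (m-k) = k$, with each $e_i - 1 \geq 0$, giving a partition of $k$ into at most $m-k$ (positive) parts after discarding zeros, i.e., an element of $P(k, m-k)$. The inverse is: given $(\mu_1,\dots,\mu_s) \in P(k, m-k)$ with $s \leq m-k$, pad with $m-k-s$ copies of $0$ to a tuple $(\mu_1,\dots,\mu_{m-k})$ and return $(2\mu_i + 1)_{i=1}^{m-k}$. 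This is a tuple of exactly $m-k$ odd positive integers summing to $2k + (m-k) = m+k = k_0$, hence lies in $P(k_0, 2m-k_0)_{\text{odd}}$. Again the two compositions are the identity by construction.

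The only genuinely subtle point is reconciling the convention that $P(k, n-k)$ consists of partitions of $k$ into \emph{at most} $n-k$ parts with the remark after Definition \ref{def::partitions} that one allows $0$ as an entry, so that a partition with $m-k$ nonnegative parts is the same data as a partition with at most $m-k$ positive parts via padding by zeros. Once this identification is made explicit, there is no real obstacle: the bijection is forced by the arithmetic of the degrees and the odd/even constraints on ramification indices.
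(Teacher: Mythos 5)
Your proof is correct. The paper actually states Lemma \ref{lem::bijection} without any proof, so there is nothing to compare against; your argument via explicit inverses (halving/doubling the ramification indices for $\Phi$, and the substitution $2e_i-1 \leftrightarrow e_i-1$ with the degree bookkeeping $\sum_i e_i f_i = m$ for $\Psi$) is the natural one, and you correctly isolate the only delicate point, namely reconciling ``exactly $m-k$ odd parts'' with ``at most $m-k$ parts'' via the zero-padding convention from the remark after Definition \ref{def::partitions}.
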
 

\subsection{Conjecture \ref{conj::pinSel2}}
Let $F$ be a number field of degree $n=2m$ with ring of integers $\Z_F$.  Let $p \in \Z$ be a prime and let $F_p = F \otimes \Q_p$.  Then the ideal $p\Z_F$ factors as the square of a fractional ideal if and only if $(F_p,p) \in S(2m)_\text{even}$.  In this case,  the sum of the residue degrees $\sum_i f_i$ is less than or equal to $m$ and therefore the discriminant exponent $k = 2m- \sum_i f_i$ will be greater than or equal to $m$.  Therefore, $\lambda((F_p,p)) \in P(k,2m-k)_\text{odd}$ for $m \leq k \leq 2m -1$.

Let $\Sigma_{even}$ be the set of local algebras $F_p$ with $(F_p,p) \in S(2m)_\text{even}$. By Theorem \ref{thm::Bharparts} and Lemma \ref{lem::bijection} (2)  then \begin{eqnarray*} \sum_{\AlF \in \Sigma_{\even}}  \frac{1}{\Disc_p(\AlF)} \cdot \frac{1}{\# \Aut_{\Q_p}(\AlF)}  & = &  \sum_{k_0 = m}^{2m-1} \# P(k_0,2m-k_0)_\text{odd} \cdot p^{-k_0} \\
& = & \sum_{k = 0}^{m-1} \# P(k,m-k) \cdot p^{-(k+m)} \\  
& = & \sum_{k = 0}^{m-1} q(k,m-k) \cdot p^{-k} \cdot p^{-m} \: = \: c(m,p)\cdot p^{-m} .  \end{eqnarray*}
By Equation \ref{eqn::density}, the total density of $\Q_p$-algebras of degree $2m$ is $c(2m,p)$ and dividing these quantities yields Conjecture \ref{conj::pinSel2}. 

\subsection{Conjecture \ref{conj::unramifiedquadraticextensions}}

Let $F$ be an $S_n$-number field of degree $n = 2m$. For a prime $p \in \Z$, let $\AlF = F \otimes_\Q \Q_p$ be the corresponding local algebra and note that $\AlF  \simeq \prod_{v|p} F_v$ where $F_v$ is the completion of $F$ at a place $v$. 

Let $K$ be an extension of $F$ with minimal polynomial $f(x) \in F[x]$. For a place $v \in F$, let $K_v = K \otimes_F F_v$ be the corresponding local algebra and note that $K_v  \simeq \prod_{w \mid v} K_w$ where $K_w$ is the completion of $K$ at a place $w$. Let $f(x) = f_1(x) \dots f_r(x)$ be a factorization of the minimal polynomial of $K$ into irreducibles over $F_v[x]$ and recall that the places $w \mid v$ correspond to the irreducible factors $f_i(x)$, explicitly $K_w \simeq K_i \colonequals  F_v[x]/f_i(x)$. The extension $K/F$ is unramified above $v$ if and only if the extension of local fields $K_w / F_v$ is unramified for each place $w \mid v$.

We now assume that $f(x)= x^2 -D$ where $D$ is the fundamental discriminant of a quadratic extension of $\Q$.  Ramification in the extension $K/F$ can only occur at places $v \mid D$,  and hence is completely determined by the component fields $F_v$ occurring in the local algebras $F_p \simeq \prod_{v \mid p} F_v$ for the primes $p \mid D$.  We now derive a formula to count the number of local algebras which are unramified when a square root of $d \in \Q_p^\times$ is adjoined. 

\subsubsection{Masses of local fields} We start with a statement of Serre's mass formula for local fields. 

\begin{theorem}[Serre's Mass formula] \label{prop::serrecountsram} Let $F$ be a finite extension of $\Q_p$ with a residue field of cardinality $q =p^{f_0}$. The mass of the totally ramified extensions $K/F$ of degree $e$ is \[ \sum_{\substack{ [K: F] = e  \\ \text{totally ramified}}} \frac{1}{\Norm_{F/\Q_p}  \Disc_q(K/F)} \cdot \frac{1}{\# \Aut_{F}(K)}  = \frac{1}{q^{e-1}}.  \]\end{theorem}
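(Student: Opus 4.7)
The plan is to use Serre's original method: parametrize totally ramified degree-$e$ extensions of $F$ by Eisenstein polynomials, and compute the mass of the parametrizing space in two different ways. Let $E_e(F)$ denote the set of Eisenstein polynomials $f(x) = x^e + a_{e-1}x^{e-1} + \cdots + a_0$ over $F$ (monic with $a_i \in \frakp_F$ for all $i$ and $v_F(a_0)=1$), identified via coefficients with a measurable subset of $F^e$. By Krasner's lemma, every $f \in E_e(F)$ is irreducible and defines a totally ramified extension $K_f := F[x]/(f)$ of degree $e$ in which the image of $x$ is a uniformizer, and every such extension arises this way. First, using the additive Haar measure $\mu_F$ on $F$ normalized by $\mu_F(\cO_F)=1$, direct integration over the coefficients gives $\mu_F(E_e(F)) = \mu_F(\frakp_F \setminus \frakp_F^2) \cdot \mu_F(\frakp_F)^{e-1} = q^{-e}(1 - q^{-1})$.

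Second, I would compute $\mu_F(E_e(F))$ by fibering over isomorphism classes $[K/F]$. For each such class, the minimal polynomial map $\pi \mapsto f_\pi$ defines $\phi_K \colon U_K \to E_e(F)$ on the set $U_K := \frakp_K \setminus \frakp_K^2$ of uniformizers of $K$, whose fiber over $f$ consists of the roots of $f$ in $K$: empty when $K_f \not\cong K$, and of cardinality $|\Hom_F(K_f,K)| = \#\Aut_F(K)$ otherwise. Since $K/F$ is totally ramified, $\cO_K = \cO_F[\pi]$ for any uniformizer $\pi$, and the power basis identifies the additive Haar measure $\mu_K$ (normalized by $\mu_K(\cO_K)=1$) with $\mu_F^{\otimes e}$; in particular $\mu_K(U_K) = q^{-1}(1-q^{-1})$. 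A Vandermonde computation then shows that the Jacobian of $\phi_K$ at $\pi$ satisfies
$$|J_{\phi_K}(\pi)|_F = |\Norm_{K/F}(f'_\pi(\pi))|_F = \frac{1}{\Norm_{F/\Q_p}\Disc_q(K/F)},$$
where the second equality uses the classical identification $\frakd_{K/F} = (f'_\pi(\pi))$ of the different of the monogenic extension $K = F(\pi)$.

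Applying the change of variables identity $\int_{U_K} |J_{\phi_K}|\,d\mu_K = \int_{E_e(F)} \#\phi_K^{-1}(f)\,d\mu_F$ together with the fiber count yields
$$\mu_F\bigl(\{f \in E_e(F) : K_f \cong K\}\bigr) = \frac{q^{-1}(1-q^{-1})}{\#\Aut_F(K) \cdot \Norm_{F/\Q_p}\Disc_q(K/F)}.$$
Summing over isomorphism classes $[K]$, which partition $E_e(F)$ up to a measure-zero subset (polynomials with multiple roots), and comparing with the first calculation, the common factor $q^{-1}(1 - q^{-1})$ cancels and the claimed identity drops out after rearranging.

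The main obstacle will be the Jacobian computation. To justify $|J_{\phi_K}(\pi)|_F = |\Norm_{K/F}(f'_\pi(\pi))|_F$, the cleanest route is to base-change to $\bar F$: the map $\phi_K$ factors through the $F$-linear isomorphism $K \otimes_F \bar F \xrightarrow{\sim} \bar F^e$ given by the $e$ embeddings of $K$ (whose determinant in the power basis is the Vandermonde $\prod_{i<j}(\sigma_i\pi - \sigma_j\pi)$), followed by the elementary-symmetric-polynomial map $\bar F^e \to \bar F^e$ sending $(\alpha_i) \mapsto$ coefficients of $\prod(x-\alpha_i)$ (whose Jacobian at $(\sigma_i\pi)$ is, up to sign, the same Vandermonde). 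The product of the two is $\pm\prod_{i\ne j}(\sigma_i\pi-\sigma_j\pi) = \pm\Norm_{K/F}(f'_\pi(\pi))$, delivering the required Jacobian and completing the reduction to the discriminant.
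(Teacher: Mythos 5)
Your argument is correct, but note that the paper does not actually prove this statement: it is quoted as Serre's mass formula, with a pointer to \cite[Equation (1.2)]{Bhar} for $F=\Q_p$ and to an appendix (absent from this version) for general $F$. What you have written is essentially Serre's original proof, and it goes through for an arbitrary base field $F$ exactly as you describe: the two computations of $\mu_F(E_e(F))$ match up, the factor $q^{-1}(1-q^{-1})$ cancels, and the normalization $|x|_F=q^{-v_F(x)}$ makes $|\Norm_{K/F}(f'_\pi(\pi))|_F$ equal to $q^{-m}$ where $\Disc(K/F)=\frakp_F^m$, which is precisely $1/\Norm_{F/\Q_p}\Disc_q(K/F)$ in the paper's convention. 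Two small tightenings: in characteristic zero an Eisenstein polynomial is automatically irreducible and separable, so the classes $\{f: K_f\cong K\}$ partition $E_e(F)$ exactly and no measure-zero exception is needed; and it is worth saying explicitly that $|J_{\phi_K}|$ is \emph{constant} on $U_K$ (the different exponent does not depend on the choice of uniformizer, since $\cO_K=\cO_F[\pi]$ for every uniformizer $\pi$ of a totally ramified extension), which is what lets you pull it out of the integral and conclude
\[
\mu_F\bigl(\{f: K_f\cong K\}\bigr)\cdot \#\Aut_F(K)=\frac{q^{-1}(1-q^{-1})}{\Norm_{F/\Q_p}\Disc_q(K/F)}.
\]
With those remarks your write-up is a complete and self-contained proof of the statement the paper leaves to a citation, and it is the natural one to include if the intended appendix is to be supplied.
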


Here $\Norm_{F/\Q_p}  \Disc_q(K/F)$ is the highest power of $q$ dividing $\Norm_{F/\Q_p}  \Disc(K/F)$. The statement can be found for $F = \Q_p$ as \cite[Equation (1.2)]{Bhar} and see the Appendix for arbitrary local fields. We convert this into a \emph{weighted count} or \emph{mass} of local fields $K$ weighted by $1/\# \Aut_{\Q_p}(K)$ and $1/ \Disc_p(K)$ up to isomorphism over $\Q_p$.

\begin{proposition} \label{prop::countsram} Let $F$ be a finite Galois extension of $\Q_p$ with a residue field of cardinality $q =p^{f_0}$. The mass of the totally ramified extensions $K/F$ of degree $e$ is \[ \sum_{\substack{ [K: F] = e  \\ \text{totally ramified}}} \frac{1}{\Disc_p(K)} \cdot \frac{1}{\# \Aut_{\Q_p}(K)}  = \frac{1}{q^{e-1}} \cdot \frac{1}{[F:\Q_p]} \cdot \frac{1}{\Disc_p(F)^{e}}. \] \end{proposition}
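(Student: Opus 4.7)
The plan is to derive the proposition from Serre's mass formula (Theorem~\ref{prop::serrecountsram}) by combining two ingredients: multiplicativity of the discriminant in the tower $\Q_p \subset F \subset K$, and a Galois-theoretic comparison of $\Aut_F(K)$ with $\Aut_{\Q_p}(K)$ that exploits the hypothesis that $F/\Q_p$ is Galois.

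For the first ingredient, the tower formula for the different, $\frakd(K/\Q_p) = \frakd(K/F)\frakd(F/\Q_p)\calO_K$, together with transitivity of the norm, yields $\Disc(K/\Q_p) = \Norm_{F/\Q_p}\Disc(K/F) \cdot \Disc(F/\Q_p)^{e}$. In the $p$-power shorthand of the proposition this reads $\Disc_p(K) = \Norm_{F/\Q_p}\Disc_q(K/F) \cdot \Disc_p(F)^{e}$, so the factor $1/\Disc_p(F)^e$ pulls out of the Serre sum cleanly, independent of $K$.

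For the second ingredient, because $F/\Q_p$ is Galois, every $\Q_p$-algebra automorphism of $K$ carries $F \subset K$ to itself and hence restricts to an element of $\Gal(F/\Q_p)$. This produces a short exact sequence $1 \to \Aut_F(K) \to \Aut_{\Q_p}(K) \to H_K \to 1$, where $H_K \leq \Gal(F/\Q_p)$ is the image. Simultaneously $\Gal(F/\Q_p)$ acts on the set of $F$-isomorphism classes of totally ramified degree-$e$ extensions of $F$ by twisting the embedding $F \hookrightarrow K$, and one checks that this action preserves the discriminant as well as the orders $\#\Aut_F(K)$ and $\#\Aut_{\Q_p}(K)$, that two $F$-iso classes lie in the same orbit if and only if they are $\Q_p$-isomorphic, and that the stabilizer of the class of $K$ is precisely $H_K$. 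Consequently each $\Q_p$-iso class corresponds to a Galois orbit of $F$-iso classes of size $[F:\Q_p]/|H_K| = [F:\Q_p]\#\Aut_F(K)/\#\Aut_{\Q_p}(K)$.

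Regrouping Serre's sum over these orbits then converts the weight $1/\#\Aut_F(K)$ on each $F$-iso class into $[F:\Q_p]/\#\Aut_{\Q_p}(K)$ on each $\Q_p$-iso class, so that Serre's formula $\sum_{[K/F]} 1/(\Norm_{F/\Q_p}\Disc_q(K/F) \cdot \#\Aut_F(K)) = 1/q^{e-1}$ becomes $\sum_{[K/\Q_p]} 1/(\Norm_{F/\Q_p}\Disc_q(K/F) \cdot \#\Aut_{\Q_p}(K)) = 1/([F:\Q_p]\, q^{e-1})$; substituting $\Norm_{F/\Q_p}\Disc_q(K/F) = \Disc_p(K)/\Disc_p(F)^e$ from the tower formula then yields the claimed mass. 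The main obstacle is the orbit-stabilizer analysis: since $K/\Q_p$ need not be Galois, $H_K$ can be a proper subgroup of $\Gal(F/\Q_p)$ that varies with $K$, so one must set up the twisting action carefully — interpreting it, say, as replacing the embedding $\iota\colon F \hookrightarrow K$ by $\iota\circ\sigma^{-1}$ — and confirm that its stabilizer of the $F$-iso class of $K$ is exactly $H_K$, so that the reindexing of Serre's sum by $\Q_p$-iso classes produces precisely the factor $[F:\Q_p]$ claimed.
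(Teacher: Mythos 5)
Your proposal is correct and follows essentially the same route as the paper: Serre's mass formula, the tower identity $\Norm_{F/\Q_p}\Disc_q(K/F) = \Disc_p(K)/\Disc_p(F)^{e}$, and a regrouping of $F$-isomorphism classes of extensions into $\Q_p$-isomorphism classes. The only difference is that you establish the regrouping identity $\sum_{K' \in W} 1/\#\Aut_F(K') = \#\Aut_{\Q_p}(F)/\#\Aut_{\Q_p}(K)$ directly via an orbit--stabilizer analysis of the $\Gal(F/\Q_p)$-action on $F$-isomorphism classes, whereas the paper simply cites this as Bhargava's Proposition 2.1.
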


\begin{proof} The mass of isomorphism classes over $F$ of totally ramified extensions $K/F$ is  \[ \sum_{\substack{ [K: F] = e  \\ \text{totally ramified}}}  \frac{1}{\Norm_{F/\Q_p} \Disc_q(K/F)} \cdot \frac{1}{\# \Aut_F(K)} = \frac{1}{q^{e-1}}. \] By \cite[Corollary (2.10)]{Neu} then $\text{Norm}_{F/\Q_p} \Disc_q(K/F) = \Disc_p(K) / \Disc_p(F)^{e}$. 
This yields \begin{equation*} \label{eqn::rand0} \sum_{\substack{ [K: F] = e  \\ \text{totally ramified}}}  \frac{1}{\Disc_p(K)} \cdot \frac{1}{\# \Aut_F(K)}  = \frac{1}{q^{e-1}} \cdot \frac{1}{\Disc_p(F)^{e}}. \end{equation*}
Let $W$ be a set of representatives for isomorphism classes of extensions $K$ over $F$ that will become isomorphic to the same extension $K$ over $\Q_p$. When $F$ is Galois, by \cite[Proposition (2.1)]{Bhar} then \[ \sum_{K \in W} \frac{1}{\# \Aut_{F}(K)} =  \frac{\# \Aut_{\Q_p}(F)}{\# \Aut_{\Q_p}(K)}  \]  
Arranging the earlier sum according to these sets $W$ for  each extension $K/\Q_p$  and the substituting the above result with $\#\Aut_{\Q_p}(F) = [F:\Q_p]$ concludes the proof. \end{proof}
 
 We now determine the mass of all local fields with prescribed ramification and inertia over a Galois extension $F/\Q_p$. 

\begin{proposition} \label{prop::counts} Let $F$ be a finite Galois extension of $\Q_p$ with residue field of cardinality $q =p^{f_0}$. The mass of the extensions $K/F$ with inertia degree $f$ and ramification index $e$ (counted up to isomorphism over $\Q_p$) is \[ \sum_{ \substack{[K: F] =  ef \\ \Inertia_{K/F} = f }} \frac{1}{\Disc_p(K)} \cdot \frac{1}{\# \Aut_{\Q_p}(K)} = \frac{1}{f} \cdot\frac{1}{q^{f(e-1)}} \cdot \frac{1}{[F :\Q_p]} \cdot \frac{1}{\Disc_p( F)^{ef}}. \]\end{proposition}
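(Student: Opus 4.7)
The plan is to reduce to the totally ramified setting of Proposition \ref{prop::countsram} by factoring each extension $K/F$ through its inertia field. For any $K/F$ with inertia degree $f$ and ramification index $e$, there is a unique subfield $F' \subseteq K$ with $F'/F$ unramified of degree $f$ (the inertia field of $K/F$), and $K/F'$ is then totally ramified of degree $e$. Since $F/\Q_p$ is Galois and the unramified extension $F'/F$ of degree $f$ is unique inside a fixed algebraic closure, $F'$ is stable under every $\sigma \in \Gal(\overline{\Q_p}/\Q_p)$, so $F'/\Q_p$ is itself Galois. Hence Proposition \ref{prop::countsram} can be applied with $F'$ in place of $F$.

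I would next record the three invariants of $F'$ needed to invoke the previous result: its residue field has cardinality $q^f$; its degree over $\Q_p$ is $[F':\Q_p] = f[F:\Q_p]$; and since $F'/F$ is unramified we have $\Disc(F'/F) = \cO_F$, so by multiplicativity of the discriminant in a tower, $\Disc_p(F') = \Disc_p(F)^{[F':F]} = \Disc_p(F)^f$.

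The central step is that the assignment $K \mapsto (F' \subset K)$ sets up a bijection between $\Q_p$-isomorphism classes of $K$ with $[K:F]=ef$ and $\Inertia_{K/F} = f$, and $\Q_p$-isomorphism classes of totally ramified extensions $K/F'$ of degree $e$: in one direction, $F'$ is canonically built from $K$ as its inertia subfield over $F$ so no $K$ is counted twice; in the other, given a totally ramified $K/F'$ of degree $e$, the composite data $K/F'/F$ automatically has ramification $e$ and inertia $f$ over $F$. Applying Proposition \ref{prop::countsram} with $F'$ in place of $F$ and residue field size $q^f$ therefore yields
\[
\sum_{\substack{[K:F] = ef \\ \Inertia_{K/F}=f}} \frac{1}{\Disc_p(K)} \cdot \frac{1}{\#\Aut_{\Q_p}(K)} \;=\; \frac{1}{(q^f)^{e-1}} \cdot \frac{1}{[F':\Q_p]} \cdot \frac{1}{\Disc_p(F')^{e}},
\]
and substituting $[F':\Q_p] = f[F:\Q_p]$ and $\Disc_p(F')^e = \Disc_p(F)^{ef}$ gives the stated formula.

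The main subtle point is to verify that the weighting by $1/\#\Aut_{\Q_p}(K)$ is transported correctly across the bijection. Any $\sigma \in \Aut_{\Q_p}(K)$ must preserve $F$ setwise (since $F/\Q_p$ is Galois) and, by the canonical characterization of the inertia field, must then also preserve $F'$; thus $\Aut_{\Q_p}(K)$ is intrinsic to $K$ regardless of whether $K$ is viewed as an extension of $F$ or of $F'$. Once this compatibility is in place, the proof reduces to the algebraic substitutions above.
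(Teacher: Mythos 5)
Your proof is correct and follows essentially the same route as the paper: both reduce to Proposition \ref{prop::countsram} by passing to the degree-$f$ unramified extension of $F$ (which the paper realizes as the compositum $\tilde{F} = \tilde{L}F$ with the unramified extension $\tilde{L}/\Q_p$ of degree $ff_0$) and then substitute $[\tilde{F}:\Q_p] = f[F:\Q_p]$ and $\Disc_p(\tilde{F}) = \Disc_p(F)^f$. Your additional remarks on why $F'$ is Galois over $\Q_p$ and why the weighting by $1/\#\Aut_{\Q_p}(K)$ transports across the bijection make explicit points the paper leaves implicit.
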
 \begin{proof}

Since there is a unique residue extension of each degree let $\tilde{L}$ be the unramified extension of degree $f \cdot f_0$ with residue field of cardinality $q^{f}$. Let $\tilde{F} = \tilde{L} F$ be the composition which is Galois since both $F, \tilde{L}$ are Galois. Instead of counting extensions of $F$ with inertia degree $f$ and ramification index $e$ it is equivalent to count totally ramified extensions of $\tilde{F}$ of degree $e$. By Proposition \ref{prop::countsram} then \begin{eqnarray*} \sum_{ \substack{ [K: F] = ef \\ \Inertia_{K/F} = f }} \frac{1}{\Disc_p(K)} \cdot \frac{1}{\# \Aut_{\Q_p}(K)} & = & \sum_{ \substack{ [K: \tilde{F}] = e \\ \text{totally ramified} }} \frac{1}{\Disc_p(K)} \cdot \frac{1}{\# \Aut_{\Q_p}(K)}  \\  & = &  \frac{1}{(q^f)^{(e-1)}} \cdot \frac{1}{[\tilde{F} :\Q_p]} \cdot \frac{1}{\Disc_p(\tilde{F})^{e} } \\  & = &  \frac{1}{q^{f(e-1)}} \cdot \frac{1}{f} \cdot \frac{1}{[F:\Q_p]} \cdot \frac{1}{\Disc_p(F)^{ef} }. \end{eqnarray*}   \end{proof}

Let $d \in \Q^\times_p$ be a fundamental discriminant. An immediate result of Proposition \ref{prop::counts} is the mass of the local fields $K/\Q_p$ with inertia degree $f$ and ramification index $2e$ such that $K[x]/(x^2-d) \simeq K \times K$. In fact, these fields have the same mass as the fields $K/\Q_p$ for which $K[x]/(x^2-d) \simeq K_{un}$ where $K_{un}$ is the unique unramified quadratic extension of $K$. 

\begin{lemma} \label{lem::unramextension} Let $d \in \Q^\times_p$ be a fundamental discriminant. The mass of the extensions $K/\Q_p$ with inertia degree $f$, ramification index $2e$, and either $K[x]/(x^2-d) \simeq K \times K$ or $K_{un}$ counted up to isomorphism over $\Q_p$ is  \begin{eqnarray*} \sum_{\substack{[K: \Q_p] = 2ef \\ \Inertia_{K/\Q_p} = f  \\ K[x]/(x^2-d) \simeq K \times K}} \frac{1}{\Disc_p(K)} \cdot \frac{1}{\# \Aut_{\Q_p}(K)} & = & \frac{1}{f} \cdot \frac{1}{p^{f(e-1)}} \cdot \frac{1}{ 2 \cdot \Disc_p( \Q_p(\sqrt{d}))^{ef} } \\\sum_{\substack{[K: \Q_p] = 2ef \\ \Inertia_{K/\Q_p} = f  \\ K[x]/(x^2-d) \simeq K_{un} }} \frac{1}{\Disc_p(K)} \cdot \frac{1}{\# \Aut_{\Q_p}(K)} & = &\frac{1}{f} \cdot \frac{1}{p^{f(e-1)}} \cdot \frac{1}{ 2 \cdot \Disc_p( \Q_p(\sqrt{d}))^{ef}}. \end{eqnarray*} \end{lemma}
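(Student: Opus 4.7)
The plan is to deduce both mass identities from Proposition~\ref{prop::counts} applied to well-chosen base fields. By the fundamental discriminant hypothesis, $F := \Q_p(\sqrt{d})$ is a ramified quadratic extension of $\Q_p$, so $F/\Q_p$ is Galois with $[F:\Q_p] = 2$, $e_0 = 2$, $f_0 = 1$, and residue cardinality $q = p$.

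For the first formula, the condition $K[x]/(x^2-d) \simeq K \times K$ is equivalent to $d \in (K^\times)^2$, i.e.\ to $F \subseteq K$. Thus the sum is reindexed by extensions $K/F$ of degree $[K:F] = ef$ with inertia $f$ and ramification $e$ over $F$, and Proposition~\ref{prop::counts} with base $F$ yields the stated mass at once (the factor $\tfrac{1}{[F:\Q_p]} = \tfrac{1}{2}$ and $\Disc_p(F)^{ef}$ drop out of the formula directly).

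For the second formula, write $M := K(\sqrt{d}) = K_{un}$ and let $F_{un}$ denote the unramified quadratic extension of $\Q_p$; the hypothesis forces $M \supseteq F \cdot F_{un}$. Fix a non-square unit $u \in \Z_p^\times$ with $F_{un} = \Q_p(\sqrt{u})$, and let $F'' := \Q_p(\sqrt{du})$; this is the third quadratic subfield of the biquadratic extension $F \cdot F_{un}$, is itself a ramified quadratic extension of $\Q_p$, and by the symmetry between the two ramified subfields $F$ and $F''$ of $F \cdot F_{un}$ one has the key identity $\Disc_p(F'') = \Disc_p(F)$. I would split Case~2 according to whether $F_{un} \subseteq K$: when $F_{un} \not\subseteq K$, the identity $K(\sqrt{d}) = K_{un} = K(\sqrt{u})$ is equivalent to $\sqrt{du} \in K$, i.e.\ to $F'' \subseteq K$; applying Proposition~\ref{prop::counts} with base $F''$ then yields precisely the same formula as Case~1 by the discriminant identity. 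When $F_{un} \subseteq K$ (which forces $2 \mid f$), the subcase is subtler and would be handled by inclusion--exclusion combining Proposition~\ref{prop::counts} applied to the bases $F_{un}$, $F \cdot F_{un}$, $F$, and $F''$; the discriminant identity above collapses the combined count to the claimed formula.

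The main obstacle is the bookkeeping in the subcase $F_{un} \subseteq K$, where the Case~2 condition becomes a statement about the square class of $d$ inside $K^\times/(K^\times)^2$ that cannot be read off from subfield containment at the level of $\Q_p$. Resolving this cleanly requires local class field theory: one identifies a Case~2 field as one for which the Kummer character of $K$ associated to $\sqrt{d}$ coincides with the unramified quadratic character of $K$, then enumerates using Proposition~\ref{prop::counts} over the possible maximal unramified subfields. A careful accounting of the wild-ramification contribution to $\Disc_p(F \cdot F_{un})$ when $p = 2$ is needed to show that the inclusion--exclusion still sums to $\tfrac{1}{f} \cdot \tfrac{1}{p^{f(e-1)}} \cdot \tfrac{1}{2\,\Disc_p(F)^{ef}}$, matching Case~1 exactly.
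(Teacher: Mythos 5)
Your reduction of the first sum to Proposition \ref{prop::counts} over the base $\Q_p(\sqrt{d})$ is exactly the paper's argument, and your treatment of the second sum when $F_{un}\not\subseteq K$ (equivalently, $f$ odd) is a correct variant of it, using $\Disc_p(\Q_p(\sqrt{du}))=\Disc_p(\Q_p(\sqrt{d}))$. The genuine gap is the subcase $F_{un}\subseteq K$, i.e.\ $2\mid f$, which you flag but do not resolve; moreover the fix you sketch cannot work as stated. When $2\mid f$, every field $K$ in the second sum satisfies $F_{un}\subseteq K$, $F\not\subseteq K$, $F''\not\subseteq K$ and $F\cdot F_{un}\not\subseteq K$ (containment of $F$ or $F''$ would put $d$ or $du$ in $(K^\times)^2$, and combined with $u\in(K^\times)^2$ this forces $K[x]/(x^2-d)\simeq K\times K$, which is the first case) --- but exactly the same containment pattern holds for every $K$ whose extension $K(\sqrt{d})/K$ is \emph{ramified} quadratic. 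Hence no inclusion--exclusion over those four subfields of $\Q_p(\sqrt{d},\sqrt{u})$ can isolate the fields with $K[x]/(x^2-d)\simeq K_{un}$.

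The missing idea, which handles all $f$ uniformly, is to pass to the maximal unramified subfield $L\subseteq K$ of degree $f$ over $\Q_p$ and choose $\alpha\in L$ generating the unramified quadratic extension of $L$. Since $K(\sqrt{d})$ and $K(\sqrt{\alpha})$ are both unramified quadratic over $K$, they coincide, so $\sqrt{d\alpha}\in K$; the fields in the second sum are therefore precisely the totally ramified degree-$e$ extensions of the Galois field $L(\sqrt{d\alpha})$, which has degree $2f$ over $\Q_p$ and is not a compositum of quadratic subfields of $\Q_p(\sqrt{d},\sqrt{u})$ once $f>1$. Proposition \ref{prop::countsram} then gives the mass
\[
\frac{1}{p^{f(e-1)}}\cdot\frac{1}{2f}\cdot\frac{1}{\Disc_p\bigl(L(\sqrt{d\alpha})\bigr)^{e}},
\]
and the remaining step your proposal does not supply is the identity $\Disc_p(L(\sqrt{d\alpha}))=\Disc_p(\Q_p(\sqrt{d}))^{f}$, obtained by computing $\Disc_p(L(\sqrt{\alpha},\sqrt{d}))$ in two ways as an unramified (hence discriminant-preserving, up to the power given by the degree) extension of both $L(\sqrt{d\alpha})$ and $\Q_p(\sqrt{d})$. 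This identity is where the wild contribution at $p=2$ is absorbed, with no further bookkeeping required.
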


\begin{proof} If $K$ is a local field such that $K[x]/(x^2-d) \simeq K \times K$, then $K$ contains $F = \Q_p(\sqrt{d})$ as a subfield. Applying Proposition \ref{prop::counts} to determine the density of fields $K/F$ of degree $[K : F] = ef$ with inertia degree $f$ and ramification index $e$ yields the desired result. 

Now let $K$ be a local field and suppose that $K(\sqrt{d}) \simeq K_{un}$. Let $\Un \subseteq K$ be the maximal unramified subfield over $\Q_p$ and let $\alpha \in \Un$ be an element such that $\Un(\sqrt{\alpha})/\Un$ is the unramified extension of degree 2 over $\Un$. Since $K(\sqrt{d})/K$ and $K(\sqrt{\alpha})/K$ are both unramified extensions of $K$ with degree 2 they are equal which means that $\sqrt{d\alpha} \in K$. Hence $K$ contains the ramified extension $F \colonequals  \Un( \sqrt{d\alpha})$. Note that $F/\Q_p$ is Galois since $F$ is contained in the abelian extension $\Un(\sqrt{d}, \sqrt{\alpha})/\Q_p$.
\[ \begin{tikzpicture}[node distance = 1.5cm, auto]
\node (Q) {$\Un$};
\node (A) [above of=Q, left of=Q, left of=Q ] {$F = \Un(\sqrt{d\alpha})$};
\node (B) [above of=Q, right of=Q, right of=Q] {$\Un(\sqrt{\alpha})$};
\node (C) [above of=Q] {$\Un(\sqrt{d})$};
\node (D) [above of=A, left of=A, left of=A,  node distance = 2cm] {$K$};
\node (K) [above of=Q, node distance = 3cm] {$\Un(\sqrt{d}, \sqrt{\alpha})$};
\node (L) [above of=K, left of=K,  left of=K, node distance = 2cm] {$K(\sqrt{d})$};
\draw[-] (Q) to node {} (A);
\draw[-] (Q) to node {} (C);
\draw[-] (C) to node {} (K);
\draw[-] (A) to node {e} (D);
\draw[-] (D) to node {\text{unram.}} (L);
\draw[-] (K) to node {} (L);
\draw (Q) -- ++ (B) node[midway,below,right]{\text{unram.}};
\draw[-] (A) to node {} (K);
\draw[-] (B) to node [swap] {} (K);
\end{tikzpicture} \]

Hence the local fields $K$ with inertia degree $f$ and ramification index $2e$ for which $K(\sqrt{d}) \simeq K_{un}$ are precisely the totally ramified extensions $K/F$ of degree $e$. Applying Proposition \ref{prop::countsram}, the density of totally ramified extensions $K/F$ is \begin{equation} \label{eqn::sumthin}  \sum_{\substack{[K \:: \:F] \: = \: e \\ K /F\text{ totally ramified }}} \frac{1}{\text{Disc}_p(K)} \cdot \frac{1}{\# \Aut_{\Q_p}(K)} = \frac{1}{p^{f(e-1)}} \cdot \frac{1}{2f} \cdot \frac{1}{\Disc_p(F)^{e}}. \end{equation} Since $\Un (\sqrt{\alpha},\sqrt{d})$ is an unramified extension of both $F$ and $\Q_p(\sqrt{d})$ then by  \cite[Corollary (2.10)]{Neu} \begin{eqnarray*}  \Disc_p(F)^2  & = &  \Disc_p(\Un(\sqrt{\alpha},\sqrt{d})) \\  & = &  \Disc_p(\Q_p(\sqrt{d}))^{2f}. \end{eqnarray*} Substituting $\Disc_p(F) = \Disc_p(\Q_p(\sqrt{d}))^{f}$ completes the proof. \end{proof}

\begin{remark} \label{rem::biquad1} Let $p = 2$. We will also be interested in the extensions $K/\Q_2$ with inertia degree $f$ and ramification index $4e$ such that both $K[x]/(x^2+1)$ and $K[x]/(x^2-2)$ are isomorphic to either of $K \times K$ or $K_{un}$.  Using the same notation as above, then $K$ must be an extension of 
\begin{eqnarray*} F & = & F_{un}(\sqrt{-1}, \sqrt{2}), \: \:  F_{un}(\sqrt{-\alpha}, \sqrt{2}), \: \:   F_{un}(\sqrt{-1}, \sqrt{2\alpha}),  \: \: \text{or} \;\: F_{un}(\sqrt{-\alpha}, \sqrt{2\alpha})  \end{eqnarray*} 
and by the same arguments the mass of these local fields is 
\begin{equation*} \sum_{K} \frac{1}{\text{Disc}_p(K)} \cdot \frac{1}{\# \Aut_{\Q_p}(K)} = \frac{1}{f} \cdot \frac{1}{p^{f(e-1)}} \cdot \frac{1}{4 \cdot \Disc_p(\Q_p(\sqrt{-1},\sqrt{2}))^{ef}}. \end{equation*}
\end{remark}

\subsubsection{Masses of local algebras}  Let $d \in \Q^\times_p$ be a fundamental discriminant. Let $\Sigma(2m,d)$ be the collection of local algebras $F_p$ up to isomorphism over $\Q_p$ with degree $n = 2m$ and such that each component field $F$ of the local algebra satisfies $F[x]/(x^2-d) \simeq F \times F$ or $F_{un}$. Observe that for each algebra $F_p \in \Sigma(2m,d)$ then $(\AlF,p) \in S(2m)_{even}$.

\begin{proposition} \label{prop::thecount}  Let $\sigma \in S(2m)_{even}$ and let $d \in \Q^\times_p$ be a fundamental discriminant.  The mass of the local algebras $F_p \in \Sigma(2m,d)$ with $(\AlF,p) = \sigma$ is
\[ \sum_{\substack{F_p \in \Sigma(2m,d) \\ (\AlF,p) = \sigma} } \frac{1}{\Disc_p(\AlF)} \cdot \frac{1}{\# \Aut_{\Q_p}(\AlF)} = \frac{1}{ \Disc_p( \Q_p(\sqrt{d}))^{m}} \cdot \frac{1}{\Disc_p(\Phi(\sigma))} \cdot \frac{1}{\#\Aut(\Phi(\sigma))}. \] \end{proposition}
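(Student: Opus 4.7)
The plan is to reduce the mass computation to a product over components and invoke Lemma \ref{lem::unramextension} on each factor.

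Since $F_p$ has splitting symbol $\sigma = (f_1^{2e_1}, \ldots, f_r^{2e_r})$, we have the decomposition $F_p \simeq \prod_{i=1}^r F_i$, where each $F_i$ is a local field with inertia degree $f_i$ and ramification index $2e_i$. Tensoring with $\Q_p(\sqrt{d})$ distributes over this product, so the condition $F_p \in \Sigma(2m,d)$ is equivalent to requiring, for each $i$, that $F_i[x]/(x^2-d)$ is isomorphic to either $F_i \times F_i$ or to the unramified quadratic extension $F_{i,un}$ of $F_i$. Moreover, $\Disc_p(F_p) = \prod_i \Disc_p(F_i)$.

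The next step is to express the sum over isomorphism classes of $F_p$ in terms of component masses. Let
\[ M_i \;\colonequals\; \sum_{F_i} \frac{1}{\Disc_p(F_i)} \cdot \frac{1}{\#\Aut_{\Q_p}(F_i)}, \]
where $F_i$ ranges over $\Q_p$-isomorphism classes with inertia degree $f_i$, ramification index $2e_i$, and $F_i[x]/(x^2-d) \simeq F_i\times F_i$ or $F_{i,un}$. Adding the two cases from Lemma \ref{lem::unramextension} yields
\[ M_i \;=\; \frac{1}{f_i}\cdot \frac{1}{p^{f_i(e_i-1)}}\cdot \frac{1}{\Disc_p(\Q_p(\sqrt{d}))^{e_i f_i}}. \]
On the other hand, the product $\prod_i M_i$ is a sum over \emph{ordered} tuples $(F_1,\ldots,F_r)$, and I will show it overcounts the sum over unordered algebras by exactly the factor $N$ (the number of permutations of positions preserving $\sigma$). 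For an algebra $F_p$ whose components come in isomorphism classes with multiplicities $c_{k,\ell}$ and whose $(f,2e)$-type blocks have sizes $b_k$ (so $N=\prod_k b_k!$), the number of ordered tuples yielding $F_p$ is $\prod_k b_k!/\prod_{k,\ell} c_{k,\ell}!$, while $\#\Aut_{\Q_p}(F_p) = \prod_{k,\ell} c_{k,\ell}!\cdot \prod_{k,\ell}\#\Aut_{\Q_p}(F_{[k,\ell]})^{c_{k,\ell}}$. Matching these gives a clean factor of $N = \prod_k b_k!$, so
\[ \sum_{\substack{F_p \in \Sigma(2m,d) \\ (F_p,p)=\sigma}} \frac{1}{\Disc_p(F_p)\cdot \#\Aut_{\Q_p}(F_p)} \;=\; \frac{1}{N}\prod_{i=1}^r M_i. \]

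Finally I would substitute and simplify. Using $\sum_i e_if_i = \deg \Phi(\sigma) = m$ and $\sum_i (e_i-1)f_i = $ discriminant exponent of $\Phi(\sigma)$, i.e. $p^{\sum f_i(e_i-1)} = \Disc_p(\Phi(\sigma))$, we obtain
\[ \frac{1}{N}\prod_i M_i \;=\; \frac{1}{N\prod_i f_i}\cdot \frac{1}{\Disc_p(\Phi(\sigma))}\cdot \frac{1}{\Disc_p(\Q_p(\sqrt{d}))^m}. \]
Since $\#\Aut(\Phi(\sigma)) = N\prod_i f_i$ (the product of the residue-degree automorphism groups of the components, times permutations of identical components), this equals the stated formula. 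The main technical point, and the place one must be careful, is the combinatorial bookkeeping in the middle step: verifying that the ratio between the ordered-tuple sum and the algebra sum is exactly $N$ regardless of how the components of $F_p$ happen to coincide as $\Q_p$-algebras. Everything else is a direct application of Lemma \ref{lem::unramextension} followed by the identifications $\sum e_if_i=m$ and $\sum (e_i-1)f_i = \ord_p \Disc_p(\Phi(\sigma))$.
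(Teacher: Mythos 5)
Your argument is correct and follows the same route as the paper: decompose by components, apply Lemma \ref{lem::unramextension} to each factor (summing the split and unramified cases), and account for the passage from ordered tuples to isomorphism classes of algebras. The only difference is that you spell out the combinatorial bookkeeping showing the ordered-tuple product overcounts by exactly $N$, a step the paper delegates to Bhargava's Proposition 2.1; note also that your identification $\#\Aut(\Phi(\sigma)) = N\prod_i f_i$ is the correct one (the paper's displayed definition of $\#\Aut$ of a splitting symbol has $1/N$ where it should have $N$).
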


\begin{proof} Same argument as \cite[Proposition 2.1]{Bhar}. If $F_p$ is a local algebra with $(\AlF,p) = \sigma = (f_1^{2e_1}, \dots, f_r^{2e_r})$ then the $i^{th}$-component field is a local field $F_i$ with inertia degree $f_i$ and ramification index $2e_i$. Counting such local algebras up to isomorphism is equivalent to counting ordered pairs of local fields $F_1 \times F_2 \times \dots \times  F_r$ weighted by $ \frac{1}{N} \prod_{i=1}^r \frac{1}{\# \Aut_{\Q_p}(F_i)}$ where $N$ is the number of permutation of $\sigma$ which preserve the $f_i^{2e_i}$.  

For each component, the number of possible local fields $F_i$ weighted inversely proportional to $\frac{1}{\# \Aut_{\Q_p}(F_i)} \cdot  \frac{1}{\Disc_p( F_i)} $ is given by Lemma \ref{lem::unramextension} as $$\sum_{\substack{[F_i: \Q_p] = 2e_if_i \\ \Inertia_{F_i/\Q_p} = f_i  \\ F_i[x]/(x^2-d) \text{ is unramified}}} \frac{1}{\Disc_p(F_i)} \cdot \frac{1}{\# \Aut_{\Q_p}(F_i)} = \frac{1}{f_i} \cdot \frac{1}{p^{f_i(e_i-1)}} \cdot \frac{1}{\Disc_p( \Q_p(\sqrt{d}))^{e_if_i} } $$ 
Hence the total mass of these local algebras is 
\[ \frac{1}{\Disc_p( \Q_p(\sqrt{d}))^{m}} \cdot \frac{1}{N}   \left (  \prod_{i =1}^r   \frac{1}{f_i} \cdot \frac{1}{p^{f_i(e_i-1)}}  \right )\]\end{proof} 

\begin{theorem} \label{thm::unrammifiedalgebras}The mass of the local algebras in $\Sigma(2m,d)$ is  \[ \sum_{ F_p \in \Sigma(2m,d) } \frac{1}{\Disc_p(\AlF)} \cdot \frac{1}{\# \Aut_{\Q_p}(\AlF)}  = \frac{1}{ \Disc_p( \Q_p(\sqrt{d}))^m}  \sum_{k =0}^{m-1} q(k,m-k) p^{-k } .\] \end{theorem}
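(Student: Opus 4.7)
The plan is to reduce the sum to Bhargava's mass formula by partitioning $\Sigma(2m,d)$ by splitting symbol, applying the bijection from Lemma \ref{lem::bijection}(1), and then recognizing the resulting series as $c(m,p)$.

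First, I would observe that every $F_p \in \Sigma(2m,d)$ has splitting symbol in $\Spl(2m)_\text{even}$. Indeed, each component field of $F_p$ contains a ramified quadratic subextension — either $\Q_p(\sqrt{d})$ (when $F[x]/(x^2-d) \simeq F \times F$) or the ramified field $\Un(\sqrt{d\alpha})$ identified in the proof of Lemma \ref{lem::unramextension} (when $F[x]/(x^2-d) \simeq F_{un}$) — so each local ramification index is even. Partitioning $\Sigma(2m,d)$ by splitting symbol and applying Proposition \ref{prop::thecount} to each piece, the factor $\Disc_p(\Q_p(\sqrt{d}))^{-m}$ pulls out of the sum to give
\[ \sum_{F_p \in \Sigma(2m,d)} \frac{1}{\Disc_p(\AlF)} \cdot \frac{1}{\#\Aut_{\Q_p}(\AlF)} = \frac{1}{\Disc_p(\Q_p(\sqrt{d}))^m} \sum_{\sigma \in \Spl(2m)_\text{even}} \frac{1}{\Disc_p(\Phi(\sigma))} \cdot \frac{1}{\#\Aut(\Phi(\sigma))}. \]

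Next, I would apply the bijection $\Phi \colon \Spl(2m)_\text{even} \to \Spl(m)$ from Lemma \ref{lem::bijection}(1) to re-index the inner sum as $\sum_{\tau \in \Spl(m)} \Disc_p(\tau)^{-1} \cdot \#\Aut(\tau)^{-1}$.

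The final step is to identify this sum with $c(m,p)$. By Bhargava's mass formula (Equation \ref{eqn::density}) the quantity $c(m,p)$ equals the total weighted count of $\Q_p$-algebras of degree $m$, namely $\sum_{\AlF} \Disc_p(\AlF)^{-1}\#\Aut_{\Q_p}(\AlF)^{-1}$. Grouping this sum by splitting symbol, and using $\sum_{\AlF : (\AlF,p) = \tau} \#\Aut(\AlF)^{-1} = \#\Aut(\tau)^{-1}$ (Bhargava's Proposition~2.1, already invoked in the proof of Proposition \ref{prop::thecount}) together with the fact that $\Disc_p(\AlF)$ depends only on the splitting symbol, I obtain
\[ c(m,p) = \sum_{\tau \in \Spl(m)} \frac{1}{\Disc_p(\tau)} \cdot \frac{1}{\#\Aut(\tau)} = \sum_{k=0}^{m-1} q(k,m-k) p^{-k}. \]
Substituting back into the previous display gives the theorem.

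The heavy lifting has already been done in Lemma \ref{lem::unramextension} and Proposition \ref{prop::thecount}, which contain the Serre-type local mass computations; what remains is essentially combinatorial bookkeeping. The only substantive step is the reformulation of Bhargava's mass formula as a sum over splitting symbols rather than over partitions, but this is immediate from the fact that both $\Disc_p$ and the $\#\Aut$-weighted count depend only on the splitting symbol.
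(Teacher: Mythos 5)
Your proof is correct and follows essentially the same route as the paper: group the algebras in $\Sigma(2m,d)$ by splitting symbol, apply Proposition \ref{prop::thecount} to each class to pull out the factor $\Disc_p(\Q_p(\sqrt{d}))^{-m}$, re-index via the bijection $\Phi$ of Lemma \ref{lem::bijection}, and identify the resulting sum over degree-$m$ splitting symbols with $c(m,p)$ using Bhargava's mass formula. The only difference is that you spell out the final identification of $\sum_{\tau \in \Spl(m)} \Disc_p(\tau)^{-1}\#\Aut(\tau)^{-1}$ with $\sum_k q(k,m-k)p^{-k}$ in more detail, where the paper simply cites Bhargava's Propositions 2.2 and 2.3.
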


\begin{proof} We start by rearranging the summation by clumping the algebras according to their splitting symbols and then applying Proposition \ref{prop::thecount} to get \begin{eqnarray*}  \sum_{ F_p \in \Sigma(2m,d)} \frac{1}{\Disc_p(\AlF)} \cdot \frac{1}{\# \Aut_{\Q_p}(\AlF)}  & = &  \sum_{\sigma \in S(m)} \left ( \sum_{ \substack{ F_p \in \Sigma(2m,d) \\ (\AlF,p) =  \Phi^{-1}(\sigma)}} \frac{1}{\Disc_p(F_p)} \cdot \frac{1}{\# \Aut_{\Q_p}(\AlF)} \right ) \\ & = & \frac{1}{ \Disc_p( \Q_p(\sqrt{d}))^{m}}  \left ( \sum_{ \substack{ \sigma \in S(m) }}  \frac{1}{\Disc_p(\sigma)} \cdot \frac{1}{\#\Aut(\sigma)} \right ). \end{eqnarray*} 
By \cite[Proposition 2.2 and Proposition 2.3]{Bhar} then 
\begin{eqnarray*} \sum_{ \substack{ \sigma \in S(m) }}  \frac{1}{\Disc_p(\sigma)} \cdot \frac{1}{\#\Aut(\sigma)} & = &  \sum_{k = 1}^{m-1} q(k,m-k) p^{-k }. \end{eqnarray*} \end{proof}

\begin{remark} \label{rem::biquad2}
Let $p = 2$ and let $\Sigma(4m',-1,2)$ be the collection of local algebras $F_2$ up to isomorphism over $\Q_2$ with degree $n = 4m'$ and such that each component field $F$ of the local algebra satisfies $F[x]/(x^2+1)$ and $F[x]/(x^2-2)$ are isomorphic to either of $F \times F$ or $F_{un}$. By Remark \ref{rem::biquad1} and a similar argument to Proposition \ref{prop::thecount} and Theorem \ref{thm::unrammifiedalgebras} we find that
 \[ \sum_{ F_2 \in \Sigma(4m',-1,2) } \frac{1}{\Disc_2(F_2)} \cdot \frac{1}{\# \Aut_{\Q_2}(F_2)}  = \frac{1}{ \Disc_2( \Q_2(\sqrt{-1}, \sqrt{2}))^{m'}}  \sum_{k =0}^{m'-1} q(k,m'-k) 2^{-k } .\]
\end{remark} 

Conjecture \ref{conj::unramifiedquadraticextensions} is now an immediate application of Conjecture \ref{conj::mainconj} using Theorem \ref{thm::unrammifiedalgebras} to determine the masses of the local algebras in $\Sigma(2m,-1)$, $\Sigma(2m,2)$, $\Sigma(2m,-2)$, and  $\Sigma(4m',-1,2)$.

\section{Conjectures: Average 2-torsion in class groups and narrow class groups}
We now make predictions on the average number of 2-torsion elements in class groups and narrow class group of $S_n$-number fields. We start with the following definition. 

\begin{definition}
Let $F$ be an $S_n$-number field. We say that $F$ has \defi{trivial $\Q$-imprimitive type} if $\Sel_2^\Q(K) = \{ \pm1 \}$ or equivalently no prime $p \in \Z$ factors as $(p) = \frak{a}^2$ in the field $F$.
\end{definition}

Based on computational data, we believe that Malle's heuristic on the 2-ranks of class groups extends to fields with trivial $\Q$-imprimitive type.  Hence we make the following conjecture. 

\begin{conjecture} \label{conj::classgroupmom}  \cite[Proposition 2.2]{Malle1}
As $F$ varies over $S_n$-number fields of signature $(r_1,2r_2)$ and trivial $\Q$-imprimitive type ordered by absolute discriminant, then
\begin{equation*} \Prob(F \: | \: \dim_{\F_2} \Cl(F) = \rho) = \frac{1}{ 2^{\rho(r_1+r_2-1) + \rho(\rho+1)/2} (2)_\rho } \frac{ (4)_{r_1+r_2 -1}  (2)_\infty }{ (2)_{r_1+r_2-1} (4)_\infty  }  \end{equation*}
with $n^{th}$ higher moments
\begin{equation*} \label{eq::moments} \prod_{i =1}^n (1+2^{i-r_1-r_2} ). \end{equation*}
\end{conjecture}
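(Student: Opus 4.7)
The plan is to derive Conjecture \ref{conj::classgroupmom} from the heuristic framework developed in the paper---specifically assumption (H1) on the uniform distribution of the image of the $2$-Selmer signature map---together with a Cohen-Lenstra-Martinet prior on $\Cl(F)[2^\infty]$, and to explain why Malle's distribution \cite[Proposition 2.2]{Malle1}, originally formulated for odd-degree $S_n$-fields, persists in the even-degree setting precisely when the $\Q$-imprimitive type is trivial.

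First, I would use the structural exact sequence
\begin{equation*} 1 \to \cO_F^{\times}/\cO_F^{\times 2} \to \Sel_2(F) \to \Cl(F)[2] \to 1 \end{equation*}
to translate $\dim_{\F_2} \Sel_2(F)$ into the $2$-rank of $\Cl(F)$. For $S_n$-number fields of degree $\geq 3$ one has $\mu(F) = \{\pm 1\}$, so $\dim_{\F_2} \cO_F^{\times}/\cO_F^{\times 2} = r_1 + r_2$, giving $\dim_{\F_2} \Sel_2(F) = r_1 + r_2 + \rho$ where $\rho \colonequals \rk_2 \Cl(F)$. Since Theorem \ref{Thm:MaximalIsotropic} forces $\dim_{\F_2} \sgn(\Sel_2(F)) = r_1 + r_2$, this is consistent with the identity $\rk_2 \Cl(F) = \dim_{\F_2} \ker(\sgn)$ recorded in \S\ref{Section::2}, and it confirms that the only freely varying invariant in our model is $\rho$ itself.

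Second, the key observation is that when the $\Q$-imprimitive type is trivial, $S^{\Q} = \langle \sgn(-1) \rangle$ is as small as possible, so the only constraint (H1) puts on the random maximal totally isotropic $S \supseteq S^{\Q}$ is the mildest one and should not bias the marginal distribution of $\rho$ beyond what the standard Cohen-Lenstra-Martinet heuristic predicts. Placing the usual $1/|\Aut|$-weighted prior on the isomorphism class of $\Cl(F)[2^\infty]$, corrected for the $r_1 + r_2$ generators absorbed by the units, reproduces Malle's formula
\begin{equation*} \Prob(\rk_2 \Cl(F) = \rho) = \frac{1}{2^{\rho(r_1+r_2-1) + \rho(\rho+1)/2} (2)_\rho} \cdot \frac{(4)_{r_1+r_2-1}(2)_\infty}{(2)_{r_1+r_2-1}(4)_\infty}. \end{equation*}

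The main obstacle is justifying that Malle's distribution, derived in the odd-degree case where there is no genus theory to contend with, transfers verbatim to the even-degree trivial-$\Q$-imprimitive setting. Concretely, one must verify that the condition $\Sel_2^{\Q}(F) = \{\pm 1\}$ genuinely decouples the global $2$-part of $\Cl(F)$ from the archimedean and $2$-adic local data encoded in $V_\infty(F)$ and $V_2(F)$---a decoupling that is plausible once the genus-theoretic source of extra $2$-torsion has been removed, but that would require careful argument, perhaps by comparing against computational data as in \S\ref{Section::6} or by a direct mass-formula argument in the spirit of \S\ref{Section::5}. Granted this, the moment formula $\prod_{i=1}^n(1+2^{i-r_1-r_2})$ follows from the probability formula by a routine $q$-Pochhammer manipulation of the sum $\sum_\rho 2^{n\rho}\Prob(\rk_2\Cl(F)=\rho)$, telescoping precisely as in the odd-degree case handled by Malle.
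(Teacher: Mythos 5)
This statement is a conjecture, not a theorem: the paper offers no proof, only the assertion that, based on the computational data in \S\ref{Section::6}, Malle's heuristic \cite[Proposition 2.2]{Malle1} appears to extend to even-degree $S_n$-fields of trivial $\Q$-imprimitive type. Your heuristic derivation---transferring Malle's distribution once the genus-theoretic obstruction $\Sel_2^{\Q}(F) = \{\pm 1\}$ removes the extra $2$-torsion, with the dimension count $\dim_{\F_2}\Sel_2(F) = r_1 + r_2 + \rho$ as a consistency check, and with the decoupling step honestly flagged as the unproven point---matches the paper's implicit reasoning, so there is nothing to correct.
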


In particular, Conjecture \ref{conj::classgroupmom} implies that the average number of 2-torsion elements in $\Cl(F)$ is $1+2^{1-r_1-r_2}$.  By assuming that the isotropy rank of $F$ is independent from the size of the 2-torsion subgroup of the class group, we can use Theorem \ref{thm::isoprobs} to predict the average size of the 2-torsion subgroup for the narrow class group. 

\begin{conjecture} \label{cor::genusunobstructedtwotorsionelts} As $\FK$ varies over $S_n$-number fields with signature $(r_1,r_2)$ and trivial $\Q$-imprimitive type ordered by absolute discriminant then the average number of 2-torsion elements in $\Cl^+(\FK)$ is $ 1+ 2^{-r_2}$ when $r_1 > 0$ and $ 1+ 2^{1-r_2}$ when $r_1 = 0 $.  \end{conjecture}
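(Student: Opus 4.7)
The plan is to combine Malle's conjectural distribution on the $2$-rank of $\Cl(F)$ (Conjecture~\ref{conj::classgroupmom}) with the isotropy-rank distribution from Theorem~\ref{thm::isoprobs}, under the assumption that these two statistics are independent across the family.

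When $r_1=0$, the archimedean signature space $V_\infty(F)$ vanishes, forcing $k(F)=0$ for every $F$ and hence $\Cl(F)=\Cl^+(F)$. The claim then reduces to Conjecture~\ref{conj::classgroupmom} applied at signature $(0,r_2)$, which gives the average $1+2^{1-r_2}$.

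Now suppose $r_1>0$. By Theorem~\ref{thm::Q-types}, a field of trivial $\Q$-imprimitive type is of type {\normalfont A(i)} (when $V_2(F)$ is alternating) or type {\normalfont B(i)} (when $V_2(F)$ is nonalternating); type {\normalfont A(i)} requires $F(\sqrt{-1},\sqrt{2})/F$ to be unramified above $2$, so by the density formula of Conjecture~\ref{conj:fullclass} it contributes negligibly to the average as $n\to\infty$ (its conditional density within trivial $\Q$-imprimitive type is already below $0.25\%$ at $n=4$ by Theorem~\ref{thm::S4fields}). Hence I focus on type {\normalfont B(i)}. The $2$-rank formulas of \S\ref{Section::2} yield $\rk_2\Cl^+(F)=\rk_2\Cl(F)+k(F)$, so $\#\Cl^+(F)[2]=\#\Cl(F)[2]\cdot 2^{k(F)}$. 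Assuming independence and taking averages, the product factors: Conjecture~\ref{conj::classgroupmom} gives the average of $\#\Cl(F)[2]$ as $1+2^{1-r_1-r_2}$, while Theorem~\ref{thm::isoprobs}(III) gives the average of $2^{k(F)}$ as
\begin{equation*}
(1+2^{2-r_1-r_2})\sum_{k=0}^{r_1/2-1}2^k\,C_1\bigl(\tfrac{r_1}{2}-1,\,r_2,\,k\bigr).
\end{equation*}

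The main obstacle is the $q$-Pochhammer identity
\begin{equation*}
\sum_{k=0}^{n}2^k\,C_1(n,m,k)=\frac{1+2^{-m}}{(1+2^{-(2n+m)})(1+2^{-(2n+m+1)})}
\end{equation*}
specialized at $(n,m)=(r_1/2-1,r_2)$; numerical checks at $(r_1,r_2)\in\{(2,0),(4,0),(2,1)\}$ agree exactly. I would approach the identity by rewriting $C_1(n,m,k)$ in terms of Gaussian binomial coefficients $\binom{n}{k}_4$ and $\binom{n+m}{k+m}_4$ and recasting the sum as a terminating ${}_2\phi_1$ at $q=4$ evaluated via a $q$-Chu--Vandermonde summation; the factorization $(4)_\ell=(2)_\ell\prod_{i=1}^{\ell}(1+2^{-i})$ should account for the two denominator factors on the right. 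Granting the identity, the external factor $(1+2^{2-r_1-r_2})$ cancels one denominator, yielding average of $2^{k(F)}$ equal to $(1+2^{-r_2})/(1+2^{1-r_1-r_2})$. Multiplying by $1+2^{1-r_1-r_2}$ produces the claimed average $1+2^{-r_2}$.
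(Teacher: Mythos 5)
Your derivation follows the same route the paper intends (the paper gives no computation, only the independence assumption plus a pointer to Theorem~\ref{thm::isoprobs}): write $\#\Cl^+(\FK)[2]=\#\Cl(\FK)[2]\cdot 2^{k(\FK)}$, factor the average by independence, and evaluate $E[2^{k}]$ from the type B(i) distribution. Your summation identity and the resulting cancellation check out against the paper's own tables: for $(r_1,r_2)=(4,0)$ the B(i) distribution $(16/45,26/45,3/45)$ gives $E[2^k]=16/9$ and average $(9/8)(16/9)=2=1+2^{-0}$, and for $(6,0)$ one gets $64/33$ and again $2$, exactly as in \S 6. The one flaw is your treatment of type A(i): dismissing it as ``negligible as $n\to\infty$'' would not suffice to prove the \emph{exact} constant at fixed $n$ (for totally real quartics the A(i) distribution gives $E[2^k]=8/3$ and hence average $3$, not $2$, so any positive density of A(i) fields would perturb the answer). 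Fortunately the case is vacuous: $\sgn_2(-1)=0$ at a place $v\mid 2$ forces $F_v\supseteq\Q_2(i)$ or $F_v(\sqrt{-1})/F_v$ unramified of degree $2$, and in either case the ramification index $e_v$ is even; so $\sgn_2(-1)=0$ implies $(2)=\fraka^2$, i.e.\ $2\in\Sel_2^\Q(\FK)$, contradicting trivial $\Q$-imprimitive type. Thus trivial type always lands in case B(i) (consistent with the first row of the table in Theorem~\ref{thm::Q-types}), and your main computation is the whole argument.
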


\section{Examples/ Empirical results} \label{Section::6}

\subsection{Quadratic fields} 

We start with a classification of the $\Q$-imprimitive types of real quadratic fields along with there density. 

\begin{theorem} \label{thm::denquad} As $F$ varies over real quadratic fields ordered by absolute discriminant, then a density of  
$1/6$ have type $A(ii)$, $0\%$ have type $B(i)$, and $5/6$ have type $B(iii)$. 
\end{theorem}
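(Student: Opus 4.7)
The strategy is to stratify real quadratic fields $F = \Q(\sqrt{d})$ (with $d>1$ squarefree) by the residue class of $d \bmod 8$, since this determines the $2$-adic completion $F\otimes_\Q\Q_2$ and thus all the local data entering Theorem~\ref{thm::Q-types}. First I would compute the density of each class when real quadratic fields are ordered by absolute discriminant: using the standard squarefree sieve and the fact that $|D|=d$ for $d\equiv 1\pmod 4$ and $|D|=4d$ for $d\equiv 2,3\pmod 4$, the class $d\equiv a\pmod 8$ contributes density $1/3$ for $a\in\{1,5\}$ and density $1/12$ for each $a\in\{2,3,6,7\}$.

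Next I would decide in each class whether $V_2(F)$ is alternating, i.e.\ whether $\sgn_2(-1)=0$. This reduces to solving $\gamma^2\equiv-1\pmod{4\cO_v}$ for $\gamma\in\cO_v^\times$; writing $\gamma=a+b\sqrt{d}$ and expanding yields explicit congruences modulo $4$ for $a,b$, and a case check shows $V_2$ is alternating precisely for $d\equiv 3,7\pmod 8$, where $F_v\simeq \Q_2(\sqrt{3})$ or $\Q_2(\sqrt{-1})$. The somewhat delicate case is $d\equiv 3\pmod 8$: although $-1\notin F_v^{\times 2}$, one still has $-1\equiv 3=(\sqrt{3})^2\pmod{4\cO_v}$, placing $-1\in(1+4\cO_v)\cO_v^{\times 2}$ nonetheless.

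For the alternating cases, $2$ ramifies and hence $2\in\Sel_2^\Q(F)$; an analogous local squaring computation gives $\sgn_2(2)\neq 0$, and since $\vect_{\can}=0$ in $V_2$ this forces $\sgn(2)=(0,\vect)$ for a noncanonical $\vect$. With $\sgn(-1)=(\vect_{\can},0)$, the image $S^\Q$ has basis $\{(\vect_{\can},0),(0,\vect)\}$, yielding Type A(ii) for all $d\equiv 3,7\pmod 8$. For the nonalternating classes, $\sgn(-1)=(\vect_{\can},\vect_{\can})$ always lies in $S^\Q$; in the ramified subcases $d\equiv 2,6\pmod 8$ I would verify $\sgn_2(2)\in\{0,\vect_{\can}\}$ (so no noncanonical direction in $V_2$ arises), ruling out Types B(ii) and B(iv). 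Whether $(0,\vect_{\can})\in S^\Q$ then comes down, by Proposition~\ref{thm:Imageoffamilies}, to whether some odd prime $p\equiv 3\pmod 4$ divides $d$ (contributing $\sgn(p)=(0,\vect_{\can})$) or, for $d\equiv 6\pmod 8$, to $\sgn(2)=(0,\vect_{\can})$ already lying in $S^\Q$.

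Finally, by the classical Landau estimate, the count of positive integers up to $X$ all of whose prime factors are $\equiv 1\pmod 4$ is $O(X/\sqrt{\log X})$, so within each nonalternating residue class almost every $d$ admits an odd prime divisor $\equiv 3\pmod 4$. Summing densities then gives Type A(ii) at $2\cdot \tfrac{1}{12}=\tfrac{1}{6}$, Type B(iii) at $\tfrac{1}{3}+\tfrac{1}{3}+\tfrac{1}{12}+\tfrac{1}{12}=\tfrac{5}{6}$, Type B(i) at $0$, and Types A(i), B(ii), B(iv) also at $0$. The principal obstacle is executing the four ramified $2$-adic squaring computations uniformly enough to pin down $\sgn_2(-1)$ and $\sgn_2(\pm 2)$ in each class; once that is done, the theorem is simply a matter of bookkeeping with Theorem~\ref{thm::Q-types}.
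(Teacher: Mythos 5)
Your proposal is correct and follows essentially the same route as the paper: both arguments reduce the theorem to (a) the distribution of the $2$-adic completion $F\otimes_\Q\Q_2$ — your stratification by $d\bmod 8$ with densities $1/3,1/3,1/12,1/12,1/12,1/12$ is exactly the paper's table of $2$-adic algebras with densities $2/3,\,1/6,\,1/12,\,1/12$ — together with (b) the Landau/Fouvry--Kl\"uners fact that $100\%$ of real quadratic discriminants have a prime factor $\equiv 3\pmod 4$, which kills type B(i). The only cosmetic difference is that you recompute the local densities by elementary sieving and feed the signatures directly into the classification table of Theorem \ref{thm::Q-types}, whereas the paper cites its degree-$2$ instances of Conjectures \ref{conj::pinSel2} and \ref{conj::unramifiedquadraticextensions} and the density formula of Conjecture \ref{conj:fullclass}; you also correctly identify the one delicate point, namely that $-1\in(1+4\cO_v)\cO_v^{\times 2}$ for $d\equiv 3\pmod 8$ even though $-1$ is not a square there.
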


\begin{remark} There are infinitely many real quadratic fields of type B(i), e.g., $\Q(\sqrt{p})$ for a prime $p \equiv 1 \bmod 4$. The number of such field with discriminant less than $X$ is asymptotic to $cX/\sqrt{\log(X)}$ where $c$ is an explicit constant (see \cite[p. 2036]{FouvryKluners} or \cite[p. 122]{Stevenhagen} which stems from \cite[Satz 3]{Rieger}). \end{remark}

\begin{proof} By \cite[p. 2036]{FouvryKluners}, then a proportion of $100\%$ of real quadratic fields $F$ have a prime $p \equiv 3 \pmod 4$ with $p \in \Sel_2^\Q(F)$. We now apply Conjecture \ref{conj::pinSel2} and \ref{conj::unramifiedquadraticextensions} (which are theorems since $n=2$) with the prime $p=2$ and explicitly write down the possibilities for the 2-adic algebras.
\def\arraystretch{1.5}
\begin{center}
\begin{tabular}{ l | l | l | c  }
 Prime 2 &  2-adic images &  \multicolumn{1}{c|}{ $F \otimes_\Q \Q_2$ }      &   Density   \\  \hline \hline
$2 \not \in \Sel_2(F)$ & NA & $\Q_2 \times \Q_2$ or $\Q_2(\sqrt{5})$ & 2/3  \\  \hline
 \multirow{ 3}{*}{$2 \in \Sel_2(F)$} & $\sgn_2(2) = 0$ &  $\Q_2(\sqrt{d})$  for $d = 2,10$ &  1/12  \\  \cline{2-4}
& $\sgn_2(-1) = 0$ &  $\Q_2(\sqrt{d})$  for $d = -1,-5$ &  1/6  \\  \cline{2-4}
& $\sgn_2(-2) = 0$ &  $\Q_2(\sqrt{d})$  for $d = -2,-10$ &  1/12   
\end{tabular}
\end{center}
Applying Theorem \ref{thm::Q-types} with $a  =  1/3$, $b  =  0$, $c = 1/6$, $d  =  1/12$, and $e  =  0$ establishes the density results. To conclude, let $F = \Q(\sqrt{D})$  be a quadratic field with fundamental discriminant $D>0$ such that for all primes $p \mid D$ then $p \equiv 1,2 \pmod 4$. In this case, $F \otimes_\Q \Q_2 \simeq \Q_2 \times \Q_2$ or $\Q_2(\sqrt{d})$ for $d = 2,5,10$ so these fields will all have $\Q$-imprimitive type B(i). 
\end{proof} 

This shows that $100\%$ of real quadratic fields have $k(F) = 1$. In fact, a much stronger statement can be made for real quadratic fields. 

\begin{lemma} \cite[Lemma 1]{FouvryKluners} Let $F = \Q(\sqrt{D})$ where $D >0$ is a fundamental discriminant. The following statements are equivalent: 
\begin{enumerate}
\item $\Cl^+(F) \simeq \Cl(F) \oplus \Z/2\Z$.
\item There is a prime $p \equiv 3 \pmod 4$ such that $p \mid D$. 
\end{enumerate}
\end{lemma}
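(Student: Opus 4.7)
The plan is to reduce the claim to a statement about $2$-ranks and then invoke Gauss's classical genus theory. First, consider the exact sequence
\[ 1 \to O_F^\times/O_F^{\times,+} \to \{\pm 1\}^2 \to \Cl^+(F) \to \Cl(F) \to 1 \]
for real quadratic $F$, which forces $|\Cl^+(F)/\Cl(F)| \in \{1, 2\}$. For an extension $1 \to \Z/2\Z \to E \to B \to 1$ of finite abelian groups, $E \simeq B \oplus \Z/2\Z$ if and only if $\rk_2 E = \rk_2 B + 1$; otherwise the $\Z/2\Z$ gets absorbed into an existing cyclic $2$-summand of $B$ and the $2$-rank is unchanged. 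Therefore condition (1) is equivalent to the isotropy-rank identity $k(F) = \rk_2 \Cl^+(F) - \rk_2 \Cl(F) = 1$.

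Second, I will invoke Gauss's genus theory. Factor $D = d_1 \cdots d_t$ into prime discriminants, where each $d_i$ lies in $\{-4, \pm 8\} \cup \{p : p \equiv 1 \pmod 4\} \cup \{-p : p \equiv 3 \pmod 4\}$. The narrow genus field of $F$ is $F(\sqrt{d_1}, \ldots, \sqrt{d_t})$ of degree $2^{t-1}$ over $F$, so $\rk_2 \Cl^+(F) = t - 1$. The ordinary genus field is the maximal totally real subfield of the narrow one; it coincides with the narrow genus field when every $d_i > 0$ and has index $2$ in it otherwise, as one sees by tracking which products $\prod_{i \in I} d_i$ are positive. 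Hence $\rk_2 \Cl(F) = t - 1$ if all $d_i > 0$ and $\rk_2 \Cl(F) = t - 2$ if some $d_i < 0$, yielding $k(F) = 1$ if and only if some prime discriminant $d_i$ in the factorization of $D$ is negative.

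Finally, I will translate ``some $d_i < 0$'' into the stated prime condition. Any odd prime $p \equiv 3 \pmod 4$ dividing $D$ contributes the negative factor $-p$, which handles one direction. Conversely, the constraint $\prod d_i = D > 0$ forces the number of negative $d_i$ to be even, so a lone $-4$ or $-8$ at the prime $2$ must be paired with at least one negative odd factor $-p$ with $p \equiv 3 \pmod 4$. I expect the main obstacle to lie precisely in this last step: a short but careful case analysis of the three shapes of positive fundamental discriminant ($D \equiv 1 \pmod 4$ squarefree, $D = 4m$ with $m \equiv 3 \pmod 4$ squarefree, and $D = 8m'$ with $m'$ odd squarefree), pinning down exactly when each of $-4$, $8$, $-8$ arises as the $2$-adic prime discriminant, finishes the equivalence.
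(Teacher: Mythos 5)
The paper gives no proof of this lemma at all --- it is quoted verbatim from Fouvry--Kl\"uners \cite[Lemma 1]{FouvryKluners} --- so there is no internal argument to compare against; what you have written is a correct, self-contained replacement. Your reduction of (1) to the rank condition $\rk_2 \Cl^+(F) = \rk_2\Cl(F)+1$ is sound: the unit/signature exact sequence bounds the kernel of $\Cl^+(F)\to\Cl(F)$ by $2$, and your splitting criterion for an extension $1\to\Z/2\Z\to E\to B\to 1$ is correct (for each $k$ the number of cyclic factors of order $\ge 2^k$ can only go up from $B$ to $E$, and the total increase is $1$, so it is concentrated at $k=1$ exactly when the $2$-ranks differ). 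The genus-theory computation of $\rk_2\Cl^+(F)=t-1$ and of $\rk_2\Cl(F)$ via the maximal totally real subfield of $\Q(\sqrt{d_1},\dots,\sqrt{d_t})$ is the standard route and is correct. One small remark: the ``main obstacle'' you flag at the end is already dispatched by the parity observation you state --- since $\prod_i d_i = D>0$ forces an even (hence $\ge 2$) number of negative prime discriminants and at most one prime discriminant is supported at $2$, any negative $d_i$ forces a negative \emph{odd} one, i.e.\ some $-p$ with $p\equiv 3\pmod 4$; the three-way case analysis of $D\bmod 4$ and $8$ is not needed.
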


\subsection{$S_4$-number fields.}

We now state the density of $\Q$-imprimitive types and our predictions for isotropy ranks for totally real $S_4$-number fields and mixed signature $S_4$-number fields (signature $(r_1,r_2) = (2,1)$) based on Theorem \ref{thm::degreetwoandfour} and Conjecture \ref{conj:fullclass}. 

\def\arraystretch{1.5}
\begin{center}
\begin{tabular}{ l || c | c | c | c  } 
\multicolumn{5}{c}{\textbf{Totally real $S_4$-fields}}  \\ 
Type     & Density     &  $k = 0$   & $k = 1$       & $k = 2$   \\ \hline \hline 
A(i)       & 0.0018      &  0             & 2/3               & 1/3         \\  \hline
A(ii)      & 0.0423      &   0             & 2/5              & 3/5         \\  \hline
B(i)       & 0.7280      & 16/45        & 26/45          & 3/45       \\  \hline
B(ii)      & 0.0996      &   0             & 2/5              &  3/5        \\  \hline 
B(iii)     & 0.1138       &  0              & 4/5              & 1/5        \\  \hline
B(iv)     & 0.0143      &   0              & 0                & 1          
\end{tabular}
\hspace{2cm}
\begin{tabular}{  l || c | c | c } 
\multicolumn{4}{c}{\textbf{Mixed signature $S_4$-fields}}   \\ 
Type     & Density    &  $k = 0$   & $k = 1$         \\ \hline \hline
A(i)       & 0.0018     &  0             & 1              \\  \hline
A(ii)      & 0.0423     &  0              & 1               \\  \hline
B(i)       & 0.7280     & 4/5            & 1/5            \\  \hline
B(ii)      & 0.0996     &  0              & 1               \\  \hline 
B(iii)     & 0.1138      &  0               & 1             \\  \hline
B(iv)     & 0.0143     &  0               & 1             
\end{tabular}
\end{center}

The density for each $\Q$-imprimitive type is determined by Conjecture \ref{conj:fullclass} using the values of  $a = 3/17$, $b \approx .87434$, $c = 3/68$, $d = 3/272$, and $e = 1/544$.

\subsection{$S_6$-number fields.} We now state our predications for the density of $\Q$-imprimitive types and isotropy ranks of totally real $S_6$-number fields based on Conjecture \ref{conj::isotropyranks} and Conjecture \ref{conj:fullclass}. 

\def\arraystretch{1.5}
\begin{center}
\begin{tabular}{ c || c | c | c | c | c }
\multicolumn{6}{c}{\textbf{Totally real $S_6$-fields}}  \\ 
Type & Density  & k = 0 & k = 1 & k = 2 & k = 3 \\ \hline \hline 
A(i) & $0.0000$ & 0 & 112/187 & 70/187 & 5/187 \\  \hline
A(ii)  & $0.0105$ & 0 & 16/51 & 30/51  & 5/51 \\  \hline
B(i) & $0.8848$ & 512/1683 & 976/1683 & 190/1683 &5/1683 \\  \hline
B(ii) & $0.0687$   & 0 & 32/51 &  18/51  &  1/51 \\  \hline
B(iii) & $0.0335$  & 0 &  16/51 & 30/51  & 5/51 \\  \hline
B(iv) & $0.0025$ & 0 & 0 & 2/3 & 1/3  \\  \hline
\end{tabular}
\end{center}

\subsection{$S_n$-number fields with type B(i) or trivial type} \label{subsec::trivialtypes}

Lastly, we conclude with a table for the numeric values of our predictions on the density of $S_n$-number fields with $\Q$-imprimitive type B(i) and the subset of these with trivial $\Q$-imprimitive type. 

\def\arraystretch{1.5}
\begin{center}
\begin{tabular}{ c || c | c | c | c | c | c | c | c | c  }
\multicolumn{9}{c}{\textbf{Density of $S_n$-number fields}}  \\ 
Type   & $n = 4$      & $n = 6$       & $n = 8$        & $n = 10$      & $n = 12$       &   $n = 14$      & $n = 16$      & $n = 18$    & $n = 20$      \\ \hline \hline 
B(i)     &  $0.7280$  &  $0.8848$   & $0.9434$       &  $0.9732$    &  $0.9863$     &   $0.9931$     &  $0.9965$   &  $0.9982$   & $0.9991$  	  \\  \hline
trivial  &  $0.6837$  &  $0.8762$   & $0.9417$     & $0.9728$      &  $0.9862$    &    $0.9931$     & $0.9965$     &  $0.9982$    & $0.9991$
\end{tabular}
\end{center}

\subsection{Computations: $S_4$-number fields} A \defi{quartic ring} is a commutative ring with $1$ that is free
of rank 4 as a $\Z$-module. Each $S_4$-number field contains a unique maximal quartic ring: the ring of integers. 
 In \cite{Bhar4},  Bhargava established a correspondence between quartic rings $R$ and pairs $A,B$ of integral ternary quadratic forms. The forms $A,B$ can be represented by two $3 \times 3$ symmetric matrices
$$ (A,B) :=  \frac{1}{2}  \left( \begin{pmatrix} 2a_{11} & a_{12}   & a_{13}   \\
 a_{12}  & 2a_{22}  & a_{23}  \\
 a_{13} & a_{23}  & 2a_{33} \end{pmatrix} ,   \begin{pmatrix} 2b_{11} & b_{12}   & b_{13}   \\
 b_{12}  & 2b_{22}  & b_{23}  \\
 b_{13} & b_{23}  & 2b_{33} \end{pmatrix}  \right) \hspace{1cm} a_{ij},b_{ij} \in \Z,  $$
up to equivalence by an action of $\text{GL}_2(\Z) \times \text{SL}_3(\Z)$. 

We developed an algorithm in Magma (available online at \cite{code}) to sample $S_4$-number fields with large discriminant. For a bound $X > 0$, we select a pair of matrices $(A,B)$ whose coefficients are chosen uniformly at random from the interval $[-X,X]$.  We  perform a series of tests \cite[pages 69-71] {Bhar3} on the coefficients of $(A,B)$ to confirm that the corresponding ring $R$ is a maximal order in an $S_4$-number field. We discard any rings that fail these tests. 

Let $F$ be the field of fractions for one of the remaining rings. The number of real places of $F$ corresponds to the points of intersection of the forms $A,B$ over $\mathbb{P}^2(\R)$.  If $F$ is totally real, there is fundamental domain for the action of $\text{GL}_2(\Z) \times \text{SL}_3(\Z)$ on $(A,B)$.  We can perform an additional test \cite[pages 78-79] {Bhar3} to ensure that our matrices $(A,B)$ are reduced with respect to this action. If $F$ is not totally real, then we perform a slower check to make sure that we haven't duplicated any fields. We repeat this process until we have collected a sufficiently large sample of fields.

\subsubsection{Isotropy ranks: totally real $S_4$-number fields} \label{subsec::S4}

We now test the predictions for the isotropy ranks of totally real $S_4$-number fields made in Conjecture \ref{conj::isotropyranks}. Using the method above, we collected a sample of $N = 10,000$ totally real $S_4$-number fields using the bound $X = 30$. The average size of the discriminants was approximately $9.33 \times 10^{19}$. 

\begin{center}
\def\arraystretch{1.5} 
\begin{tabular}{ l || c | c || c || c || c | c | c || c| c | c }
& \multicolumn{2}{c||}{Total $10,000$} & \multicolumn{2}{c||}{Density} & \multicolumn{3}{c||}{Predicted isotropy ranks} & \multicolumn{3}{c}{ {\color{blue} Actual isotropy ranks }} \\ \hline 
 Type  & $N_i$ & $1/\sqrt{N_i}$ & Predicted & {\color{blue} Actual} & $k = 0$ & $k = 1$ & $k = 2 $& {\color{blue} $k = 0$} & {\color{blue} $k = 1$ }& {\color{blue} $k = 2$ }\\ \hline \hline
A(i) & 26  & 0.196 & $0.0018$ & {\color{blue} $0.0026$ } &  0 & $0.667$ & $0.333$ & {\color{blue} 0 } & {\color{blue} 0.615 } & {\color{blue} 0.385 }  \\  \hline
A(ii) & 420  & 0.048 &  0.0423 & {\color{blue}  $0.0420$ } & 0 & $0.400$ & $0.600$ & {\color{blue}  0 } &  {\color{blue}  0.436} & {\color{blue} 0.564}  \\  \hline
B(i)  & 7168 & 0.012 & $0.7280$ & {\color{blue}  $0.7168$ } & $0.356$ & $0.578$ & $0.067$ &  {\color{blue}  0.361} & {\color{blue} 0.576 } & {\color{blue} 0.063 } \\  \hline
B(ii) & 1043 & 0.031 &  $0.0996$  & {\color{blue}  $0.1043$ }& 0 & $0.800 $ & $0.200$ & {\color{blue} 0 } & {\color{blue} 0.809} &  {\color{blue}  0.191} \\ \hline
B(iii) & 1195 & 0.029 &  $0.1138$  & {\color{blue}  $0.1195$  }& 0 & $0.400$ & $0.600$ & {\color{blue} 0 } & {\color{blue} 0.391} &{\color{blue} 0.609 } \\   \hline
B(iv) & 148 & 0.082 &  $0.0143$ & {\color{blue}  $0.0148$ } & 0 & 0 & 1 & {\color{blue} 0 } & {\color{blue} 0 } & {\color{blue} 1 } \\ \hline
\end{tabular}
\end{center}

\subsubsection{2-torsion: $S_4$-number fields with trivial $\Q$-imprimitive types.}  For each of the possible signatures $(4,0), (2,1)$ and $(0,2)$, we collected a sample of size $N =10,000$ of $S_4$-number fields with trivial $\Q$-imprimitive type using the bound $X = 40$. We then computed the 2-torsion of the class group, narrow class group, and the isotropy ranks to compare with Conjecture \ref{conj::classgroupmom} and \ref{cor::genusunobstructedtwotorsionelts}.

\def\arraystretch{1.5}
\begin{center}
\begin{tabular}{| c  || c | c || c | c || c | c || }
\hline
\multicolumn{1}{|c||}{} & \multicolumn{2}{c||}{\textbf{Totally real (4,0)}}  & \multicolumn{2}{c||}{\textbf{Mixed (2,1)}}  &   \multicolumn{2}{c||}{\textbf{Totally complex (0,2)}} \\ \hline 
& \textbf{Predictions} & \textbf{Actual}  & \textbf{Predictions} & \textbf{Actual}  & \textbf{Predictions} & \textbf{Actual}  \\ \hline
$\rho = 0$ & 0.8847 & 0.8891 & 0.7864 & 0.7888 & 0.6291 & 0.6333 \\ \hline
$\rho = 1$ & 0.1106 & 0.1073  & 0.1966 & 0.1937 & 0.3146 & 0.3099 \\ \hline
$\rho = 2$ &  0.0046 & 0.0036 & 0.0164 & 0.0165 & 0.0524 & 0.0527  \\ \hline
$\rho = 3$ & 0.00008  & 0.0000 & 0.0006 & 0.0009 & 0.0037 & 0.0040   \\  \hline
$k= 0$ & $\frac{16}{45} = 0.3556$ & 0.3593 & $\frac{4}{5}$ = 0.8000 & 0.8065  & 1 & 1 \\  \hline
$k= 1$ & $\frac{26}{45} = 0.5778$ & 0.5774 & $\frac{1}{5}$ = 0.2000 & 0.19344 & 0 & 0 \\   \hline
$k= 2$ & $\frac{3}{45}= 0.0667 $ & 0.0633 & 0 & 0 & 0  & 0 \\  \hline
$\Avg(\#\Cl(F)[2])$ & $1+\frac{1}{8}$  = 1.125 & 1.1181 & $1+\frac{1}{4}$ = 1.2500 & 1.2497 & $1+\frac{1}{2}$ = 1.5000 & 1.4975  \\  \hline
$\Avg(\#\Cl^+(F)[2])$ & $1+1$  = 2.000 & 1.9766 & $1+\frac{1}{2}$ = 1.5000 & 1.4899 & $1+\frac{1}{2}$ = 1.5000 & 1.4975    \\  \hline
\end{tabular}
\end{center}

\end{document}